\documentclass[a4paper,11pt]{article}

\usepackage[scale=0.76]{geometry}

\usepackage{libertine}

\usepackage{amsmath}
\usepackage{amssymb}
\usepackage{mathtools}
\usepackage{mathrsfs}
\usepackage{bbm}
\usepackage{amsthm}

\usepackage[parfill]{parskip}
\usepackage{graphicx}
\usepackage{epstopdf}
\usepackage{caption}
\usepackage{enumitem}
\usepackage{etoolbox}
\usepackage{calc}

\usepackage[maxnames=6, backend=bibtex, url=false, isbn=false, doi=false, style=alphabetic]{biblatex}
\renewbibmacro{in:}{}
\usepackage[affil-sl]{authblk}
\usepackage{hyperref}
\usepackage{orcidlink}

\newcommand{\pasymp}{%
  \mathrel{\overset{
    \mathmakebox[\widthof{$\scriptstyle\mathrm{whp}$}][c]{\mathbb P}
  }{\asymp}}%
}

\newcommand{\whpasymp}{%
  \mathrel{\overset{\mathrm{whp}}{\asymp}}%
}

\newtheorem{theorem}{Theorem}[section]
\newtheorem{corollary}[theorem]{Corollary}
\newtheorem{proposition}[theorem]{Proposition}
\newtheorem{lemma}[theorem]{Lemma}

\theoremstyle{definition}
\newtheorem{definition}[theorem]{Definition}
\AtBeginEnvironment{definition}{%
  \pushQED{\qed}%
}
\AtEndEnvironment{definition}{\popQED}

\newcommand{\cC}{\mathcal{C}}
\newcommand{\cE}{\mathcal{E}}
\newcommand{\cG}{\mathcal{G}}
\newcommand{\cL}{\mathcal{L}}
\newcommand{\cR}{\mathcal{R}}

\newcommand{\de}{\operatorname{d}}

\DeclareMathOperator{\e}{\mathbb{E}}
\DeclareMathOperator{\p}{\mathbb{P}}
\DeclareMathOperator{\var}{\mathbb{V}{\rm ar}}

\title{The voter model on the hyperbolic graph}
\author[1]{John Fernley \orcidlink{0000-0002-6635-4341}}
\author[2]{Christian Hirsch}
\affil[1]{CRiSM, Department of Statistics, University of Warwick,
United Kingdom\\\href{mailto:john.fernley@warwick.ac.uk}{\rm john.fernley@warwick.ac.uk}}
\affil[2]{Department of Mathematics, Aarhus University, Denmark\\\href{mailto:christian.hirsch@math.au.dk}{\rm christian.hirsch@math.au.dk}}
\date{\today}
  
\begin{document}
\maketitle

\begin{abstract}
We consider the voter model on the giant component of a hyperbolic random graph, which is a spatial scale-free network, in the sparse and linear-giant regime $\alpha\in(1/2,1)$. We find that the quenched expected consensus time has order $n^{2-1/\alpha}$, as the number of vertices $n\to\infty$, with probability arbitrarily close to one. This is generalised to the voter model where each vertex changes its opinion at rates $q(v)=\de(v)^\varphi$, where we also establish the consensus time orders for all $\varphi\geq 0$. These orders have 3 regimes, with a phase transition at $\varphi=2-2\alpha$. For the upper bounds, our main proof idea is to connect the meeting set to some fixed target vertex of appropriate height in the product chain electrical network, to make rigorous an argument of  \cite{durrett2010some}.
\end{abstract}

\section{Introduction and results}

The voter model is one of the basic interacting particle systems.  It originates in the model for spatial conflict of Clifford and Sudbury
\cite{clifford1973model} and the work of Holley and Liggett
\cite{holley1975ergodic}; see also \cite{liggett85}.  Each site carries an
opinion and, at random times, adopts the opinion of another site, usually a neighbour in a graph environment.  On a finite connected graph the process eventually reaches a constant configuration, and a fundamental question is how the geometry of the graph determines the time to consensus.

\begin{definition}[Voter model]
Let $V$ be finite and let $Q=(Q_{ij})_{i,j\in V}$ be an irreducible Markov generator, with $q(i):=-Q_{ii}$.  For $\xi\in\{0,1\}^V$ and $i,j\in V$, let $\xi^{i\leftarrow j}$ be obtained from $\xi$ by replacing the opinion at $i$ by that at $j$.  The voter model $(\xi_t)_{t\geq0}$ with copying kernel $Q$ is the Markov process on $\{0,1\}^V$ with generator
\begin{equation}
\cL f(\xi) = \sum_{i\neq j}Q_{ij} \left(f(\xi^{i\leftarrow j})-f(\xi)\right).             \label{eq:voter-generator}
\end{equation}
Thus site $i$ updates at rate $q(i)$ and, when it updates, copies $j$ with probability $Q_{ij}/q(i)$.
\end{definition}

In the graphical construction, an arrow from $j$ to $i$ is drawn at rate $Q_{ij}$ and the opinion at $i$ follows the arrow.  Tracing arrows backwards gives a Markov chain with generator $Q$; several ancestral lineages move as copies of this chain and coalesce when they meet.  Write $T_{\rm coal}$ for the coalescence time when initially there is one lineage at every site, and
\[
T_{\rm cons}:=\inf\{t\geq0:\xi_t\text{ is constant on }V\}.
\]
The graphical coupling gives $T_{\rm cons}\leq T_{\rm coal}$ for binary opinions $\xi_0\in\{0,1\}^V$.  In the voter model begun with a distinct opinion at every site, the consensus time exactly equals $T_{\rm coal}$.  If $\pi$ is stationary, so that $\pi Q=0$, then the proportion of opinion $1$
\[
p_1(\xi):=\sum_{i\in V}\pi(i)\xi(i)
\]
defines a martingale $(p_1(\xi_t))_{t\geq0}$, for more details see \cite[Section~3]{coxchenchoi}. Now, let $X,Y$ be independent copies of the dual chain, and define the standard meeting times
\[
\begin{split}
T_{\rm meet}&:=\inf\{t\geq0:X_t=Y_t\},\\
t_{\rm meet}^\pi&:= \e_{\pi\otimes\pi}\left( T_{\rm meet}
\right),\\
t_{\rm meet}&:= \max_{x,y} \e_{x,y}\left( T_{\rm meet}
\right).\\
\end{split}
\]

If we consider this a hitting time for the two-dimensional Markov chain $Z=(X,Y)$, the exit distribution of $Z$ from the meeting set $\Delta=\{(v,v):v\in V\}$ is the probability measure on $i\neq j$ given by
\[
\rho(i,j)\propto \pi(i)^2 Q_{ij}+ \pi(j)^2 Q_{ji} .
\]

\begin{definition}[Initial condition]
On a fixed graph $\cG$ and with $u \in (0,1)$, $\mu_u:= \bigotimes_{v\in V(\cG)}{\rm Ber}(u)$ is the product measure.
\end{definition}

In the following result we give high probability bounds (see Section \ref{sec_notation} for notation) for the expected coalescence time on the configuration model graph (a uniform random graph from the subset of graphs with a given fixed degree sequence).

\begin{theorem}[Folklore]
The voter model on a configuration model $\cG$ defined by degrees $\{d_1,\dots,d_n\}$ with $\min_i d_i\geq 3$, $\max_i d_i = n^{1-\Omega(1)}$ and $\sum_{i=1}^n d(i)=O(n)$ satisfies
\[
\e_{\mu_u}(T_{\rm cons}|\cG) \whpasymp \frac{n^2}{\sum_{i=1}^n \de(i)^{2}}.
\]
\end{theorem}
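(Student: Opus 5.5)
We reduce the consensus time to a meeting time of two random walks and evaluate that meeting time by a mean-field argument for fast-mixing chains. The voter model here is the standard one: $Q_{ij}=\mathbbm{1}\{i\sim j\}/\de(i)$, so $q\equiv1$ and $\pi(i)=\de(i)/\sum_j \de(j)$. The classical comparison for voter models on reversible chains (Oliveira's bound $t_{\rm coal}\asymp t_{\rm meet}$, together with the martingale $p_1(\xi_t)$ for the lower bound) gives, for fixed $u\in(0,1)$,
\[
\e_{\mu_u}(T_{\rm cons}\mid\cG)\asymp t_{\rm meet}^\pi .
\]
The upper bound comes from $T_{\rm cons}\le T_{\rm coal}$. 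For the lower bound, two lineages started from $\pi\otimes\pi$ fail to coalesce before time $c\,t^\pi_{\rm meet}$ with probability bounded below, and under $\mu_u$ their opinions then differ with probability $2u(1-u)$. So it suffices to show
\[
t_{\rm meet}^\pi\whpasymp \frac{1}{\sum_i\pi(i)^2}=\frac{(\sum_i \de(i))^2}{\sum_i \de(i)^2}\asymp \frac{n^2}{\sum_i\de(i)^2},
\]
where the last step uses $\sum_i\de(i)\asymp n$, which follows from $\min_i d_i\ge3$ and $\sum_i d(i)=O(n)$.

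\textbf{Step 1: fast mixing.} With $\min_i d_i\ge3$ the configuration model is whp connected and an expander, and the lazy walk has relative spectral gap bounded below, so $t_{\rm mix}=O(\log n)$. This is a standard whp statement for configuration models with bounded-below minimum degree $\ge3$ and $\max_i d_i=n^{1-\Omega(1)}$; I would cite it (Abdullah--Cooper--Frieze-type results) rather than reprove it.

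\textbf{Step 2: meeting time via the product chain.} Consider the product chain $Z=(X,Y)$ on $V^2$. It is reversible with stationary law $\pi\otimes\pi$, and its spectral gap is twice that of the single walk, so it still mixes in $O(\log n)$. The target is the meeting set $\Delta$, which has stationary mass $\pi(\Delta)=\sum_i\pi(i)^2$. I would use the Aldous--Fill / Aldous--Brown approximation: for a fast-mixing chain and a set $A$,
\[
\e_\pi(\tau_A)\approx \frac{t_{\rm mix}}{\pi(A)}\cdot\frac{1}{\text{expected local time}}.
\]
More concretely, $\e_\pi\tau_A = Z_{AA}/\pi(A)$ for the appropriate fundamental-matrix quantity. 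The contribution from mixing-scale returns is negligible, because $\pi(\Delta)\,t_{\rm mix}\to0$. This holds since $\sum_i\pi(i)^2\le\max_i\pi(i)=n^{-\Omega(1)}$ and $t_{\rm mix}=O(\log n)$.

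It remains to check that the expected number of returns to $\Delta$ in time $O(\log n)$, started from the exit distribution $\rho$, is $O(1)$. The lower bound $t^\pi_{\rm meet}\gtrsim1/\pi(\Delta)$ is immediate: the expected occupation time of $\Delta$ up to time $t$ is $t\pi(\Delta)$, and each visit to $\Delta$ lasts a constant expected time. For the upper bound I would use the renewal formula $\e_\pi\tau_\Delta=\e_\rho[\text{return time}]\cdot$ (ratio of occupation to visits), together with a local-tree-likeness estimate. Two walks that have just separated from a common vertex $v$ meet again within $O(\log n)$ steps with probability bounded away from $1$. The reason is that neighbourhoods of radius $\epsilon\log n$ are trees, up to at most one cycle whp, with all degrees $\ge3$, so the distance between the two walks drifts upward.

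\textbf{Main obstacle.} The main difficulty is this last estimate, uniformly over the starting vertex $v$. High-degree hubs and the few short cycles could in principle make the two walks re-collide repeatedly. I would handle this with the walk-on-tree drift argument, using the fact that the minimum degree $3$ gives branching at least $2$. Hubs only help escape. I would then conclude with $t^\pi_{\rm meet}\asymp1/\pi(\Delta)$ whp, which by the reduction above gives the stated order.
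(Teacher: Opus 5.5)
Your reduction to $t^\pi_{\rm meet}$ and your lower bound $t^\pi_{\rm meet}\gtrsim 1/\sum_i\pi(i)^2$ are sound. The lower bound comes from counting entrances to $\Delta$, and it is essentially the paper's lower bound via Cox--Chen--Choi.

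The gap is in the upper bound. You claim that balls of radius $\epsilon\log n$ are trees with at most one cycle, and this is false under these hypotheses. Take $n^{0.3}$ hubs of degree $n^{0.6}$ and give every other vertex degree $3$, so that $\sum_i d_i=O(n)$ and $\max_i d_i=n^{0.6}$. Two hubs receive about $n^{1.2}/(3n)\to\infty$ pairings in expectation. The hubs therefore span a dense subgraph (parallel edges in the multigraph, a near-clique in the simple version), and every hub lies on polynomially many short cycles. Worse, $\rho(i,j)\propto\de(i)+\de(j)$ on edges, so hub edges carry total $\rho$-weight $\asymp n^{1.5}$ against $O(n)$ for all other edges. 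The exit law thus sits almost entirely where the tree picture fails. This is exactly the regime $n^2/\sum_i d_i^2\asymp n^{1/2}\ll n$ in which the theorem has content. ``Hubs only help escape'' is therefore an unproved assertion carrying the whole upper bound. Even granting an escape estimate, your conversion to $t^\pi_{\rm meet}$ is not rigorous. The ``Aldous--Brown'' display is not a valid formula, and $\e_\pi\tau_a=Z_{aa}/\pi(a)$ holds for single states, not for the set $\Delta$. That conversion is the real content of an argument like Proposition~\ref{prop:poisson-clumping}.

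None of this is needed, because your Step~1 already gives $t_{\rm rel}=O(1)$, and that is all the paper uses. Durrett's Theorem~7.4.1 carries exactly a factor $t_{\rm rel}$, which the expander property makes $O(1)$. The product chain has the same gap as one walk, not twice it: its eigenvalues are $\lambda_i+\lambda_j$, and the smallest nonzero one is $\lambda_2+0$.

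Here is the direct bound. Put $h(z):=\e_zT_{\rm meet}$. Then $h=0$ on $\Delta$ and $-Lh=1$ off it, so the Dirichlet form is $\mathcal D(h,h)=\e_{\pi\otimes\pi}T_{\rm meet}$. With $D=\sum_i\pi(i)^2$, Cauchy--Schwarz on $\Delta^c$ gives $\var_{\pi\otimes\pi}(h)\geq(\e_{\pi\otimes\pi}T_{\rm meet})^2D/(1-D)$. The Poincar\'e inequality $\var_{\pi\otimes\pi}(h)\leq t_{\rm rel}\mathcal D(h,h)$ then yields $t^\pi_{\rm meet}\leq t_{\rm rel}(1-D)/D=O(n^2/\sum_i d_i^2)$.

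Finally, the coalescence comparison bounds $\e T_{\rm coal}$ by the worst-case $t_{\rm meet}$, not by $t^\pi_{\rm meet}$. You need to add $t_{\rm meet}\leq t^\pi_{\rm meet}+Ct_{\rm rel}\log(1/\pi_*)=t^\pi_{\rm meet}+O(\log n)$, as in Lemma~\ref{lem:coalescence-comparison}. This is negligible because $\sum_i d_i^2\leq\max_i d_i\sum_i d_i=n^{2-\Omega(1)}$.
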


\begin{proof}
The upper bound is precisely \cite[Theorem 7.4.1]{durrett2024dynamics} with a factor of the relaxation time $t_{\rm rel}$, for which we refer to \cite[Lemma 3.2]{gkantsidis2003conductance} who find that this graph is with high probability an expander, i.e. $t_{\rm rel}=O(1)$. The matching lower bound is  \cite[Equation 3.21]{coxchenchoi}.
\end{proof}

Note that the order of consensus on the configuration model is $(\sum\pi^2q)^{-1}$, as here for the simple random walk $\pi\propto d$ and $q\equiv1$. It might be thought that the consensus order agrees with the mean-field order because the underlying graph is an expander, but in this article we will see the same expression for a $1$-dimensional spatial graph with polynomially small spectral gap: there are different techniques for the three cases, but ultimately Theorem \ref{thm:consensus} is also the order of $(\sum\pi^2q)^{-1}$.

We use the following standard (Poissonised) version of the hyperbolic random graph with a fixed degree scaling parameter $\nu>0$, as opposed to the alternative where we condition on the Poisson process giving a fixed deterministic number of vertices. This parameter $\nu$ doesn't increase the vertex total, which is fixed at ${\rm Pois}(n)$, but the expected total number of edges grows linearly with $\nu$.

\begin{figure}
\includegraphics[width=\textwidth, keepaspectratio]{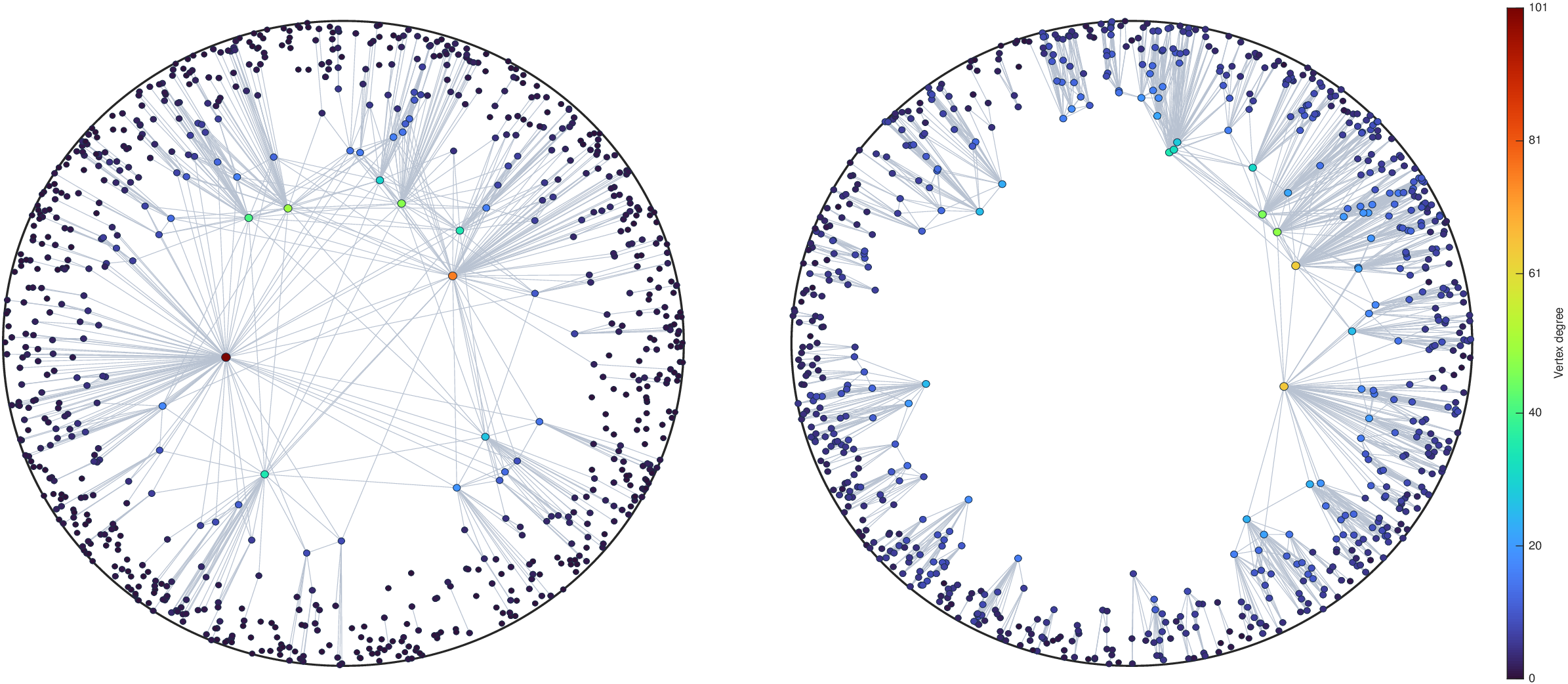} \caption{Two realisations of the hyperbolic random graph, both with $n=\e(|V|)=1000$ and $\nu$ such that $\e(|E|)=10,000$, but on the left we see $\alpha=9/16$ and on the right $\alpha=15/16$. Smaller $\alpha$, that is heavier tail degree distributions, have relatively more long--range edges. Colours denote vertex degree.}\label{fig_hyperbolic}
\end{figure}

\begin{definition}[Hyperbolic random graph]
Fix $\alpha\in(1/2,1)$ and $\nu>0$, and put $R:=2\log(n/\nu)$.  Let $\mathcal P_n$ be a Poisson point process on the hyperbolic disk $B_O(R)$ with intensity measure
\[
\lambda_n( \de r, \de\theta ) := \frac{n}{2\pi} \frac{\alpha\sinh(\alpha r)}{\cosh(\alpha R)-1} \mathbbm{1}_{\{0\leq r<R\}}\,\de r \de \theta, \qquad 0\leq\theta<2\pi.
\]

The hyperbolic random graph $G_{\alpha,\nu}(n)$ has vertex set $\mathcal P_n$, and two points are adjacent if and only if their hyperbolic distance is less than $R$.  Thus
\[
N_n:=|V(G_{\alpha,\nu}(n))|\sim{\rm Pois}(n).
\]

Throughout, $n$ denotes the mean vertex count rather than the realised value $N_n$.  This is the model of \cite[Section~2.3]{MR4816414}, with the density parameter encoded through $R$.
\end{definition}

By \cite[Theorem 5]{gugelmann2012random}, its largest degree is $n^{1/(2\alpha)+o(1)}$ with high probability.  Throughout, $\cG$ denotes its giant component.  The parameter region $\alpha\in(1/2,1)$ is the sparse scale--free regime in which a giant exists for every fixed $\nu>0$.

\begin{definition}%
The degree--dependent voter model is the voter model dual to the Markov chain
\begin{equation}
q(v)=\de(v)^\varphi, \qquad Q_{v,w}=\de(v)^{\varphi-1}\mathbbm{1}_{\{v\sim w\}},
                           \label{eq:voter-kernel}
\end{equation}
i.e. each vertex $v$ imitates each $w$ at rate $Q_{v,w}$, and so by iteration the opinion history has these dynamics after time reversal.
\end{definition}

For this parametrisation in \eqref{eq:voter-kernel}, introduced in
\cite{fernleyortgiese}, the stationary and post-meeting distributions
are
\[
\pi(i)\propto\de(i)^{1-\varphi}, \qquad \rho(i,j)\propto\mathbbm{1}_{i\sim j}\left( \de(i)^{1-\varphi} + \de(j)^{1-\varphi} \right).
\]

We can now state the main theorem: we introduce some notation for two meanings sometimes ascribed to $\Theta_{\p}$, bounds holding with high probability or \emph{in probability}, i.e. with probability close to $1$. Formal definitions are outlined in Section \ref{sec_notation}.

\begin{theorem}\label{thm:consensus}
Let $\alpha\in(1/2,1)$, $\nu>0$ and $\varphi\geq0$ be fixed.  On the giant component $\cG$ of $G_{\alpha,\nu}(n)$, start the degree--dependent voter model from $\mu_u$, where $u\in(0,1)$ is fixed. We find
\begin{align}
\text{if }\,0\leq\varphi<2-2\alpha \text{, then}\quad \e_{\mu_u}(T_{\rm cons}\mid\cG) &\pasymp n^{\,2-1/\alpha+\varphi/(2\alpha)} ,
  \label{eq_theorem_1}
\\[0.5em] \text{if }\,\varphi=2-2\alpha \text{, then}\quad \e_{\mu_u}(T_{\rm cons}\mid\cG) &\whpasymp \frac{n}{\log n} ,
  \label{eq_theorem_2}
\\
\text{and if }\,\varphi>2-2\alpha \text{, then}\quad \e_{\mu_u}(T_{\rm cons}\mid\cG) &\whpasymp n .
  \label{eq_theorem_3}
\end{align}
\end{theorem}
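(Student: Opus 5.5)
The plan is to reduce the consensus time to meeting times of two dual lineages, and then show that in every regime the answer matches the order of $(\sum_v \pi(v)^2 q(v))^{-1}$. On the hyperbolic graph the degree distribution has tail $\p(\de>k)\approx k^{-2\alpha}$, and the maximal degree is $n^{1/(2\alpha)+o(1)}$. Here $\pi(v)=\de(v)^{1-\varphi}/Z$ with $Z=\sum_v \de(v)^{1-\varphi}$, and $q=\de^\varphi$. So
\[
\sum_v\pi(v)^2q(v)=Z^{-2}\sum_v \de(v)^{2-\varphi}.
\]
Since $1-\varphi<2\alpha$, we have $Z\asymp n$. The sum $\sum_v \de(v)^{2-\varphi}$ behaves in three ways:
\begin{itemize}
\item it is dominated by the hubs and has order $n^{(2-\varphi)/(2\alpha)}$ when $2-\varphi>2\alpha$;
\item it has order $n\log n$ at equality;
\item it has order $n$ otherwise.
\end{itemize}
This gives exactly the three orders in \eqref{eq_theorem_1}--\eqref{eq_theorem_3}. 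The hub regime is only "in probability", since the top degrees fluctuate on the scale $n^{1/(2\alpha)}$ with a nondegenerate limit law. This explains why \eqref{eq_theorem_1} is stated with $\pasymp$ and the others with $\whpasymp$. The first step is therefore to establish these concentration statements for the degree sums on the giant $\cG$, using the known degree tail and giant size results for $G_{\alpha,\nu}(n)$.

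For the lower bound I would use the martingale $p_1(\xi_t)$ and the argument of \cite[Equation 3.21]{coxchenchoi}. The quadratic variation rate of $p_1$ is $\sum_{i\sim j}\pi(i)^2Q_{ij}(\xi(i)-\xi(j))^2\le \sum_i\pi(i)^2q(i)$. Because $p_1$ must move from about $u$ to $0$ or $1$, optional stopping gives
\[
\e_{\mu_u}(T_{\rm cons})\gtrsim u(1-u)\Big(\sum_i\pi(i)^2q(i)\Big)^{-1}.
\]
This holds deterministically given $\cG$, so the lower bounds follow immediately from the degree sum estimates.

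For the upper bound I would use $\e(T_{\rm cons})\le \e(T_{\rm coal})\lesssim t_{\rm meet}$ up to a logarithmic factor. To avoid that factor, I would instead use the bound $\e_{\mu_u}T_{\rm cons}\lesssim t_{\rm meet}^\pi$ plus mixing contributions, in the style of \cite{durrett2024dynamics}. The meeting time is then controlled through the electrical network of the product chain $Z=(X,Y)$, following the abstract's plan. We write $\e_{\pi\otimes\pi}T_\Delta\le \e_{\pi\otimes\pi}T_{(o,o)}$, where $o$ is a fixed target vertex of appropriate height, that is, a vertex of degree about $n^{1/(2\alpha)}$ in the hub regime or a bounded-degree typical vertex otherwise. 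The commute time identity bounds this hitting time by $\cR_{\rm eff}\big(\pi\otimes\pi\leftrightarrow(o,o)\big)$ times the total conductance.

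The key step, and the main obstacle, is to construct an explicit unit flow in the product network from the stationary measure to $(o,o)$ with small energy. Since the graph is one-dimensional and spatial, with spectral gap only polynomially small, expansion cannot be used. The flow must instead route each lineage radially inward to the hyperbolic core, where vertices of degree $n^{\Omega(1)}$ form a dense, nearly complete subgraph of diameter $O(\log\log n)$. Only then does it bring both lineages together at $o$. I would build the flow band by band in radius. Each vertex at radius $r$ sends its mass to the neighbours in the next band, of which there are about $e^{(R-r)/2}$. The energy of the level-$k$ layer is then summed, and this geometric sum is dominated by the core in the regime $\varphi<2-2\alpha$, produces $\log n$ at criticality, and is dominated by the boundary otherwise.

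A separate technical issue is the low-degree "dangling" parts of $\cG$ far from the core, which have only a single path inward. For these I would bound the extra resistance by the depth of such trees, which is $O(\log n)$, weighted by their polynomially small mass. Matching this energy to $(\sum\pi^2q)^{-1}$ in all three regimes, including the exact $\log n$ at $\varphi=2-2\alpha$, is where I expect most of the work to lie.
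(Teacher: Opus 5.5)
Your lower bound and your reading of the moment regimes are fine. The optional-stopping argument on $p_1(\xi_t)$ gives $\e_{\mu_u}(T_{\rm cons}\mid\cG)\ge u(1-u)\bigl(1-\sum_v\pi(v)^2\bigr)/\sum_vq(v)\pi(v)^2$. This is essentially the bound the paper obtains via duality and \cite[Equation~3.21]{coxchenchoi}. The $\pasymp$ in \eqref{eq_theorem_1} does come from the hub fluctuations.

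\textbf{The gap.} It lies in the upper bound, at the step $\e_{\pi\otimes\pi}T_\Delta\le\e_{\pi\otimes\pi}T_{(o,o)}$. Any flow with divergence $\mathbbm{1}_{(o,o)}-\pi\otimes\pi$ sends current $1-\pi(o)^2$ out of $(o,o)$. It does so through product edges of total conductance $2\pi(o)^2q(o)$, so Cauchy--Schwarz gives
\[
\e_{\pi\otimes\pi}T_{(o,o)}\geq\frac{(1-\pi(o)^2)^2}{2\pi(o)^2q(o)}.
\]
This matches $(\sum_vq(v)\pi(v)^2)^{-1}$ only if the single vertex $o$ carries a constant fraction of $\sum_v\de(v)^{2-\varphi}$. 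That happens only when $\varphi<2-2\alpha$. Concretely:
\begin{itemize}
\item For your bounded-degree $o$, the right-hand side is of order $n^2$, not $n$.
\item For $\varphi>2-2\alpha$, every choice of $o$ gives at least order $n^2/\max_v\de(v)^{2-\varphi}\gg n$.
\item At $\varphi=2-2\alpha$, every $o$ gives order $n$ (since $\max_v\de(v)^{2\alpha}\asymp n$ in probability), losing exactly the $\log n$.
\end{itemize}
So no flow into $(o,o)$, band-by-band or otherwise, can prove \eqref{eq_theorem_2} or \eqref{eq_theorem_3}. Your claim that the energy is ``dominated by the boundary'' already fails on the edges at $(o,o)$.

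\textbf{What is needed.} The target must be the whole shorted diagonal, so that the flow can leave from many diagonal states at once. The paper does this in two ways.
\begin{itemize}
\item For $\varphi>2-2\alpha$ it uses no flow. It takes the random-target time $t_\odot^{(\varphi)}\le(M_1/M_{1-\varphi})t_\odot^{(0)}=O(n)$ from \cite{MR4816414} and Aldous's bound $t_{\rm meet}=O(t_{\rm mix}+t_\odot)$ from \cite{aldous91}.
\item At criticality it superposes flows from the sources $v_j$, $(\log n)^2\le j\le n^\delta$, with harmonic weights $(jH_n)^{-1}$. This averaging over polynomially many meeting sites is what gains the factor $\log n$ (Proposition~\ref{prop:critical-aggregate-conductance}).
\end{itemize}

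\textbf{The hub regime.} Here your single-state reduction is viable, and would even bypass the paper's Poisson-clumping step and auxiliary target $w$. The alternating flow $\Theta_Q$ from $(s,s)$ to $\lambda_J^0\otimes\lambda_J^1$ has energy $O(M_{1-\varphi}^2/\de(s)^{2-\varphi})$. Continuing from there to $\pi\otimes\pi$ costs at most $t_{\rm rel}M_{1-\varphi}^2I_\varphi(\lambda_J^0)I_\varphi(\lambda_J^1)=\widetilde O(n^{2\alpha-1})$, which is negligible.

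However, your sketch omits what makes that energy small: the two coordinates must be moved alternately, scale by scale. The naive choice, lifting $g_J^0$ with the other coordinate frozen at $o$, costs $M_{1-\varphi}^2\de(o)^{-(1-\varphi)}/P_J=M_{1-\varphi}^2\de(o)^{-(1-\varphi)-\chi+o(1)}$. This exceeds the target order by $\de(o)^{1-\chi+o(1)}$, because $\chi=4\alpha(1-\alpha)<1$.

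\textbf{Passing to consensus.} Go from $t^\pi_{\rm meet}$ to consensus via \eqref{eq:worst-stationary-meeting} and \eqref{eq:kms-continuous}. The bound in \cite{durrett2024dynamics} carries a multiplicative $t_{\rm rel}$, which is polynomial here.
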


In the first regime,
\[
2-\frac{1}{\alpha}+\frac{\varphi}{2\alpha}<1
\]
and so the large degrees can speed up consensus to a sublinear order, but the influence of these large degrees is decreased as $\varphi$ increases. Increasing $\varphi$ cannot, however, make the quenched mean larger than order $n$.

The result could also be stated for the absorption time of the voter model from the Bernoulli measure $\mu_u$ on the full hyperbolic graph, using the result of \cite{MR4032854} that, with high probability, all other components are uniformly $O((\log n)^{1/(1-\alpha)})$.  This disconnected-graph result would then concern componentwise consensus rather than global consensus but the order would be the same.

In terms of the usual network parameter $\tau\in(2,3)$ for the tail decay of the empirical degree distribution, the orders in the theorem would translate to
\[
T_{\rm cons}\pasymp
\begin{cases}
n^{\,2-(2-\varphi)/(\tau-1)},
    & 2<\tau<3-\varphi,\\
n/\log n,
    & \tau=3-\varphi,\\
n, & 3-\varphi<\tau<3,
\end{cases}
\]
with all $3$ of these cases only being simultaneously possible when $\varphi\in(0,1)$. Note that removing the expectation and giving all three cases as orders in probability is an alternative presentation for Theorem \ref{thm:consensus}, but we find the expectation more informative.

The general message of this paper is perhaps that the heterogeneous mean-field heuristic of \cite{sood2008voter} continues to give the correct consensus order for the voter model on a scale-free network, even in a particular type of scale-free network model which is both slow-mixing and not locally treelike (see Figure \ref{fig_hyperbolic}). However, another slow-mixing instance of the voter model was very recently considered by \cite{koval2026meeting}, in that work $\varphi=0$ and they see in the case of a critical Erd\H{o}s--R\'enyi graph that a largest component of size $n^{2/3}$ with mixing time $n$ (see \cite{MR2435849}) in fact has consensus time of order $n$.

If we write $T_{\rm mix}$ for an optimal strong stationary time for the product chain, as provided by
\cite[Theorem~1]{fill1991time}, in general we have the easy coupling lower bound
\[
T_{\rm coal} \geq \max_v T_{\rm meet}^{\delta_v\otimes \pi} \geq T_{\rm mix}
\]
so the result of \cite{koval2026meeting} reveals that when the mixing time of the dual random walk is sufficiently slow it forces slow coalescence. This is related to the critical Erd\H{o}s--R\'enyi graph being globally tree-like, quite unlike the hyperbolic network model: for the hyperbolic model we still have $t_{\rm mix}\sum q \pi^2 \to 0$ and so the mean-field order is possible. Note however  \cite{koval2026meeting}  also demonstrate in the slightly-supercritical Erd\H{o}s--R\'enyi graph an example with $t_{\rm mix}\sum q \pi^2 \to 0$ for which the consensus order is not mean-field, so there is no straightforward general heuristic.

In this article, the proof relies centrally on Kac's formula \cite[Equation 2.24]{aldous-fill-2014} to get the stationary return time to the meeting set $\Delta=\{(v,v):v\in V\}$ of two independent walkers, which is in general a lower bound on the order of meeting time from stationarity. For $0\leq\varphi\leq2-2\alpha$, we consider the electrical network associated to the Markov chain of the two walkers, \emph{shorting} the diagonal set $\Delta$ by connecting each pair of states with a zero resistance edge. By putting a finite energy flow on this network from $\Delta$ to a target of appropriate height, we show that the random walkers can hit that target before they return to the meeting set. Then, by separately arguing that the target is hit after the mixing time, we show that stationary meeting is on the same order as the return time. This idea was previously used in \cite{MR4933840} on the dynamic graph but it is much easier to construct bounded energy flows on static graphs, particularly on hyperbolic graphs because we can adapt the construction of \cite{MR4816414} to the product Markov chain.

Instead for $\varphi>2-2\alpha$, the matching upper bound follows from the random-target time, or Kemeny's constant, for which we have a bound directly from \cite{MR4816414}. Then we can use Aldous's comparison, \cite[Proposition~2]{aldous91}, to bound the worst-case meeting time. Finally, the comparison of \cite{kanade23} passes from the meeting-time upper bounds to full coalescence, and hence to voter consensus.

\subsection{Asymptotic notation}\label{sec_notation}

All asymptotics are as $n\to\infty$.  Unless stated otherwise, ``with high probability'' refers to the randomness of the random graph under consideration and means with probability tending to one.  Implicit constants may depend on parameters declared fixed, but not on $n$.

For deterministic nonnegative sequences, the Landau notation $O$, $o$ and $\Omega$ have their usual meanings, and the relation $\sim$ is asymptotic equivalence i.e. ratio $1$.  We sometimes write $a_n\ll b_n$ and $a_n\gg b_n$ which are equivalent to $a_n=o(b_n)$ and $b_n=o(a_n)$, respectively. A tilde permits a fixed polylogarithmic loss: $\widetilde O(a_n)$ means $O(a_n(\log n)^C)$, and $\widetilde\Omega(a_n)$ means $\Omega(a_n/(\log n)^C)$, for some fixed $C$.

For a random sequence $X_n$ and a positive sequence $a_n$, which may itself be random, the statement $X_n=O(a_n)$ with high probability means that $\p(|X_n|\leq Ca_n)\to1$ for some fixed $C$; $X_n=o(a_n)$ with high probability means that this holds with $C$ replaced by some deterministic sequence tending to zero.  For nonnegative $X_n$, $X_n=\Omega(a_n)$ with high probability is defined analogously with a fixed positive lower constant.

Finally, we resolve some ambiguity with the notation $\Theta_{\p}$ by avoiding that notation and using the following for its two possible meanings. Given two nonnegative sequences $(X_n)_n$ and $(Y_n)_n$, for any $\epsilon>0$ there are constants $0<c<C<\infty$ such that for large $n$ we have:
\begin{table}[h!]
\centering \caption{The three types of $\asymp$ and their definitions.}
\begin{tabular}{c|l}
$X_n\asymp Y_n$ & $cY_n\leq X_n\leq CY_n$  \\[0.3em] \hline $X_n\whpasymp Y_n$ & $ \p(cY_n\leq X_n\leq CY_n)\longrightarrow1$ \\[0.3em] \hline $X_n\pasymp Y_n$ & $ \p(c Y_n\leq X_n\leq C Y_n) \geq1-\varepsilon$
\end{tabular}
\end{table}

\subsection{The voter model}

The following positive-escape criterion is the form of Aldous's ``Poisson clumping heuristic'' which is the centre of the proof in this article. This heuristic was applied in \cite[Equation 15]{durrett2010some} to the voter model with the assumption of a positive escape probability (condition~\ref{cond:clumping-escape} below). We will later be able to verify this condition for our model.

\begin{proposition}[Poisson clumping]\label{prop:poisson-clumping}
For a sequence of dual chains, suppose that for deterministic $t=t_n$,
\begin{enumerate}[label=\normalfont(\roman*)]
  \item\label{cond:clumping-collision} $\sum_v\pi(v)^2\to0$;
  \item\label{cond:clumping-window}
$t_{\rm mix}\ll t\ll(\sum_vq(v)\pi(v)^2)^{-1}$;
  \item\label{cond:clumping-escape}
$\p_\rho(T_{\rm mix}<t<T_{\rm meet})=\Omega(1)$.
\end{enumerate}
Then
\[
t_{\rm meet}^\pi\asymp \frac{1}{\sum_vq(v)\pi(v)^2}.
\]
\end{proposition}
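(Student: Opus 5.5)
We work with the product chain $Z=(X,Y)$ on $V\times V$. Its generator is $Q\otimes I+I\otimes Q$ and its stationary law is $\pi\otimes\pi$. The proof has three steps: compute the stationary return rate to $\Delta$ by Kac's formula, get a lower bound, and then use the escape condition to bound the clump size and get a matching upper bound.

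\emph{Kac's formula.} The stationary flux of $Z$ into $\Delta$ is
\[
\lambda:=\sum_{i\neq j}\pi(i)\pi(j)\bigl(Q_{ij}+Q_{ji}\bigr)=2\sum_{v}\pi(v)^2 q(v),
\]
using $\sum_{i\ne v}\pi(i)Q_{iv}=\pi(v)q(v)$ from $\pi Q=0$. Kac's formula gives $\e_\rho(\text{return time to }\Delta)=(1-\pi\otimes\pi(\Delta))/\lambda$, with $\pi\otimes\pi(\Delta)=\sum_v\pi(v)^2\to0$ by condition~\ref{cond:clumping-collision}. Write $\tau$ for the return time from $\rho$, so $\tau$ has the law of $T_{\rm meet}$ under $\p_\rho$, and $\e_\rho \tau\sim 1/(2\sum q\pi^2)$.

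\emph{Lower bound.} In stationarity, the number of entrances of $Z$ into $\Delta$ during $[0,s]$ has mean $\lambda s$. Hence $\p_{\pi\otimes\pi}(T_{\rm meet}\le s)\le \pi\otimes\pi(\Delta)+\lambda s$. Choosing $s=c/\lambda$ with small $c$ makes this at most $1/2$ for large $n$. Then Markov's inequality in reverse gives $t^\pi_{\rm meet}\ge s/2\asymp(\sum q\pi^2)^{-1}$. This direction needs only condition~\ref{cond:clumping-collision}.

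\emph{Upper bound (main obstacle).} The aim is to show that starting from $\pi\otimes\pi$, the meeting time is at most of order $1/\lambda$. A renewal argument will do it.

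\begin{enumerate}[label=\normalfont(\alph*)]
\item Run the stationary process on a window $[0,L/\lambda]$, where $L$ is a large constant. The expected number of entrances into $\Delta$ is $L$.

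\item Each entrance leaves $\Delta$ with exit law $\rho$. By condition~\ref{cond:clumping-escape}, with probability $p=\Omega(1)$ the pair then reaches $T_{\rm mix}$ before time $t$ and before meeting. On that event the subsequent state is exactly $\pi\otimes\pi$ by the strong stationary time property.

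\item Call an entrance \emph{good} if this escape event follows it. Good entrances are separated by at least a fresh strong stationary time. Between good entrances the process restarts from stationarity, so the good entrances form a regenerative sequence with i.i.d.\ cycles.

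\item Each cycle consists of a stationary meeting time followed by excursions. The expected number of entrances per cycle is at most $1/p$, and the expected time consumed by failed excursions is at most $t/p$ per cycle.

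\item By renewal reward (ergodic averaging of the entrance rate $\lambda$), the mean cycle length $\e_{\pi\otimes\pi}T_{\rm meet}+O(t)$ is at most $\e(\text{entrances per cycle})/\lambda\le 1/(p\lambda)$.

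\item The second part of condition~\ref{cond:clumping-window}, $t\ll 1/\lambda$, absorbs the $O(t)$ correction. This yields $t^\pi_{\rm meet}\le C/\lambda$.
\end{enumerate}

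The delicate point is the regeneration claim. One must check that after a good excursion the process is genuinely at law $\pi\otimes\pi$, independent of the past. This is exactly why condition~\ref{cond:clumping-escape} uses the strong stationary time $T_{\rm mix}$ rather than merely $t_{\rm mix}$. One must also check that failed excursions do not bias the renewal count. The first part of condition~\ref{cond:clumping-window}, $t\gg t_{\rm mix}$, ensures that the event in condition~\ref{cond:clumping-escape} is not vacuous.
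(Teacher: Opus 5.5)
Your Kac computation and your lower bound are correct; the lower bound is an elementary substitute for the Cox--Chen--Choi bound the paper quotes. The upper bound, however, has a genuine gap at the regeneration step (b)--(c).

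\emph{Regeneration fails.} A strong stationary time only guarantees that $Z_{T_{\rm mix}}\sim\pi\otimes\pi$ \emph{independently of $T_{\rm mix}$}. It gives no independence from the path before $T_{\rm mix}$, and in particular none from the event $\{T_{\rm mix}<T_{\rm meet}\}$ on which you condition. Indeed, on that event $Z_{T_{\rm mix}}\notin\Delta$ surely, while $\pi\otimes\pi(\Delta)=\sum_v\pi(v)^2>0$. More generally, the conditional law is tilted away from the diagonal and depends on the exit state. So ``on that event the subsequent state is exactly $\pi\otimes\pi$'' is false. The good entrances therefore do not cut the path into i.i.d.\ cycles each starting with a stationary meeting time, and the renewal--reward identity in (e) has nothing to act on.

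\emph{A second, independent error in (b).} Individual entrances do not exit with law $\rho$. The law $\rho$ is only the stationary (Palm) average of the exit laws: an entrance at $(v,v)$ exits with a $v$-dependent law. Moreover, the diagonal point first hit from $\pi\otimes\pi$, or after a failed excursion, is not $\rho$-distributed. Condition~(iii) thus gives no per-entrance success probability $p$, and the geometric bound ``at most $1/p$ entrances per cycle'' is unjustified. In (d), note also that a failed excursion with $T_{\rm mix}\geq t$ can last far longer than $t$ before the next meeting.

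\emph{How the paper closes the gap.} The paper never conditions on non-meeting and never argues entrance by entrance.
\begin{enumerate}[label=\normalfont(\arabic*)]
\item It uses the strong stationary time only unconditionally. After $T_{\rm mix}$ the chain is stationary, so it visits $\Delta$ during $[t,2t]$ with probability at most $\sum_v\pi(v)^2+\lambda t$. This gives $\p_\rho(T_{\rm meet}>2t)=\Omega(1)$.
\item The exact last-exit formula $\p_{\pi\otimes\pi}(T_{\rm meet}>t)-\p_{\pi\otimes\pi}(T_{\rm meet}>2t)=\lambda\int_t^{2t}\p_\rho(T_{\rm meet}>s)\,\de s$ then performs the $\rho$-averaging exactly.
\item Approximate exponentiality of stationary hitting times (Aldous--Fill, Proposition~3.23) converts the resulting $\Omega(\lambda t)$ increment into $t^\pi_{\rm meet}=O(1/\lambda)$. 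This applies because $t_{\rm rel}\ll t\ll\e_{\pi\otimes\pi}T_{\rm meet}$, using your lower bound.
\end{enumerate}

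\emph{Salvaging your idea.} Your ``separated good entrances'' idea can be rescued in Palm form, with no regeneration at all.
\begin{enumerate}[label=\normalfont(\arabic*)]
\item Call an exit from $\Delta$ good if no re-entrance occurs in the next $t$ time units.
\item Under $\p_{\pi\otimes\pi}$, the expected number of good exits in $[0,t]$ is exactly $\lambda t\,\p_\rho(T_{\rm meet}>t)$, by the compensator of the exit process.
\item There is at most one good exit in $[0,t]$, so $\p_{\pi\otimes\pi}(T_{\rm meet}\leq t)\geq\lambda t\,\p_\rho(T_{\rm meet}>t)=\Omega(\lambda t)$.
\item Restart in blocks of length $t+O(t_{\rm mix})$, using reversibility to control separation uniformly over starting states. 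This gives $t^\pi_{\rm meet}\leq t_{\rm meet}=O(1/\lambda)$, since $t_{\rm mix}\ll t$.
\end{enumerate}
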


\begin{proof}
For this proof put
\[
A:=\sum_vq(v)\pi(v)^2, \qquad D:=\sum_v\pi(v)^2.
\]

Kac's formula \cite[Equation~2.24]{aldous-fill-2014} gives the exact normalisation
\begin{equation}
\e_\rho T_{\rm meet}=\frac{1-D}{2A},                              \label{eq:excursion-mean}
\end{equation}
while \cite[Equation~3.21]{coxchenchoi} gives
\begin{equation}
\e_{\pi\otimes\pi}T_{\rm meet}\geq\frac{(1-D)^2}{4A}.           \label{eq:mean-field-lower}
\end{equation}

On $\{T_{\rm mix}<t\}$ the chain after $T_{\rm mix}$ is stationary. Under stationarity, the probability of visiting the diagonal during $[t,2t]$ is at most $D+2At$: the first term covers being on the diagonal at time $t$, and $2A$ is the stationary entrance flux.  Hence
\begin{equation}
\p_\rho(T_{\rm meet}>2t) \geq\p_\rho(T_{\rm mix}<t<T_{\rm meet})-D-2At =\Omega(1).                                                     \label{eq:two-block-survival}
\end{equation}

Here the last equality uses conditions~\ref{cond:clumping-collision}--\ref{cond:clumping-escape}.

The product chain is reversible and has relaxation time of the same order as the one-particle chain.  By the stationary hitting-time approximation
\cite[Proposition~3.23]{aldous-fill-2014}, applied to its diagonal,
\[
\p_{\pi\otimes\pi}(T_{\rm meet}>t) -\p_{\pi\otimes\pi}(T_{\rm meet}>2t) \sim\frac{t}{\e_{\pi\otimes\pi}T_{\rm meet}},
\]
because $t_{\rm rel}\ll t\ll\e_{\pi\otimes\pi}T_{\rm meet}$. On the other hand, the last-exit formula of
\cite[Corollary~3.4]{coxchenchoi} and
\eqref{eq:two-block-survival} give
\[
\p_{\pi\otimes\pi}(T_{\rm meet}>t) -\p_{\pi\otimes\pi}(T_{\rm meet}>2t) =2A\int_t^{2t}\p_\rho(T_{\rm meet}>s)\,\de s \geq2At\p_\rho(T_{\rm meet}>2t)=\Omega(At).
\]

Thus $\e_{\pi\otimes\pi}T_{\rm meet}=O(A^{-1})$, which together with
\eqref{eq:mean-field-lower} proves the claim.
\end{proof}

With this definition of a fast-mixing chain (that the return probability and the stationary hitting probability have the same order), we thus have the stationary meeting time. From there, it is well known we can also say that the full expected coalescence time of $N$ walkers has the same order, as in the following.

\begin{lemma}[Coalescence comparison]                              \label{lem:coalescence-comparison}
There is a universal constant $C$ such that, for every finite reversible continuous-time chain on $N\geq2$ states, with $\pi_*:=\min_v\pi(v)$,
\begin{align}
t_{\rm meet}&\leq t_{\rm meet}^\pi
   +C t_{\rm rel}(1+\log\pi_*^{-1}),                              \label{eq:worst-stationary-meeting}\\
\e T_{\rm coal}&\leq C t_{\rm meet} \left(1+\sqrt{t_{\rm mix}/t_{\rm meet}}\log N\right).           \label{eq:kms-continuous}
\end{align}
\end{lemma}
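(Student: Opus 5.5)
Both inequalities are standard, so the plan is to assemble them from known results instead of proving them from scratch.

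For \eqref{eq:worst-stationary-meeting}, we run two independent copies $X,Y$ from an arbitrary pair $(x,y)$. The product chain $Z=(X,Y)$ is reversible with stationary law $\pi\otimes\pi$, and its relaxation time equals that of the one-particle chain, since the product generator has spectral gap equal to the one-particle gap. We let the pair run for a deterministic time $s=Ct_{\rm rel}(1+\log\pi_*^{-1})$. Because $\min\pi\otimes\pi=\pi_*^2$, the standard $L^2$ bound
\[
d(s)\le \pi_*^{-1}e^{-s/t_{\rm rel}}
\]
applied to the product chain makes the law of $Z_s$ within total variation $e^{-1}$ of $\pi\otimes\pi$ for a suitable constant $C$. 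Since total variation does not directly control an expectation, we instead use the strong stationary time $T_{\rm mix}$ of \cite[Theorem~1]{fill1991time}, whose separation distance is bounded by $\pi_*^{-1}e^{-s/t_{\rm rel}}$ up to a squared-$\pi_*$ factor, still absorbed into $\log\pi_*^{-1}$. By the strong Markov property at $T_{\rm mix}$,
\[
\e_{x,y}T_{\rm meet}\le \e T_{\rm mix}+t_{\rm meet}^\pi .
\]
Alternatively, we can work in blocks of length $s$ and compare with $\e_{\pi\otimes\pi}$, with a geometric number of restarts. The target estimate is $\e T_{\rm mix}=O(t_{\rm rel}(1+\log\pi_*^{-1}))$, which follows from the separation decay by summing the tail over blocks.

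For \eqref{eq:kms-continuous}, we cite the coalescence bound of \cite{kanade23}, which states exactly that $\e T_{\rm coal}=O\bigl(t_{\rm meet}(1+\sqrt{t_{\rm mix}/t_{\rm meet}}\log N)\bigr)$ for reversible chains. If the statement there is for discrete-time lazy chains, we transfer it to continuous time by uniformisation. To do so we rescale by the maximal rate $\max_v q(v)$, which rescales all three of $t_{\rm meet}$, $t_{\rm mix}$ and $T_{\rm coal}$ by the same factor, so the ratio $t_{\rm mix}/t_{\rm meet}$ is unchanged. We then check that laziness changes each of these quantities only by constant factors.

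The main obstacle is this continuous-time transfer. Uniformisation is harmless for mixing, but meeting of two independently clocked walkers differs from the discrete-time simultaneous-move meeting of \cite{kanade23}. Two continuous-time walkers can only swap positions through one move at a time, so they cannot cross without meeting. I would therefore first look for a continuous-time or lazy version of their result, where this issue disappears. Failing that, I would redo their dyadic-halving argument directly: from $k$ walkers, the time to halve is $O(t_{\rm meet}/k+t_{\rm mix})$ up to the stated $\sqrt{\cdot}$ refinement, using only the worst-case pairwise meeting time and mixing, both of which are intrinsically continuous-time notions.
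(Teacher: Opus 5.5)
Your argument for \eqref{eq:worst-stationary-meeting} matches the paper's. The optimal strong stationary time of the product chain has separation at most $\pi_*^{-2}e^{-t/t_{\rm rel}}$, hence mean at most $t_{\rm rel}(1+2\log\pi_*^{-1})$, and restarting from $\pi\otimes\pi$ at that time gives the bound. The gap is in \eqref{eq:kms-continuous}: the passage to continuous time is left open, and the route you propose does not work. The discrete estimate you want (\cite{kanade23}, in the reversible-kernel form \cite[Lemma~25]{koval2026meeting} used by the paper, which needs holding probabilities above $1/2$) is for walkers moving \emph{synchronously} under one fixed kernel. Uniformisation does not produce such a system, for either of its two possible forms. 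If each walker keeps its own rate-$\lambda$ clock, as independence requires, the induced discrete process moves one walker per step, so it is asynchronous and the theorem does not apply. If all walkers share one clock, you get the synchronous $I+Q/\lambda$ coalescent, but its walkers are no longer independent continuous-time chains. Its meeting and coalescence times are therefore not time changes of $t_{\rm meet}$ and $T_{\rm coal}$. So your claim that rescaling by $\max_v q(v)$ multiplies $t_{\rm meet}$ and $T_{\rm coal}$ by the same factor as $t_{\rm mix}$ fails, and ``laziness changes each quantity by constant factors'' is exactly what would need proof. Your fallback of redoing the halving argument is only a plan; the halving estimate with the $\sqrt{t_{\rm mix}/t_{\rm meet}}$ refinement is the whole content of \cite{kanade23}.

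The missing idea is to sample at deterministic common times rather than uniformise. Put $q_*:=\max_v q(v)$ and $h:=(2q_*)^{-1}$. The continuous-time walkers observed at times $kh$ are then \emph{exactly} independent synchronous walkers with kernel $P_h:=e^{hQ}$. This kernel is reversible with $P_h(v,v)\geq e^{-hq(v)}\geq e^{-1/2}>1/2$, so the discrete bound applies. Three comparisons then transfer it back, each in the needed direction:
\begin{enumerate}
\item Letting walkers coalesce only when they share a site at a skeleton time can only delay coalescence, so $T_{\rm coal}\leq hT^h_{\rm coal}$ pathwise.
\item Since $P_h^k=e^{khQ}$, we get $ht^h_{\rm mix}\leq t_{\rm mix}+h$.
\item After a continuous-time meeting, neither walker jumps before the next skeleton time with probability at least $e^{-2q_*h}=e^{-1}$. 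A restart argument then gives $ht^h_{\rm meet}\leq e(t_{\rm meet}+h)$.
\end{enumerate}
The additive $h$ is absorbed in two ways. First, $t_{\rm meet}\geq h$, since from distinct states no meeting precedes the first jump. Second, $t_{\rm mix}\geq q_*^{-1}\log(4/3)$, because a walker started at $v$ with $\pi(v)\leq 1/2$ is still at $v$ at time $t$ with probability at least $e^{-q_*t}$.
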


\begin{proof}
The product chain has relaxation time $t_{\rm rel}$ and minimum stationary mass $\pi_*^2$.  Hence the standard relaxation bound gives an optimal strong stationary time with mean at most $t_{\rm rel}(1+2\log\pi_*^{-1})$; see
\cite[Theorem~1]{fill1991time}.  Ignoring meetings until this time and
then starting from $\pi\otimes\pi$ immediately proves
\eqref{eq:worst-stationary-meeting}.

For \eqref{eq:kms-continuous}, put
\[
q_*:=\max_v q(v),\qquad h:=(2q_*)^{-1},\qquad P_h:=e^{hQ}.
\]

This kernel is reversible and
\[
P_h(v,v)\geq e^{-hq(v)}\geq e^{-1/2}>\frac12,
\]
so the reversible-kernel coalescence estimate
\cite[Lemma~25]{koval2026meeting}, extending
\cite[Theorem~1.1]{kanade23}, applies directly with a universal constant
$C$.  Write a superscript $h$ for the corresponding synchronous discrete chain, with time counted in timesteps.  Since $P_h^k=e^{khQ}$,
\[
t_{\rm mix}\leq ht_{\rm mix}^h\leq t_{\rm mix}+h.
\]

A meeting at a timestep is also a continuous-time meeting, and hence $t_{\rm meet}\leq ht_{\rm meet}^h$.  Conversely, after a continuous-time meeting, neither walk jumps before the next timestep with probability at least $e^{-2q_*h}=e^{-1}$.  Restarting at that timestep when this event fails gives
\[
ht_{\rm meet}^h \leq t_{\rm meet}+h+(1-e^{-1})ht_{\rm meet}^h.
\]

Therefore
\[
t_{\rm meet}\leq ht_{\rm meet}^h\leq e(t_{\rm meet}+h).
\]

If meetings between timesteps are ignored and walkers coalesce only when they occupy the same state at a timestep, coalescence can only be delayed and the resulting process is the $P_h$-coalescent.  Thus
\[
T_{\rm coal}\leq hT_{\rm coal}^h
\]
pathwise.  This is the priority coupling in the proof of
\cite[Lemma~26]{koval2026meeting}.

For distinct initial states a meeting cannot precede the first jump of either walk.  That time has rate at most $2q_*$ and hence mean at least $h$, so $t_{\rm meet}\geq h$.  Choose also $v$ with $\pi(v)\leq1/2$.  The event that the walk at $v$ makes no jump before time $t$ gives
\[
\bigl\|e^{tQ}(v,\cdot)-\pi\bigr\|_{\rm TV} \geq e^{-q_*t}-\frac12,
\]
and therefore $t_{\rm mix}\geq q_*^{-1}\log(4/3)$.  The additive $h$ terms in the preceding comparisons can consequently be absorbed into $t_{\rm meet}$ and $t_{\rm mix}$.  Only universal numerical factors are absorbed into $C$, and the cited estimate now gives
\[
\e T_{\rm coal} \leq C\left(ht_{\rm meet}^h +\sqrt{(ht_{\rm meet}^h)(ht_{\rm mix}^h)}\log N\right) \leq Ct_{\rm meet} \left(1+\sqrt{t_{\rm mix}/t_{\rm meet}}\log N\right),
\]
which is \eqref{eq:kms-continuous}.
\end{proof}

\subsection{The hyperbolic graph}

Throughout, empirical degree moments are taken over the giant component:
\[
M_q:=\sum_{v\in V(\cG)}\de(v)^q, \qquad q\in\mathbb R.
\]

For $\varphi=0$ on the giant component,
\cite{dieter_mixing} gives, with high probability,
\[
t_{\rm rel} =O\left( n^{2\alpha-1} \log^{\frac{1}{1-\alpha}} n \right).
\]

By \cite[Lemma 4.23]{aldous-fill-2014}, the mixing time is at most a logarithmic factor larger.  Thus
\[
t_{\rm mix}(0)=\widetilde O(n^{2\alpha-1})
\]
with high probability, and the same upper bound holds for every fixed $\varphi\geq0$.  Indeed,
\[
\var_\pi(f) = \frac12\sum_i\sum_j (f(i)-f(j))^2\pi(i)\pi(j),
\]
and therefore, for \(\varphi\geq0\),
\[
\begin{split}
t_{\rm rel}(\varphi) &= \frac{1}{M_{1-\varphi}} \sup_f \frac{\sum_i\sum_j (f(i)-f(j))^2 \de(i)^{1-\varphi}\de(j)^{1-\varphi}} {\sum_i\sum_j
 (f(i)-f(j))^2\mathbbm{1}_{i\sim j}}\\
&\leq \frac{M_1}{M_{1-\varphi}}\, t_{\rm rel}(0).
\end{split}
\]

Since $1-\varphi\leq1<2\alpha$, Proposition~\ref{prop:giant-degree-moments} gives $M_1\whpasymp n$ and $M_{1-\varphi}\whpasymp n$ for every fixed $\varphi\geq0$.  Finally, if $d_{\max}:=\max_v\de(v)$, then the largest-degree estimate above gives, with high probability,
\[
\pi_*^{-1}\leq M_{1-\varphi}d_{\max}^{(\varphi-1)_+} \leq n^{1+(\varphi-1)_+/(2\alpha)+o(1)}.
\]

The relaxation-to-mixing comparison therefore proves the following generalisation of \cite{dieter_mixing}.

\begin{corollary}[Mixing bound]\label{cor_comparison}
For every fixed $\varphi\geq0$, with high probability,
\[
t_{\rm mix}(\varphi) =\widetilde O(n^{2\alpha-1}).
\]
\end{corollary}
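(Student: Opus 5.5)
The plan is to follow the chain of comparisons already set out before the statement: bound $t_{\rm rel}(\varphi)$ by $t_{\rm rel}(0)$ through the Dirichlet-form comparison, then convert relaxation to mixing via \cite[Lemma 4.23]{aldous-fill-2014}, which costs a factor $\log(1/\pi_*)$. All three inputs are given in the surrounding text, so the proof is essentially bookkeeping of the high-probability events.

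\textbf{Step 1: relaxation comparison.} For the chain with generator \eqref{eq:voter-kernel}, the Dirichlet form is
\[
\mathcal E_\varphi(f)=\frac{1}{2M_{1-\varphi}}\sum_{i\sim j}(f(i)-f(j))^2,
\]
since $\pi(i)Q_{ij}=\de(i)^{1-\varphi}\de(i)^{\varphi-1}/M_{1-\varphi}$ for $i\sim j$. Thus $\mathcal E_\varphi=(M_1/M_{1-\varphi})\,\mathcal E_0$. For the variances, I use the double-sum representation of $\var_\pi$ together with the bound $\de(i)^{1-\varphi}\de(j)^{1-\varphi}\leq\de(i)\de(j)$, which holds because $\de\geq1$ and $\varphi\geq0$. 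This gives
\[
M_{1-\varphi}^2\var_{\pi_\varphi}(f)\leq M_1^2\var_{\pi_0}(f).
\]
Taking the ratio,
\[
t_{\rm rel}(\varphi)\leq\frac{M_1}{M_{1-\varphi}}\,t_{\rm rel}(0).
\]
Proposition~\ref{prop:giant-degree-moments} gives $M_1\whpasymp n$ and $M_{1-\varphi}\whpasymp n$, so with high probability
\[
t_{\rm rel}(\varphi)=O(t_{\rm rel}(0))=O\bigl(n^{2\alpha-1}\log^{1/(1-\alpha)}n\bigr),
\]
using \cite{dieter_mixing}.

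\textbf{Step 2: minimum stationary mass.} We have $\pi_*^{-1}=M_{1-\varphi}/\min_v\de(v)^{1-\varphi}$. If $\varphi\leq1$, the minimum is at least $1$, so $\pi_*^{-1}\leq M_{1-\varphi}=O(n)$. If $\varphi>1$, the minimum is at least $d_{\max}^{1-\varphi}$, and \cite[Theorem 5]{gugelmann2012random} gives $d_{\max}\leq n^{1/(2\alpha)+o(1)}$. In both cases $\log\pi_*^{-1}=O(\log n)$ with high probability, with the constant depending on the fixed $\varphi$.

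\textbf{Step 3: conclusion.} Now apply $t_{\rm mix}\leq t_{\rm rel}(1+\tfrac12\log\pi_*^{-1})$ (continuous time, reversible) from \cite[Lemma 4.23]{aldous-fill-2014}. This yields
\[
t_{\rm mix}(\varphi)=O\bigl(n^{2\alpha-1}\log^{1+1/(1-\alpha)}n\bigr)=\widetilde O(n^{2\alpha-1}).
\]
A union over the finitely many high-probability events (the relaxation bound, the two moment estimates, and the maximal degree) completes the proof.

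The only potentially delicate point is Step 1. There one must be sure that the comparison is over all $f$: the numerator is the $\varphi$-weighted variance and the denominator the unweighted edge sum. One must also check that the giant-component restriction does not change $\min\de\geq1$, which it does not. Everything else is a direct citation.
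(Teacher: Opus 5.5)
Your proof is correct and follows the paper's own argument: the double-sum variance comparison using $\de(i)^{1-\varphi}\de(j)^{1-\varphi}\leq\de(i)\de(j)$ gives $t_{\rm rel}(\varphi)\leq (M_1/M_{1-\varphi})\,t_{\rm rel}(0)$. This is combined with $M_1, M_{1-\varphi}\whpasymp n$, the bound $\pi_*^{-1}\leq M_{1-\varphi}d_{\max}^{(\varphi-1)_+}$, and the relaxation-to-mixing comparison of \cite[Lemma 4.23]{aldous-fill-2014}. Your case split for $\pi_*$ and your explicit logarithmic power are simply a more detailed write-up of the same bookkeeping.
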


\section{Proofs of the main theorem}

We first prove the lower bound simultaneously in all three regimes.  Let $I,J$ be independent samples from $\pi$. We can lower bound the consensus event by the event that $I$ and $J$ disagree, giving by voter model duality
\[
\p_{\mu_u}(T_{\rm cons}>t\mid\cG) \geq \p_{\mu_u}(\xi_t(I)\neq\xi_t(J)\mid\cG) = 2u(1-u)\p_{\pi\otimes\pi}(T_{\rm meet}>t\mid\cG).
\]

Integrating in $t$ and applying \cite[Equation~3.21]{coxchenchoi} yields
\begin{equation}
\e_{\mu_u}(T_{\rm cons}\mid\cG) \geq 2u(1-u)t_{\rm meet}^{\pi} \geq \frac{u(1-u)}2 \frac{M_{1-\varphi}^2}{M_{2-\varphi}} \left(1-\sum_v\pi(v)^2\right)^2.                    \label{eq:general-consensus-lower}
\end{equation}

Corollary~\ref{cor:degree-collision} and Proposition~\ref{prop:giant-degree-moments} show that the right-hand side has the order claimed in Theorem \ref{thm:consensus}.  This lower bound is simple because the claimed ``mean-field'' order, also suggested by the heterogeneous mean-field methods of \cite{sood2008voter}, is the fastest possible order for the given $\pi$ and $q$.

It remains to prove the upper bounds.  We first treat $\varphi>2-2\alpha$, using the random-target estimate of
\cite{MR4816414} rather than an escape probability.

\begin{proof}[Upper bound in Theorem~\ref{thm:consensus} for
$\varphi>2-2\alpha$] The chain dual to the voter model has generator
\[
Q_{v,w}=\de(v)^{\varphi-1}\mathbbm{1}_{v\sim w}, \qquad \pi(v)=\frac{\de(v)^{1-\varphi}}{M_{1-\varphi}}.
\]

Consequently its edge conductances are constant:
\begin{equation}
c_\varphi(v,w):=\pi(v)Q_{v,w} =\frac{\mathbbm{1}_{v\sim w}}{M_{1-\varphi}}.                     \label{eq:constant-conductance}
\end{equation}
Write $\cR_{\cG}(x\leftrightarrow y)$ for effective resistance when every edge of $\cG$ has unit resistance.  The continuous-time commute identity and
\eqref{eq:constant-conductance} give
\begin{equation}
\e_x T_y+\e_y T_x =M_{1-\varphi}\cR_{\cG}(x\leftrightarrow y).                     \label{eq:weighted-commute}
\end{equation}
Averaging \eqref{eq:weighted-commute} against $\pi\otimes\pi$ gives the random-target time
\[
t_{\odot}^{(\varphi)}:=\sum_y\pi(y)\e_\pi T_y =\frac1{2M_{1-\varphi}}\sum_{x,y} \de(x)^{1-\varphi}\de(y)^{1-\varphi} \cR_{\cG}(x\leftrightarrow y).
\]

For $\varphi=0$, the walk jumps at rate one and its embedded jump chain is discrete-time simple random walk.  Conditional on the jump chain taking $m$ steps to hit a given target, the continuous hitting time is a sum of $m$ independent mean-one exponential variables, and thus the continuous and discrete random-target times agree exactly.  Hence we can apply \cite[Theorem~1]{MR4816414} to say
\[
t_{\odot}^{(0)} =\frac1{2M_1}\sum_{x,y}\de(x)\de(y) \cR_{\cG}(x\leftrightarrow y) \whpasymp n.
\]

Since $\varphi>0$ we have $\de(v)^{1-\varphi}\leq\de(v)$, and therefore with high probability
\[
t_{\odot}^{(\varphi)} \leq\frac{M_1}{M_{1-\varphi}}t_{\odot}^{(0)}=O(n).
\]

By \cite[Proposition~2]{aldous91}, followed by Jensen's inequality, and the mixing time of Corollary~\ref{cor_comparison} we have, with high probability,
\[
t_{\rm meet}=O\bigl(t_{\rm mix}+t_{\odot}^{(\varphi)}\bigr)=O(n).
\]

Put $N:=|V(\cG)|$, so $N\whpasymp n$.  Lemma~\ref{lem:coalescence-comparison} now yields, with high probability,
\[
\e(T_{\rm coal}\mid\cG) =O\bigl(t_{\rm meet}+\sqrt{t_{\rm meet}t_{\rm mix}}\log N\bigr)=O(n),
\]
because, with high probability, the square-root term is $\widetilde O(n^\alpha)=o(n)$.  Graphical duality consequently gives, with high probability,
\begin{equation}
\e_{\mu_u}(T_{\rm cons}\mid\cG)\leq\e(T_{\rm coal}\mid\cG)=O(n).       \label{eq:large-rate-upper}
\end{equation}

Together with \eqref{eq:general-consensus-lower}, this proves \eqref{eq_theorem_3}.
\end{proof}

In the remaining harder cases, we will construct a flow to lower bound the escape probability by a positive constant.

\begin{theorem}[Escape probability]\label{thm:escape-all}
Suppose $0\leq\varphi\leq 2-2\alpha$.  There exists some $\kappa>0$ such that
\begin{equation}
\p_\rho(T_{\rm mix}<n^\kappa<T_{\rm meet}\mid\cG)\pasymp1.
\end{equation}

At $\varphi=2-2\alpha$, this holds in the stronger sense that there is a fixed $c>0$ such that, with high probability,
\[
\p_\rho(T_{\rm mix}<n^\alpha<T_{\rm meet}\mid\cG)\geq c.
\]
\end{theorem}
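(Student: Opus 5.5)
The plan is to follow the abstract: build an electrical network for the product chain $Z=(X,Y)$, short the diagonal $\Delta$, and connect it to a fixed target of large height.

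\textbf{Network and target.} The product chain is reversible with conductances $c(\{(x,y),(x',y)\})=\pi(y)c_\varphi(x,x')$, and symmetrically in the second coordinate, where $c_\varphi$ is the constant conductance \eqref{eq:constant-conductance}. Short $\Delta$ to a single node. Take the target $\mathcal T=\{(a,b)\}$, where $a,b$ are two fixed vertices of the giant at radial height about $R-2\log n^{\beta}$, with angular separation of constant order. The exponent $\beta$ is chosen so that each such vertex has degree about $n^{\beta}$, which is polynomially large but still $o(n^{1/(2\alpha)})$.

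\textbf{Step 1 (escape from $\Delta$ via a flow).} Starting from $\rho$, we want $\p_\rho(T_{\mathcal T}<T_{\rm meet})=\Omega(1)$. By the standard escape identity,
\[
\p_\rho(T_{\mathcal T}<T_\Delta^+)=\frac{1}{\cR(\Delta\leftrightarrow\mathcal T)\,\sum_{z\in\Delta}\pi^{\otimes2}(z)q_Z(z)}.
\]
The normaliser $\sum_{z\in\Delta}\pi^{\otimes 2}(z)q_Z(z)$ is $2A=2\sum_v q(v)\pi(v)^2$. It therefore suffices to exhibit a unit flow from $\Delta$ to $\mathcal T$ with energy $O(A^{-1})$. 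Here $A\asymp M_{2-\varphi}/M_{1-\varphi}^2$ by Proposition~\ref{prop:giant-degree-moments}. Since resistances are $M_{1-\varphi}/\pi(y)$, this reduces to a purely graph-theoretic energy bound.

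\textbf{Step 2 (the flow).} Distribute the unit flow over the diagonal proportionally to $\rho$. From each $(v,v)$, first move one walker to a neighbour. Then route the two walkers independently upward, with the other coordinate held fixed, using the geodesic-tree flows of \cite{MR4816414}. These flows send each vertex up through hyperbolic dyadic layers to the high-degree hubs, and then across to $a$ and to $b$ respectively. Move one coordinate at a time: first route $X$ to $a$ while $Y$ sits at low height, then route $Y$ to $b$ while $X=a$.

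The energy of the first stage is weighted by $1/\pi(Y)$ with $Y$ low. The energy of the second stage is weighted by $1/\pi(a)=M_{1-\varphi}d(a)^{\varphi-1}$, which is tolerable when $\varphi\le1$. For $\varphi>1$ a different choice is needed; note that $2-2\alpha<1$, so this case does not arise in our range.

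Summing layer by layer, level $k$ carries vertex degrees of order $e^{k/2}$ with counts $ne^{-\alpha k}$. The energy is then a geometric sum whose terms behave like $e^{k(2-2\alpha-\varphi)/2}$ up to constants. This is decreasing for $\varphi<2-2\alpha$, giving $O(A^{-1})$, and flat for $\varphi=2-2\alpha$. In the critical case the $\log n$ layers match $A^{-1}\asymp n/\log n$. Estimating these layer sums uniformly over the random graph, in probability, I expect to be the main obstacle. The difficulty is controlling the non-treelike local geometry near the diagonal start, where flow concentrates on edges at low height and cannot simply be treated as a tree.

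\textbf{Step 3 (timing).} We need $T_{\rm mix}<n^\kappa<T_{\mathcal T}$, together with $T_{\mathcal T}<T_{\rm meet}$ from Step 1; these give the claimed event.

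For the lower bound on $T_{\mathcal T}$: by Corollary~\ref{cor_comparison}, $T_{\rm mix}=\widetilde O(n^{2\alpha-1})$, so choose $\kappa$ slightly above $2\alpha-1$. For $\varphi=2-2\alpha$ take the target time $n^\alpha>n^{2\alpha-1}$. To show the target is unlikely to be hit before $n^\kappa$, bound $\p_\rho(T_{\mathcal T}\le n^{\kappa})$ by the expected occupation of $\mathcal T$ up to that time. This is at most of order $n^{\kappa}\pi(a)\pi(b)\max q$ plus a term for starting near $\mathcal T$. Choose $\beta$ so that $\pi(a)\pi(b)n^{\kappa}q(a)\to0$, which makes this small.

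After $T_{\mathcal T}$, use the strong stationary time: mixing occurs before $n^\kappa$ with probability close to one. Combining the three bounds gives the claimed $\Omega(1)$ probability. The result holds in probability in general, and with high probability at criticality, where the flow estimate of Step 2 concentrates.
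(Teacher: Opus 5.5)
Your Step~1 reduction (short $\Delta$, use the escape identity, then Thomson's principle) is exactly the paper's. The gap is the target you feed into it, and it bites before any of the layer sums you flag. A single product state $\mathcal T=\{(a,b)\}$ cannot absorb a unit flow cheaply. By Cauchy--Schwarz over the edges at $(a,b)$, every unit flow into it has energy at least $1/\cC(\mathcal T)$, where
\[
\cC(\mathcal T)=\pi(a)\pi(b)\bigl(q(a)+q(b)\bigr) =\frac{\de(a)\de(b)^{1-\varphi}+\de(b)\de(a)^{1-\varphi}}{M_{1-\varphi}^2} =\frac{n^{\beta(2-\varphi)+o(1)}}{M_{1-\varphi}^2}.
\]
Your normaliser is $2A=2M_{2-\varphi}/M_{1-\varphi}^2$, and with high probability $M_{2-\varphi}\geq\max_v\de(v)^{2-\varphi}=n^{(2-\varphi)/(2\alpha)-o(1)}$ throughout $0\leq\varphi\leq2-2\alpha$. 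Your own identity therefore gives
\[
\p_\rho(T_{\mathcal T}<T_{\rm meet}\mid\cG)\leq\frac{\cC(\mathcal T)}{2A} \leq n^{-(2-\varphi)(1/(2\alpha)-\beta)+o(1)}\longrightarrow0
\]
for every $\beta<1/(2\alpha)$, which is the range you propose. So no flow as in Step~2 exists, whatever the routing, and the escape estimate fails.

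The missing idea is a target at which only \emph{one} walker is pinned. The paper uses $B_w=(\{w\}\times(V\setminus\{w\}))\cup((V\setminus\{w\})\times\{w\})$ with $\de(w)=n^{\beta+o(1)}$. Its boundary conductance is of order $\de(w)/M_{1-\varphi}$, so the obstruction above disappears in the relevant $\beta$-window.

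Even then the routing matters. By Lemma~\ref{lem:weighted-lift}, a leg in which one coordinate travels to $w$ while the other is frozen with law $\mu$ costs $M_{1-\varphi}^2I_\varphi(\mu)\cE_{\cG}(f)$. The sector cut at $w$ gives $\cE_{\cG}(f)\geq\de(w)^{-\chi}n^{-o(1)}$, so keeping this leg $O(A^{-1})$ requires the polynomially small bound $I_\varphi(\mu)\lesssim\de(w)^{\chi}/M_{2-\varphi}$. The lower bound on $\beta$ in \eqref{eq:beta-window} is exactly the condition under which the optimal order $I_\varphi(\mu)=O(1/n)$ suffices (recall $I_\varphi(\mu)\geq1/M_{1-\varphi}$ always). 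Freezing $Y$ at a low-height vertex gives $I_\varphi\asymp1$. Freezing it at a uniform neighbour of a hub source $v$ gives $I_\varphi\asymp1/\de(v)$, which already fails at $\varphi=0$ because $\chi<1$.

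The paper gets a suitable $\mu$ by alternately lifting the multiscale half-tile flows of Lemma~\ref{lem:separated-multiscale-flows}. These carry each coordinate from the source down to $\asymp n$ boundary vertices in disjoint angular sectors, so that $I_\varphi(\lambda_J^1)=O(1/n)$. Only then does it send the first coordinate back through the source to $w$.

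The choice of source also differs from yours:
\begin{itemize}
\item For $\varphi<2-2\alpha$ the source is a single hub $s$ with $a_s=\Omega(1)$ in probability. Your layer terms $e^{k(2-2\alpha-\varphi)/2}$ increase with $k$, not decrease, so $\rho$ is carried by the top-degree vertices; this is also why that case holds only in probability.
\item At criticality the source is a harmonically weighted family $v_j$. Their first lifts have disjoint supports, so their energies add up to $O(n/\log n)$.
\end{itemize}

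Finally, your timing step concerns a single walker hitting $w$ from the non-stationary start $\rho$. A stationary occupation rate plus an unspecified start correction does not control this. The paper uses three ingredients:
\begin{itemize}
\item the Green-function bound $\int_0^tp_u(v,v)\de u\leq t\pi(v)+\pi(v)\e_\pi T_v$;
\item the potential bound $R_\varphi(v)\leq\de(v)^{-\chi}n^{o(1)}$;
\item the cut bound $\cR_{\cG}(v\leftrightarrow w)\geq\de(w)^{-\chi}n^{-o(1)}$.
\end{itemize}
This is where the constraint $\kappa<1-\chi\beta$ comes from.
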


The required escape estimates are proved in Theorems~\ref{thm:escape-hrg} and~\ref{thm:critical-escape}, using the two flow constructions developed below.  We first show how these estimates complete the proof of the main theorem.

\begin{proof}[Upper bound in Theorem~\ref{thm:consensus} for
$0\leq\varphi\leq2-2\alpha$] Proposition~\ref{prop:giant-degree-moments} gives
\begin{equation}
\begin{aligned}
\frac{M_{1-\varphi}^2}{M_{2-\varphi}} &\pasymp n^{\,2-1/\alpha+\varphi/(2\alpha)},
  &&0\leq\varphi<2-2\alpha,\\
\frac{M_{1-\varphi}^2}{M_{2-\varphi}} &\whpasymp n/\log n, &&\varphi=2-2\alpha.
\end{aligned}                                                       \label{eq:small-critical-scales}
\end{equation}

We check the conditions of Proposition~\ref{prop:poisson-clumping} with $t=n^\kappa$.  Condition~\ref{cond:clumping-collision} follows from Corollary~\ref{cor:degree-collision}.  For condition~\ref{cond:clumping-window}, in the strictly subcritical case choose $\beta$ and $\kappa$ as in
\eqref{eq:beta-window} and \eqref{eq:kappa-window}.  Then
\[
2\alpha-1<\kappa<1-4\alpha(1-\alpha)\beta <2-\frac1\alpha+\frac{\varphi}{2\alpha},
\]
whereas at criticality in Theorem~\ref{thm:critical-escape} we take $t=n^\alpha$, so with $\kappa=\alpha$ in this case we use $2\alpha-1<\alpha<1$.  Corollary~\ref{cor_comparison} and
\eqref{eq:small-critical-scales} therefore give, with high probability,
$t_{\rm mix}\ll n^\kappa\ll {M_{1-\varphi}^2}/{M_{2-\varphi}}$ in both cases. Condition~\ref{cond:clumping-escape} is precisely Theorem~\ref{thm:escape-all}.  Thus, for every $\varepsilon>0$, Proposition~\ref{prop:poisson-clumping} gives, outside a set of graphs of probability at most $\varepsilon+o(1)$,
\[
t_{\rm meet}^\pi=O_\varepsilon\left(\frac{M_{1-\varphi}^2}{M_{2-\varphi}}\right).
\]

It remains to check that the two error terms in Lemma~\ref{lem:coalescence-comparison} are negligible. Since $\varphi\in [0,1)$, Proposition~\ref{prop:giant-degree-moments} gives, with high probability,
\[
\pi_*^{-1}\leq M_{1-\varphi}\leq M_{1}=O(n).
\]

The same polynomial gaps and Corollary~\ref{cor_comparison} give, with high probability,
\begin{equation}
t_{\rm rel}(1+\log\pi_*^{-1}) =o\left(\frac{M_{1-\varphi}^2}{M_{2-\varphi}}\right), \qquad t_{\rm mix}\log^2N =o\left(\frac{M_{1-\varphi}^2}{M_{2-\varphi}}\right).     \label{eq:coalescence-gap}
\end{equation}

After enlarging the same exceptional set by $o(1)$,
\eqref{eq:worst-stationary-meeting} gives
$t_{\rm meet}=O_\varepsilon(M_{1-\varphi}^2/M_{2-\varphi})$. Hence the second relation in \eqref{eq:coalescence-gap} gives
\[
\sqrt{t_{\rm meet}t_{\rm mix}}\log N =o\left(\frac{M_{1-\varphi}^2}{M_{2-\varphi}}\right),
\]
and \eqref{eq:kms-continuous} and duality give
\[
\e_{\mu_u}(T_{\rm cons}\mid\cG) \leq\e(T_{\rm coal}\mid\cG) =O_\varepsilon\left(\frac{M_{1-\varphi}^2}{M_{2-\varphi}}\right)
\]
outside a set of graphs of probability at most $\varepsilon+o(1)$. At criticality, the stronger assertion in Theorem~\ref{thm:escape-all} and the same negligibility estimates give, with high probability,
\[
\e_{\mu_u}(T_{\rm cons}\mid\cG) =O\left(\frac{M_{1-\varphi}^2}{M_{2-\varphi}}\right).
\]

The orders
\eqref{eq:small-critical-scales} make this prove the upper bounds of \eqref{eq_theorem_1} and \eqref{eq_theorem_2}.
\end{proof}

Recall that the lower bounds were immediate in \eqref{eq:general-consensus-lower}. It remains to find this $\Omega(1)$ escape probability, i.e. to prove Theorem \ref{thm:escape-all}.

\subsection{Strictly subcritical escape}

We now verify condition~\ref{cond:clumping-escape} of Proposition~\ref{prop:poisson-clumping} for the range
\[
0\leq\varphi<2-2\alpha.
\]

Put also $\chi:=4\alpha(1-\alpha)$. We have two alternative definitions for the one-dimensional and the two-dimensional flows.  A base-graph flow is an antisymmetric function on the edges of the graph, with
\[
\operatorname{div}_{\cG}f(x):=\sum_{x'\sim x}f(x,x'), \qquad \cE_{\cG}(f):=\frac12\sum_x\sum_{x'\sim x}f(x,x')^2.
\]

Thus $\cE_{\cG}$ is the energy for unit edge conductances. For the degree-dependent walk in \eqref{eq:voter-kernel}, we normalise so that the edge conductance is equal to the ergodic rate of traversing the (directed) edge with edge conductance $1/M_{1-\varphi}$ and hence flow energy $M_{1-\varphi}\cE_{\cG}(f)$.

For the product chain, we have to consider the conductances associated to the product Markov chain and in this case that network does not have constant conductance. Put
\begin{align*}
c_\times((x,y),(x',y)) &=\frac{\de(y)^{1-\varphi}}{M_{1-\varphi}^2},
 &&xx'\in E(\cG),\\
c_\times((x,y),(x,y')) &=\frac{\de(x)^{1-\varphi}}{M_{1-\varphi}^2}, &&yy'\in E(\cG),
\end{align*}
and, for an antisymmetric product flow $F$, set
\[
\operatorname{div}_\times F(z):=\sum_{z'\sim_\times z}F(z,z'), \qquad \cE_\times(F):=\frac12\sum_z\sum_{z'\sim_\times z} \frac{F(z,z')^2}{c_\times(z,z')}.
\]

\begin{definition}[Lifted flow]
The first-coordinate and  second-coordinate lifts of a flow $f$ and a measure $\lambda$ are respectively
\[
\mathsf L_1(f,\lambda)((x,y),(x',y'))=
 \begin{cases}
 f(x,x')\lambda(y), & y=y',\\
0, & \text{otherwise}
 \end{cases}
\]
and
\[
\mathsf L_2(\lambda,f)((x,y),(x',y'))=
 \begin{cases}
 f(y,y')\lambda(x), & x=x',\\
0, & \text{otherwise.}
 \end{cases}
\]
\end{definition}

\begin{lemma}[Energy of a lifted flow]                          \label{lem:weighted-lift}
The lifted flow has divergence and energy
\begin{equation}
\operatorname{div}_\times\mathsf L_1(f,\lambda) =(\operatorname{div}_{\cG}f)\otimes\lambda, \qquad \cE_\times(\mathsf L_1(f,\lambda)) =M_{1-\varphi}^2I_\varphi(\lambda)\cE_{\cG}(f), \label{eq:weighted-lift}
\end{equation}
where for a probability measure $\lambda$ on $V(\cG)$ we write
\[
I_\varphi(\lambda):= \sum_y\frac{\lambda(y)^2}{\de(y)^{1-\varphi}}.
\]

The second-coordinate lift has divergence $\lambda\otimes\operatorname{div}_{\cG}f$ and the same energy.
\end{lemma}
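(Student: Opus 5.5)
This is a direct computation from the definitions. I will do the first-coordinate lift $F:=\mathsf L_1(f,\lambda)$ explicitly and get the second by the symmetry $(x,y)\mapsto(y,x)$.

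\emph{Divergence.} The product-graph neighbours of $z=(x,y)$ are the points $(x',y)$ with $x'\sim x$ and the points $(x,y')$ with $y'\sim y$. By definition $F$ vanishes on every edge of the second type. On an edge of the first type it equals $f(x,x')\lambda(y)$. Hence
\[
\operatorname{div}_\times F(x,y)=\sum_{x'\sim x}f(x,x')\lambda(y)=\operatorname{div}_{\cG}f(x)\,\lambda(y),
\]
which is $(\operatorname{div}_{\cG}f)\otimes\lambda$. Antisymmetry of $F$ is inherited from antisymmetry of $f$, since $\lambda(y)$ is the same at both endpoints.

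\emph{Energy.} Only first-coordinate edges contribute. On such an edge the conductance is $c_\times((x,y),(x',y))=\de(y)^{1-\varphi}/M_{1-\varphi}^2$, so
\[
\cE_\times(F)=\frac12\sum_{(x,y)}\sum_{x'\sim x}\frac{f(x,x')^2\lambda(y)^2M_{1-\varphi}^2}{\de(y)^{1-\varphi}}.
\]
The summand factorises into a part depending on $(x,x')$ and a part depending on $y$. The sum therefore splits as
\[
M_{1-\varphi}^2\Bigl(\sum_y\frac{\lambda(y)^2}{\de(y)^{1-\varphi}}\Bigr)\Bigl(\frac12\sum_x\sum_{x'\sim x}f(x,x')^2\Bigr)=M_{1-\varphi}^2I_\varphi(\lambda)\cE_{\cG}(f).
\]
All sums are finite, so there are no convergence issues.

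\emph{Second coordinate.} For $\mathsf L_2(\lambda,f)$ the same computation applies with the roles of the coordinates exchanged. The conductance $\de(x)^{1-\varphi}/M_{1-\varphi}^2$ on second-coordinate edges is the mirror image of the first-coordinate one. This gives divergence $\lambda\otimes\operatorname{div}_{\cG}f$ and the same energy.

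The only step needing any care is matching each edge type to the correct conductance. The conductance on an edge that moves $x$ depends on the \emph{fixed} coordinate $y$, and that is exactly what makes the weight $\lambda(y)^2/\de(y)^{1-\varphi}$ factor out. I do not expect any real obstacle.
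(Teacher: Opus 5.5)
Your proposal is correct and follows the paper's proof. The paper likewise factorises the energy sum using that the conductance on a first-coordinate edge depends only on the frozen coordinate $y$, and it treats the second lift as an identical calculation. Your explicit divergence computation is a detail the paper leaves implicit.
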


\begin{proof}
Using the definitions of both energies,
\[
\begin{split}
\cE_\times(\mathsf L_1(f,\lambda)) &=\frac12\sum_y\sum_x\sum_{x'\sim x} \frac{f(x,x')^2\lambda(y)^2}
        {\de(y)^{1-\varphi}/M_{1-\varphi}^2}\\
&=M_{1-\varphi}^2 \left(\sum_y\frac{\lambda(y)^2}{\de(y)^{1-\varphi}}\right) \left(\frac12\sum_x\sum_{x'\sim x}f(x,x')^2\right),
\end{split}
\]
which is \eqref{eq:weighted-lift}.  The calculation for the second-coordinate lift is identical.
\end{proof}

We next record the part of the tiling of
\cite[Section~4.1]{MR4816414} used in the two flow lemmas below.  For
positive radial coordinates $a,b$, let $\theta_R(a,b)$ be the largest angular separation of two such points whose hyperbolic distance is $R$ (i.e. if the points are within this angular separation, we know they are adjacent).  This is
\[
\theta_R(a,b):=
 \begin{cases}
\pi, & a+b\leq R,\\[2mm] \displaystyle\arccos\!\left( \frac{\cosh a\cosh b-\cosh R}{\sinh a\sinh b}\right), & a+b>R.
 \end{cases}
\]

Fix a sufficiently large tiling parameter $c_{\rm t}>0$ and set
\begin{equation}
\begin{gathered}
r_{-1}^{\rm t}:=0,\qquad r_0^{\rm t}:=R/2,\qquad r_1^{\rm t}:=(R+c_{\rm t})/2,\\[0.5em] r_j^{\rm t}:=\sup\left\{r:\theta_R(r,r)\geq \tfrac12\theta_R(r_{j-1}^{\rm t},r_{j-1}^{\rm t})\right\}, \qquad j\geq2,\\[0.2em] N_0:=\left\lceil\frac{2\pi} {\theta_R(r_1^{\rm t},r_1^{\rm t})}\right\rceil, \qquad \theta_j:=\frac{2\pi}{2^jN_0}.
\end{gathered}                                                    \label{eq:half-tile-levels}
\end{equation}

Partition the angular circle into $N_0$ half-open intervals of length $\theta_0$, starting at angle $0$, and obtain the level-$j$ intervals by bisecting at each successive level.  A level-$j$ tile is a set
\[
\{(r,\vartheta):r_{j-1}^{\rm t}\leq r<r_j^{\rm t},\ \vartheta\in I\},
\]
where $I$ is one of the level-$j$ intervals.  Its two \emph{half-tiles} are obtained by bisecting $I$.  If $A$ is a level-$j$ half-tile, its angular interval is one of the level-$(j+1)$ intervals; the two half-tiles of the corresponding level-$(j+1)$ tile are the children of $A$.  Thus the $k$th generation below $A$ consists of $2^k$ half-tiles whose angular intervals partition that of $A$.  By
\cite[Remark~21]{MR4816414}, if $A'$ is a child of $A$, then
\begin{equation}
d_{\mathbb H}(x,y)\leq R \qquad\text{for every }x\in A\text{ and }y\in A'.             \label{eq:half-tile-adjacency}
\end{equation}

Put $h_j:=R-r_j^{\rm t}$.  The estimates in
\cite[Claims~23--24]{MR4816414} give, uniformly over the levels used
below,
\begin{equation}
h_j-h_{j+k}=k\log2+O(1),\qquad \frac{\theta_j}{2}\asymp e^{h_j-R/2},\qquad \e|\mathcal P_n\cap A|\asymp e^{(1-\alpha)h_j}                 \label{eq:half-tile-scales}
\end{equation}
for every level-$j$ half-tile $A$.  Call $A$ \emph{occupied} when $\mathcal P_n\cap A\ne\varnothing$, and let $\mathrm{unif}_{\mathcal P_n\cap A}$ denote the uniform probability measure on $\mathcal P_n\cap A$.

\begin{lemma}[Flow through one initial half-tile]
\label{lem:single-descendant-flow}
Fix a sufficiently large constant $h_*>0$ and a terminal level $j_*$ with $h_{j_*}=h_*+O(1)$.  Let $H$ be a level-$j$ half-tile with $j\leq j_*$, and put
\[
k:=j_*-j,\qquad m:=\exp\{(1-\alpha)(h_j-h_{j_*})\}.
\]

Suppose that a prescribed point $v$ lies outside $H$ and all its descendants through level $j_*$, and is adjacent to every point of $H$. Under the Palm law obtained by adding $v$, with asymptotically positive probability one can construct a collection $\mathcal L_H$ of occupied level-$j_*$ descendants of $H$ and a unit flow $\Gamma_H$ on the component of $v$, supported on $v$ and the points in the descendant half-tiles between levels $j$ and $j_*$.  The collection contains at least a fixed positive proportion of the $2^k$ terminal descendants and, with $Z_H:=|\mathcal L_H|$,
\[
\operatorname{div}\Gamma_H=\mathbbm{1}_v- \frac1{Z_H}\sum_{L\in\mathcal L_H} \mathrm{unif}_{\mathcal P_n\cap L},
\]
while $\cE_{\cG}(\Gamma_H)\leq C/m$ for a fixed $C<\infty$.

The construction uses only the Poisson points in these half-tiles. Success events for initial half-tiles with disjoint descendant regions are therefore independent.
\end{lemma}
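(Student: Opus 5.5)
The plan is to build $\Gamma_H$ as a downward \emph{splitting flow} along the binary tree of half-tiles below $H$, the same way as the flow construction of \cite[Section~4]{MR4816414}. Pass from the Palm point $v$ to $\mathrm{unif}_{\mathcal P_n\cap H}$, then at each generation split the mass of every occupied half-tile equally between its two children and spread it uniformly over the Poisson points in each child. By \eqref{eq:half-tile-adjacency}, every point of a half-tile is adjacent to every point of each child. Hence the transfer from $\mathrm{unif}_{\mathcal P_n\cap A}$ to $\mathrm{unif}_{\mathcal P_n\cap A'}$ can be routed along the complete bipartite edge set between the two point sets, giving flow $w/(|A||A'|)$ on each edge when mass $w$ is moved, where $|A|:=|\mathcal P_n\cap A|$. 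Its energy is $w^2/(|A||A'|)$. The hypothesis that $v$ is adjacent to all of $H$ handles the first step in the same way, with energy $1/|H|$.

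Next I would bound the expected energy level by level. At generation $i$ below $H$ (level $j+i$) there are $2^i$ half-tiles, each carrying mass $2^{-i}$ if the tree is full. By \eqref{eq:half-tile-scales}, each has $|A|\asymp e^{(1-\alpha)h_{j+i}}$ points, with $h_{j+i}=h_j-i\log 2+O(1)$. The generation-$i$ contribution is then about
\[
2^{i+1}\cdot\frac{2^{-2(i+1)}}{e^{2(1-\alpha)h_{j+i}}}\asymp 2^{-i}\,e^{-2(1-\alpha)h_j}\,2^{2(1-\alpha)i}=e^{-2(1-\alpha)h_j}\,2^{-(2\alpha-1)i}.
\]
Because $\alpha>1/2$, this is a geometric series dominated by its \emph{last} terms only if reversed. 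Re-indexing from the terminal level $j_*$ (with $h_{j_*}=h_*+O(1)$) gives a total of order $e^{-(1-\alpha)(h_j+h_{j_*})}\cdot(\text{const})$, and this is $\lesssim 1/m$ because $m=e^{(1-\alpha)(h_j-h_{j_*})}$ and $h_{j_*}=O(1)$. I would check this exponent bookkeeping carefully. In any case, summability comes from $2\alpha-1>0$ together with the fact that point counts halve per level at rate $2^{-(1-\alpha)}$ while masses halve at rate $2^{-1}$.

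The main obstacle is the randomness: half-tiles near the bottom, where counts are $e^{(1-\alpha)h_*}$, may be empty or underpopulated. My first approach is to call a half-tile \emph{good} if its count lies within a factor $2$ of its mean. Choosing $h_*$ large makes each half-tile at depth $\ell$ above $j_*$ bad with probability at most $\exp(-c\,e^{(1-\alpha)(h_*+\ell\log2)})$, which is summable over levels. I then prune: a half-tile receives flow only if it and all its ancestors are good, and a good half-tile sends its mass to its good children, split equally among them. The set $\mathcal L_H$ consists of the surviving terminal half-tiles. The pruned tree is a branching process with offspring $\{0,1,2\}$ in which the failure probability tends to zero rapidly down the tree. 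Its survival fraction has expectation bounded below, so by Markov's inequality $Z_H\ge c2^k$ with probability bounded away from zero. That gives the required $\asymp$-positive-probability success event.

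Unequal splitting at most doubles the mass per half-tile, so on this event the energy bound above survives with a larger $C$. Since $Z_H\ge c2^k$, each surviving terminal half-tile receives mass at most $2^{-k}/c$ up to constants. I would then rescale to make the terminal masses exactly $1/Z_H$: route the flow through the tree of surviving half-tiles with mass proportional to the number of surviving terminal descendants. Each such mass is at most a constant times the full-tree mass, so the energy stays $O(1/m)$, and the divergence identity holds by construction. Finally, every quantity used depends only on Poisson points in $H$ and its descendants, so success events for disjoint descendant regions are independent by the independence of Poisson restrictions to disjoint sets.
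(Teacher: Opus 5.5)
Your proposal is correct and is essentially the paper's argument. It uses the same half-tile tree, the same product bound giving $\mathbb{E}Z_H\geq p_*2^k$ and hence $Z_H\geq c2^k$ with positive probability (your Markov step applied to $2^k-Z_H$ replaces Paley--Zygmund), and the same final routing with current $Z_H(A')/Z_H$ over complete bipartite edges. The one genuine variant is that you prune on count concentration rather than mere occupancy. This makes the energy bound deterministic on the success event and removes the paper's $\mathbb{E}[\mathbbm{1}_{\{N>0\}}/N]$ estimate and second Markov step. Two slips do not affect the conclusion: the tree part of the energy is $O(m^{-2})$, dominated by the generations nearest $H$, so the $1/m$ comes from the source edges; and your intermediate ``at most doubles'' claim for equal splitting is false, but the proportional routing supersedes it.
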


\begin{proof}
A level-$j_*$ descendant $L$ is \emph{live} if every half-tile on the unique path from $H$ to $L$ is occupied.  Let $\mathcal L_H$ be the set of live terminal descendants and, for a half-tile $A$ at or below $H$, put
\[
Z_H(A):=|\{L\in\mathcal L_H:L\text{ descends from }A\}|, \qquad Z_H:=Z_H(H).
\]

At depth $q$ below $H$, \eqref{eq:half-tile-scales} shows that each descendant half-tile has mean population $\asymp m2^{-(1-\alpha)q}$.  Read a line of descent from its terminal leaf inwards.  By taking $h_*$ large, its successive occupation probabilities are bounded below by $1-\exp\{-c2^{(1-\alpha)t}\}$, $t\geq0$, for a fixed $c>0$. These half-tiles occupy disjoint radial annuli, so the events are independent.  Hence
\[
p_*:=\prod_{t=0}^\infty \left(1-\exp\{-c2^{(1-\alpha)t}\}\right)>0
\]
is a uniform lower bound on the probability that any prescribed terminal line is live.  Thus $\e Z_H\geq p_*2^k$.  Since $0\leq Z_H\leq2^k$,
\[
\e Z_H^2\leq2^k\e Z_H.
\]
Since $p_*2^{k-1}\leq\e Z_H/2$, Paley--Zygmund gives
\begin{equation}
\p\left(Z_H\geq p_*2^{k-1}\right) \geq\p\left(Z_H\geq\frac12\e Z_H\right) \geq\frac{(\e Z_H)^2}{4\e Z_H^2} \geq\frac{p_*}{4}.                                        \label{eq:single-tile-pz}
\end{equation}

On the event $Z_H>0$, orient the half-tile tree away from $H$ and put
\[
\Gamma_H(v,x)=\frac1{|\mathcal P_n\cap H|}, \qquad x\in\mathcal P_n\cap H,
\]
and, whenever $A'$ is a child of $A$ with $Z_H(A')>0$, put
\[
\Gamma_H(x,y)=\frac{Z_H(A')} {Z_H|\mathcal P_n\cap A|\,|\mathcal P_n\cap A'|}, \qquad x\in\mathcal P_n\cap A,\ y\in\mathcal P_n\cap A'.
\]

Extend $\Gamma_H$ antisymmetrically and set it to zero on all remaining edges.  The source edges exist by hypothesis and the remaining edges by
\eqref{eq:half-tile-adjacency}.  For $x\in\mathcal P_n\cap A$, the current
entering from the preceding generation is $Z_H(A)/(Z_H|\mathcal P_n\cap A|)$, and the current leaving through the two children is
\[
\sum_{A'\text{ child of }A}\sum_{y\in\mathcal P_n\cap A'}\Gamma_H(x,y) =\frac{\sum_{A'}Z_H(A')}{Z_H|\mathcal P_n\cap A|} =\frac{Z_H(A)}{Z_H|\mathcal P_n\cap A|}.
\]

This agrees with the current arriving from $v$ at $H$, while a vertex $x\in\mathcal P_n\cap L$ in a live terminal descendant receives $1/(Z_H|\mathcal P_n\cap L|)$.  This proves the asserted divergence, and every vertex used is connected to $v$.

On the event in \eqref{eq:single-tile-pz}, the total current entering a depth-$q$ half-tile is at most $2^{1-q}/p_*$.  The complete-bipartite energy formula and
\[
\e[\mathbbm1_{\{N>0\}}/N] \leq 2\min\{\e N,(\e N)^{-1}\}
\]
for Poisson $N$ show that at each nonterminal depth $q$ the expected energy contribution, restricted to this event, is $ O\left(m^{-2}2^{-(2\alpha-1)q}\right). $

The source edges have energy
\[
\sum_{x\in\mathcal P_n\cap H}\Gamma_H(v,x)^2 =\frac1{|\mathcal P_n\cap H|},
\]
whose expected contribution on this event is $O(m^{-1})$.  At each of the bounded number of depths nearest level $j_*$, there are $O(2^k)$ half-tiles, each receiving total current $O(2^{-k})$.  Their combined energy is therefore at most $ O(2^k)O(2^{-2k})=O(2^{-k}). $

Thus
\begin{align*}
&\e\left[ \mathbbm1_{\{Z_H\geq p_*2^{k-1}\}} \frac12\sum_x\sum_{y\sim x}\Gamma_H(x,y)^2
 \right] \\
&\qquad=O\left( \frac1m+\frac1{m^2}\sum_{q\geq0}2^{-(2\alpha-1)q}+2^{-k} \right) =O(m^{-1}).
\end{align*}

Indeed, the series converges because $\alpha>1/2$, and $m\asymp2^{(1-\alpha)k}$ gives
\[
m^{-2}=O(m^{-1}),\qquad 2^{-k}=O(m^{-1}).
\]

Choose the fixed $C$ in the statement large enough that the expectation above is at most $p_*C/(8m)$.  Markov's inequality and
\eqref{eq:single-tile-pz} then give
\begin{align*}
&\p\left(Z_H\geq p_*2^{k-1},\quad
 \frac12\sum_x\sum_{y\sim x}\Gamma_H(x,y)^2\leq\frac Cm\right) \\
&\qquad\geq \p\left(Z_H\geq p_*2^{k-1}\right) -\frac mC\e\left[ \mathbbm1_{\{Z_H\geq p_*2^{k-1}\}} \frac12\sum_x\sum_{y\sim x}\Gamma_H(x,y)^2
 \right] \\
&\qquad\geq\frac{p_*}{4}-\frac{p_*}{8} =\frac{p_*}{8}.
\end{align*}

This proves the joint positive-probability assertion.  All definitions use only the Poisson points in the stated half-tiles, as asserted.
\end{proof}

The picture behind the next lemma is simple.  At each scale we choose many disjoint half-tiles whose points are adjacent to $v$.  From each successful half-tile the flow branches through occupied descendants to a fixed terminal level near the boundary, and we average the resulting flows.  The disjoint regions succeed independently; averaging lowers the energy, while spreading the terminal measure over many descendants lowers $I_\varphi$.

We do this at several initial levels and on both sides of $v$, choosing the angular intervals so that different flows meet only at $v$.  These are flows on $\cG$, but they come in two families because the final flow lives on $\cG\times\cG$: we move the first coordinate using one family and then the second coordinate using the other.  Their separated supports will in the end keep this product flow off the meeting set.

\begin{lemma}[Separated multiscale flows]                         \label{lem:separated-multiscale-flows}
Fix $\varphi<1$, $\epsilon_0>0$, a sufficiently small fixed $\epsilon>0$ and a sufficiently large fixed $\ell$.  Let $v$ be a prescribed point with
\[
h(v):=R-r_v\geq\left(\frac{1}{2}+\epsilon_0\right)R,
\]
and write $\vartheta(v)$ for its angular coordinate.  Put
\[
J:=\left\lfloor\frac{R-h(v)}{\ell}\right\rfloor, \qquad u_i:=i\ell,\quad 0\leq i\leq J.
\]

For each $i$ and $b\in\{0,1\}$, the construction uses $K_i$ initial half-tiles, where
\begin{equation}
K_i\asymp e^{h(v)/2}e^{-u_i/2}.                              \label{eq:initial-half-tile-count}
\end{equation}
There is a fixed $c>0$ such that, uniformly over the choices above, under the Palm law obtained by adding $v$ the construction for $(i,b)$ fails with probability at most $e^{-cK_i}$.  On success, whenever $v\in V(\cG)$, it produces a probability measure $\lambda_i^b$ and a unit flow $g_i^b$ on $\cG$ such that
\begin{equation}
\operatorname{div}g_i^b=\mathbbm{1}_v-\lambda_i^b,\qquad \cE_{\cG}(g_i^b)\leq\frac{C}{P_i},\qquad I_\varphi(\lambda_i^b)\leq\frac{C}{V_i},                         \label{eq:weighted-flow-families}
\end{equation}
where
\begin{equation}
P_i\asymp e^{h(v)/2}e^{-(2\alpha-1)u_i/2}, \qquad V_i\asymp e^{h(v)/2}e^{u_i/2}.                                 \label{eq:flow-family-scales}
\end{equation}

Let $Q_i^b$ be the support of $\lambda_i^b$, and let $S_i^b$ consist of $v$ and the endpoints of edges carrying nonzero $g_i^b$-flow.  Then $Q_i^b\subset S_i^b\setminus\{v\}$, every vertex in $Q_i^b$ has height $O(1)$ uniformly in $i$ and $b$, and every $x\in S_i^b\setminus\{v\}$ satisfies
\[
\operatorname{dist}_{\mathbb S^1}(\vartheta(x),\vartheta(v))\leq \epsilon,
\]
and
\[
S_i^b\cap S_j^{b'}=\{v\}\qquad\text{whenever }(i,b)\ne(j,b').
\]
\end{lemma}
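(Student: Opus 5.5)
The plan is to apply Lemma~\ref{lem:single-descendant-flow} to many disjoint initial half-tiles at each scale $u_i$ and then average the resulting flows. Fix $(i,b)$ and take the level $j(i)$ with $h_{j(i)}\approx h(v)-u_i$, i.e. half-tiles of height roughly $h(v)-u_i$. By the adjacency criterion $\theta_R$, a point at height $h(v)$ is adjacent to every point at height $h$ whose angular distance is at most of order $e^{(h(v)+h)/2-R}$ (up to constants). By \eqref{eq:half-tile-scales} a level-$j(i)$ half-tile has angular width $\asymp e^{h-R/2}$. Dividing, a window of admissible angular distance contains $K_i\asymp e^{(h(v)-h)/2}$ such half-tiles. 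With $h=h(v)-u_i$ this is $e^{u_i/2}$, not the claimed $e^{h(v)/2}e^{-u_i/2}$, so the indexing must run the other way: I would take the initial height $h=u_i$ (small $i$ is near the boundary), which gives $K_i\asymp e^{(h(v)-u_i)/2}$ as in \eqref{eq:initial-half-tile-count}.

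\textbf{Choosing the half-tiles.} For $b=0$ I place the half-tiles at angles just clockwise of $\vartheta(v)$, and for $b=1$ just counterclockwise. I drop a constant fraction of them near $\vartheta(v)$ so that descendant regions at different scales never overlap angularly. Concretely, I use for scale $i$ only the angular annulus between $\tfrac12$ and $1$ times the adjacency window at that scale. Taking $\ell$ large makes these annuli nested-disjoint across $i$, and descendants stay inside their ancestor's angular interval. This gives $S_i^b\cap S_j^{b'}=\{v\}$ and the angular bound $\le\epsilon$, because $h(v)\ge(1/2+\epsilon_0)R$ makes all windows $e^{h(v)/2+u_i/2-R/2}\cdot O(1)$ small. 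It also ensures $v$ lies in none of these regions. The terminal level is $j_*$ with $h_{j_*}=h_*+O(1)$, which gives terminal vertices of height $O(1)$ and $Q_i^b\subset S_i^b\setminus\{v\}$.

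\textbf{Concentration and averaging.} Each initial half-tile $H$ independently succeeds with probability at least $p_*/8$, since the descendant regions are disjoint. A Chernoff bound then shows that at least $cK_i$ succeed, except with probability $e^{-cK_i}$. I set $g_i^b$ to be the average of $\Gamma_H$ over the successful $H$, and $\lambda_i^b$ the corresponding average of terminal measures. Because the supports are disjoint except at $v$, the energies add with weight $1/K^2$. With $m\asymp e^{(1-\alpha)u_i}$, this gives
\[
\cE_{\cG}(g)\le \frac{C}{K_i m}\asymp e^{-h(v)/2}e^{u_i/2-(1-\alpha)u_i}=e^{-h(v)/2}e^{(2\alpha-1)u_i/2},
\]
which is $C/P_i$. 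For $I_\varphi$, $\lambda$ spreads over at least $cK_i2^{k}$ terminal half-tiles with $2^k\asymp e^{u_i}$, each carrying $\asymp 1$ point (mean population $e^{(1-\alpha)h_*}$) of degree $\ge1$ up to constants. One must guard against empty denominators and small degrees: the degree of a height-$O(1)$ vertex in the giant is at least $1$, so $\de^{1-\varphi}\ge1$ since $\varphi<1$. Then $I_\varphi\le\sum\lambda(y)^2\le C/(K_ie^{u_i})=C/V_i$. Heavy terminal populations are controlled by Poisson tails, and the Palm conditioning on $v$ changes nothing since $v$ lies outside all regions. Finally, $v\in V(\cG)$ together with the flow places every used vertex in $\cG$.

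\textbf{Main obstacle.} The main difficulty is the geometric bookkeeping. One needs a window, at each scale, that is fully adjacent to $v$, contains $\asymp K_i$ half-tiles, and stays disjoint from the other scales and sides. The hard part is showing that shrinking to sub-annuli loses only constants uniformly in $i$. I would handle this by working with explicit $\theta_R$ asymptotics from \cite[Claims~23--24]{MR4816414}.
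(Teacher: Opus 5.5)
Your construction is essentially the paper's. You take initial levels at height $h_*+u_i$ (your corrected indexing, where the offset $h_*$ is needed so that $k_i\geq0$ at $i=0$). You use about $e^{(h(v)-u_i)/2}$ half-tiles on each side of $\vartheta(v)$, in arcs that are disjoint across scales once $e^{\ell/2}$ is large. You apply Lemma~\ref{lem:single-descendant-flow} to each, use Chernoff for $\Omega(K_i)$ independent successes, and average.

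Your uniform weights work as well as the paper's weights $Z_H/\mathcal Z_i^b$, since $Z_H\asymp 2^{k_i}$ on success, and your energy and $I_\varphi$ computations are right. For $I_\varphi$ only occupancy of the terminal half-tiles is used: extra points in $L$ only decrease $\sum_{y\in L}\lambda(y)^2$, so no Poisson tail control is needed. Also, the adjacency threshold is $e^{(h(v)+h-R)/2}$, not $e^{(h(v)+h)/2-R}$; your count in fact uses the correct one.

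\textbf{The step that fails is the angular localisation.} You argue that $h(v)\geq(1/2+\epsilon_0)R$ makes all windows $e^{(h(v)+u_i-R)/2}\cdot O(1)$ small. But $u_i\leq R-h(v)$ only gives $e^{(h(v)+u_i-R)/2}\leq 1$, and at $i=J$ one has $h(v)+u_J-R\in(-\ell,0]$. So the top windows have order one for every $v$. Once $h(v)+h_{j_i}\geq R$ the threshold is $\theta_R=\pi$, and $v$ is adjacent to the whole level.

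Consequently your arcs between $\tfrac12$ and $1$ times the adjacency window do not lie within $\epsilon$ of $\vartheta(v)$. The factor-$e^{\ell/2}$ separation between consecutive scales also breaks down once the windows saturate, and the $b=0$ and $b=1$ arcs can meet on the far side of the circle. Both $\operatorname{dist}_{\mathbb S^1}(\vartheta(x),\vartheta(v))\leq\epsilon$ and $S_i^b\cap S_j^{b'}=\{v\}$ are therefore lost.

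The height hypothesis plays a different role. It guarantees that levels with $h_{j_i}=h_*+u_i+O(1)$ exist, since the hierarchy only has $h_j\leq R/2$. It also gives $K_J\geq e^{\epsilon_0R+O(1)}$, which makes the failure bound $e^{-cK_i}$ meaningful.

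The repair is what the paper does: shrink every window by the fixed small factor $\epsilon$. Take $\rho_i:=\epsilon e^{(h(v)+u_i-R)/2}\leq\epsilon<\pi/4$ and use the arcs $(8\rho_{i-1},\rho_i/8)$ and their reflections. Adjacency to $v$ of every point in such half-tiles then follows from \cite[Lemma~10]{MR4816414}, and $K_i$, $P_i$, $V_i$ change only by $\epsilon$-dependent constants. This is precisely why the statement requires $\epsilon$ to be sufficiently small.
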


\begin{proof}
Use the half-tile hierarchy specified by \eqref{eq:half-tile-levels} and the subsequent interval-bisection rule, rotated through $\vartheta(v)$, and use signed angular coordinates centred at $\vartheta(v)$, so that $v$ has angle $0$.  Choose $h_*$ sufficiently large for Lemma~\ref{lem:single-descendant-flow}.  Choose a terminal level $j_*$ with $h_{j_*}=h_*+O(1)$ and, for each $i$, an initial level $j_i\leq j_*$ with
\[
h_{j_i}=h_*+u_i+O(1).
\]
These levels exist because $u_i\leq R-h(v)\leq(1/2-\epsilon_0)R$.  If $k_i:=j_*-j_i$, then \eqref{eq:half-tile-scales} gives
\begin{equation}
k_i=\frac{u_i}{\log2}+O(1),\qquad 2^{k_i}\asymp e^{u_i}.        \label{eq:descendant-depth}
\end{equation}

A terminal descendant of a level-$j_i$ half-tile will always mean one of its $2^{k_i}$ descendants at level $j_*$.

Define
\[
\rho_i:=\epsilon e^{(h(v)+u_i-R)/2};\qquad \rho_{-1}:=0;
\]
\[
\mathcal A_i^0:=(8\rho_{i-1},\rho_i/8);\qquad \mathcal A_i^1:=(-\rho_i/8,-8\rho_{i-1}).
\]

Since $u_i\leq R-h(v)$, choosing $\epsilon$ sufficiently small gives $\rho_i\leq \epsilon<\pi/4$; \cite[Lemma~10]{MR4816414} then shows that every point of a level-$j_i$ half-tile whose angular interval is contained in $\mathcal A_i^b$ is adjacent to $v$.

A level-$j_i$ half-tile has angular width $\asymp e^{h_*+u_i-R/2}$, whereas $|\mathcal A_i^b|\asymp\rho_i$.  Hence, uniformly in $b$, the number of such half-tiles is $\asymp \rho_i/e^{h_*+u_i-R/2} \asymp e^{h(v)/2}e^{-u_i/2}$. These counts are smallest at the last scale and are at least $e^{\epsilon_0R+O(1)}$.  Let $K_i$ be the smaller of the two counts and retain $K_i$ of these half-tiles on each side.  This gives
\eqref{eq:initial-half-tile-count}.  Put
\[
m_i:=e^{(1-\alpha)u_i},\qquad P_i:=K_im_i,\qquad V_i:=K_i2^{k_i}.
\]

Equations \eqref{eq:initial-half-tile-count} and
\eqref{eq:descendant-depth} give \eqref{eq:flow-family-scales}.  Every
descendant stays in the angular interval of its initial half-tile, so the descendant regions retained for distinct pairs $(i,b)$ are disjoint, and none contains $v$.

Apply Lemma~\ref{lem:single-descendant-flow} to each retained initial half-tile; the quantity $m$ in that lemma is comparable to $m_i$.  The success events are independent because the descendant regions are disjoint.  Hence, except with probability $e^{-cK_i}$, Chernoff's inequality gives a family $\mathscr I_i^b$ of size $\Omega(K_i)$ successful initial half-tiles.  For each of them use the objects $\mathcal L_H,Z_H,\Gamma_H$ from that lemma.

Average these flows by putting
\[
\mathcal Z_i^b:=\sum_{H\in\mathscr I_i^b}Z_H,\qquad g_i^b:=\sum_{H\in\mathscr I_i^b} \frac{Z_H}{\mathcal Z_i^b}\Gamma_H,
\]
\[
\lambda_i^b:= \frac1{\mathcal Z_i^b} \sum_{H\in\mathscr I_i^b}\sum_{L\in\mathcal L_H} \mathrm{unif}_{\mathcal P_n\cap L}.
\]

Since $Z_H$ is bounded below by a fixed positive multiple of $2^{k_i}$ and above by $2^{k_i}$, $\mathcal Z_i^b\asymp K_i2^{k_i}\asymp V_i$, and the divergence in
\eqref{eq:weighted-flow-families} follows from
Lemma~\ref{lem:single-descendant-flow}.  The edge supports of the $\Gamma_H$ are disjoint, so
\[
\cE_{\cG}(g_i^b) \leq\frac{C}{m_i}\max_{H\in\mathscr I_i^b} \frac{Z_H}{\mathcal Z_i^b} \leq\frac{C}{K_im_i}=\frac{C}{P_i}.
\]

The terminal half-tiles are disjoint and every one is occupied.  Since $\varphi<1$,
\[
I_\varphi(\lambda_i^b) \leq\frac1{(\mathcal Z_i^b)^2} \sum_{H\in\mathscr I_i^b}\sum_{L\in\mathcal L_H} \frac1{|\mathcal P_n\cap L|} \leq\frac1{\mathcal Z_i^b}\leq\frac{C}{V_i}.
\]

The definition of $\lambda_i^b$ also gives $Q_i^b\subset S_i^b\setminus\{v\}$.  All terminal descendants lie at level $j_*$, which is the fixed $O(1)$-height band in the statement.  Every vertex used by $g_i^b$, apart from $v$, has angular coordinate in $\vartheta(v)+\mathcal A_i^b$.  Since $\rho_i\leq \epsilon$, this proves the angular localisation in the statement; the disjointness of the arcs proves the asserted separation of the sets $S_i^b$ and completes the proof.
\end{proof}

We shall also use the following uniform form of the point-to-point flow in
\cite[Proposition~63]{MR4816414}.

\begin{lemma}[Uniform endpoint flows]                              \label{lem:uniform-endpoint-flow}
Fix $\gamma,\zeta>0$.  With high probability, simultaneously for every pair $x,y$ with
\[
h(x),h(y)\geq\gamma R, \qquad \operatorname{dist}_{\mathbb S^1}(\vartheta(x),\vartheta(y))\geq\zeta,
\]
there is a unit flow $h_{x,y}$ on $\cG$ with divergence $\mathbbm{1}_x-\mathbbm{1}_y$ and
\begin{equation}
\cE_{\cG}(h_{x,y})\leq n^{o(1)}\left(\de(x)^{-\chi}+\de(y)^{-\chi}\right).            \label{eq:uniform-endpoint-flow}
\end{equation}
where we recall that $\chi=4\alpha(1-\alpha)$. For some $c_\gamma>0$, apart from its endpoints, the flow may be chosen to use only vertices of height at least $c_\gamma R$; in particular it is disjoint from the fixed outer height band in Lemma~\ref{lem:separated-multiscale-flows}.
\end{lemma}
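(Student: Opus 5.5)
The plan is to take the point-to-point flow of \cite[Proposition~63]{MR4816414} and upgrade it in two ways: make it hold uniformly over all admissible pairs $(x,y)$, and force it to live away from the outer band. I would build it as a concatenation of three pieces. The first is an ``ascending'' flow from $x$ up to the high region near the origin. The second is a flow through the inner core, which is a dense, nearly complete graph at small radii. The third is a ``descending'' flow from the core down to $y$. The divergence is then automatically $\mathbbm 1_x-\mathbbm 1_y$, and the energy is at most a constant multiple of the sum of the three energies. So it suffices to bound the ascending flow from a single vertex $x$ with $h(x)\geq\gamma R$ by $n^{o(1)}\de(x)^{-\chi}$, the core piece by $n^{o(1)}$ times something negligible, and to use symmetry for $y$. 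The angular separation $\zeta$ guarantees that the two ascending pieces use disjoint angular sectors until they reach the core, which simplifies the bookkeeping.

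For the ascending piece from $x$, I would mimic Lemma~\ref{lem:single-descendant-flow} in reverse. Starting at $x$, I send the unit current uniformly to all neighbours of $x$ in a half-tile one level higher. I then push the current upward through the chain of ancestor half-tiles of $x$ in the tiling \eqref{eq:half-tile-levels}, using \eqref{eq:half-tile-adjacency} to supply complete bipartite edges between consecutive levels. The energy of a complete bipartite step between tiles of populations $a$ and $b$ carrying unit current is $1/(ab)$. By \eqref{eq:half-tile-scales} the populations grow like $e^{(1-\alpha)h}$, so the sum over levels is dominated by the first step, where the tiles have height comparable to $h(x)$. That first step is where the factor $\de(x)^{-\chi}$ should come from: the first spreading step from $x$ costs about $1/\de(x)$, and the next level costs $e^{-2(1-\alpha)h(x)}$ up to polylogarithmic factors, while $\de(x)\asymp e^{h(x)/2}$ in these coordinates. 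I expect the exponent $\chi=4\alpha(1-\alpha)$ to arise exactly as in \cite{MR4816414}, by optimising where the flow leaves the degree-$\de(x)$ neighbourhood. I would simply import that computation rather than redo it. Every vertex used, apart from $x$, lies in an ancestor tile of height at least $h(x)-O(1)\geq\gamma R-O(1)$. With a margin this gives $c_\gamma=\gamma/2$, which yields the last assertion of the statement because the band in Lemma~\ref{lem:separated-multiscale-flows} has height $O(1)$.

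The main obstacle is uniformity. Proposition~63 is a statement for fixed or typical endpoints, whereas here we need all pairs simultaneously. My first attempt would be a union bound. There are $N_n=O(n)$ vertices, so $O(n^2)$ pairs, and everything is determined by occupation and population events of tiles at height at least $\gamma R-O(1)$. Each such tile has mean population at least $e^{(1-\alpha)\gamma R-O(1)}=n^{\Omega(1)}$. Poisson concentration then gives, with probability $1-\exp(-n^{\Omega(1)})$, that every such tile has population within constant factors of its mean. This kills the union bound over the polynomially many tiles, and the flows can then be defined deterministically on this good event for every pair at once. Only the single spreading step at $x$ involves $\de(x)$ itself, and that step is handled exactly, with no probabilistic input. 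The $n^{o(1)}$ slack absorbs the $O(\log n)$ levels and the polylogarithmic errors in \eqref{eq:half-tile-scales}. Finally, I would check that $x$ and $y$ lie in $\cG$: on the good event every tile near the root is populated and linked to its ancestors, so both endpoints are connected to the core.
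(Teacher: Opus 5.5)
Your overall architecture (endpoint to core, through the core, core to endpoint, importing \cite[Proposition~63]{MR4816414}) matches the paper. Your uniformity device is a legitimate alternative to the paper's Mecke/union bound over pairs: Poisson concentration for every half-tile above a fixed fraction of $R$, followed by a deterministic construction on that event. The gap is in the ascending piece.

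A flow in which every vertex other than $x$ has height at least $h(x)-O(1)$ cannot satisfy \eqref{eq:uniform-endpoint-flow}. Per unit height, $x$ has about $e^{h(x)/2}e^{(1/2-\alpha)h'}$ neighbours at height $h'$. So $x$ has only $O(e^{(1-\alpha)h(x)})$ neighbours at height $\geq h(x)-O(1)$. Cauchy--Schwarz on the edges at $x$ then forces energy at least of order $e^{-(1-\alpha)h(x)}\asymp\de(x)^{-2(1-\alpha)}$. Since $\chi-2(1-\alpha)=2(1-\alpha)(2\alpha-1)>0$ and $\de(x)\geq n^{\gamma+o(1)}$, this misses the target by a polynomial factor, not $n^{o(1)}$. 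Your own accounting shows the same thing. Spreading into one half-tile near height $h(x)$ costs $e^{-(1-\alpha)h(x)}$, not $1/\de(x)$, and no combination of the terms you list produces $\de(x)^{-\chi}$.

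The missing idea is the optimisation you defer to the reference: the current must first go \emph{down}. Sending it to the neighbours of $x$ at height $h'$ costs about $e^{-h(x)/2+(\alpha-1/2)h'}$. Reconverging those neighbours through parent half-tiles costs about $e^{-(1-\alpha)(h(x)+h')}$. The two balance at $h'=(2\alpha-1)h(x)$, giving exactly $e^{-2\alpha(1-\alpha)h(x)}=\de(x)^{-\chi}$; this is the level $\ell'_x$ in the paper's proof.

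The same cut argument shows that any flow meeting \eqref{eq:uniform-endpoint-flow} must use neighbours of $x$ at height at most $(2\alpha-1)h(x)+o(R)$. Hence your $c_\gamma=\gamma/2$ is impossible when $\alpha<3/4$, and your justification through ancestor tiles of $x$ is wrong. The height assertion has to be derived, as in the paper, from the departure level. Every non-endpoint vertex on the $x$ side lies at level $\ell'_x$ or in its ancestor half-tiles, hence at height at least $(2\alpha-1)\min\{h(x),R/(2\alpha)\}-O(1)\geq c_\gamma R$.

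You also need the cap at $R/(2\alpha)$, which your proposal does not address. The paper handles $h(x)>R/(2\alpha)$ with the comparison point $x^*$ on the same ray at height $R/(2\alpha)-1$. Angular monotonicity keeps the source edges, and Lemma~\ref{lem:uniform-degree-height} shows the substitution loses only $n^{o(1)}$.

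With these corrections your concentration argument for uniformity still goes through. Half-tiles at height $\geq(2\alpha-1)\gamma R-O(1)$ still have polynomially large mean populations, and there are only polynomially many of them.
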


\begin{proof}
For endpoints in the radial range of \cite[Proposition~63]{MR4816414}, take the flow
\[
f_{x,H_x}+f_{H_x,H_y}-f_{y,H_y}
\]
constructed in its proof.  Here the two endpoint-to-half-tile flows are those of \cite[Lemma~62]{MR4816414}, and the balanced half-tile-to-half-tile flow is the one in \cite[Remark~30]{MR4816414}.  The energy calculation in that proof gives \eqref{eq:uniform-endpoint-flow} in this range.  The maximum-height bound in the proof of Proposition~\ref{prop:giant-degree-moments} gives $h(x),h(y)\leq R/(2\alpha)+o(R)$ uniformly.  If, for example, $h(x)>R/(2\alpha)$, let $x^*$ be the point on the same ray with height $R/(2\alpha)-1$.  Use the endpoint flow defined for $x^*$, replacing $x^*$ by $x$ on its source edges.  The angular monotonicity in
\cite[Remark~9]{MR4816414} shows that all these edges are present, while
Lemma~\ref{lem:uniform-degree-height} shows that the $o(R)$ difference changes the energy by only the displayed $n^{o(1)}$ factor.  The same argument applies to $y$.

In the notation of \cite[Proposition~63]{MR4816414}, the $\ell+1$ truncation in
\cite[equation~(51)]{MR4816414} is inactive when $h(x)\geq\gamma R$.
That equation and \cite[Claim~23]{MR4816414}, also including the comparison construction just described, give
\[
R-h_{\ell'_x} =(2\alpha-1)\min\{h(x),R/(2\alpha)\}+O(1).
\]

The flow $f_{x,H_x}$ uses vertices at level $\ell'_x$ and then parent half-tiles, while $f_{H_x,H_y}$ uses ancestor half-tiles.  Thus every nonendpoint vertex on the $x$ side has at least the height in the last display, and the analogous bound holds on the $y$ side.  This is at least $c_\gamma R$.

Proposition~63 has failure probability $o(n^{-d})$ for arbitrary $d$. Taking $d>2$, Mecke's formula and a union bound over endpoint pairs give the simultaneous assertion.
\end{proof}

We now combine Lemmas~\ref{lem:separated-multiscale-flows} and~\ref{lem:uniform-endpoint-flow} in the product network.

Fix $0\leq\varphi<2-2\alpha$ and
\begin{equation}
\frac{2(1-\alpha)-\varphi}{8\alpha^2(1-\alpha)} <\beta<\frac1{2\alpha}.                                      \label{eq:beta-window}
\end{equation}

Write $h(v):=R-r_v$ and $\vartheta(v)$ for the height and angular coordinate of $v$, and put $\eta:=(\log n)^{-3}$.  Using a fixed tie-breaking rule, choose among the vertices satisfying
\[
|h(w)-\beta R|\leq1, \qquad \operatorname{dist}_{\mathbb S^1}(\vartheta(w),0)\leq\eta
\]
a vertex $w$ for which the endpoint flow of
\cite[Lemma~62]{MR4816414} and the sector edge-cut estimate of
\cite[Lemma~67]{MR4816414} hold; if there is no such choice, use the
first vertex of $\cG$ in the same order.  Put
\begin{equation}
B_w:=\bigl(\{w\}\times(V\setminus\{w\})\bigr) \cup\bigl((V\setminus\{w\})\times\{w\}\bigr),              \label{eq:product-target-set}
\end{equation}
and let $T_w^\times$ be its hitting time.  For a sufficiently large fixed $A_0$, also put
\[
a_v:=\frac{\de(v)^{2-\varphi}}{M_{2-\varphi}}, \qquad \mathcal W_n:=\left\{v: \left|h(v)-\frac{R}{2\alpha}\right|\leq A_0\log\log n\right\}.
\]

For the degree--dependent walk,
\[
\pi(v)=\frac{\de(v)^{1-\varphi}}{M_{1-\varphi}}, \qquad Q_{v,u}=\de(v)^{\varphi-1}\mathbbm{1}_{v\sim u},
\]
and hence
\[
\rho(v,u)=\frac{\mathbbm{1}_{v\sim u} \bigl(\de(v)^{1-\varphi}+\de(u)^{1-\varphi}\bigr)} {2M_{2-\varphi}}.
\]

Thus $\sum_v a_v=1$.  In the product network, identify all vertices of $\Delta:=\{(v,v):v\in V(\cG)\}$ to a single vertex, still denoted $\Delta$.  The total conductance leaving it is
\begin{equation}
\cC_\times(\Delta) =\sum_{x\in V}\pi(x)^2\,2q(x) =\frac{2M_{2-\varphi}}{M_{1-\varphi}^2}.                    \label{eq:shorted-diagonal-c}
\end{equation}

Its normalised exit law chooses an oriented edge $(v,u)$ with probability $a_v/\de(v)$ and then exchanges its coordinates with probability $1/2$; it is therefore exactly $\rho$.  The equilibrium-measure identity gives
\begin{equation}
\p_\rho(T_w^\times<T_{\rm meet}\mid\cG) =\frac{\cC_\times(\Delta\leftrightarrow B_w)} {\cC_\times(\Delta)}.                                      \label{eq:shorted-conductance-identity}
\end{equation}

\begin{proposition}[Target before remeeting]                 \label{prop:escape-to-target}
Under \eqref{eq:beta-window}, the target $w$ above satisfies
\begin{equation}
\p_\rho(T_w^\times<T_{\rm meet}\mid\cG)\pasymp1.          \label{eq:target-before-remeeting}
\end{equation}
\end{proposition}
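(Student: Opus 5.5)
By \eqref{eq:shorted-conductance-identity} and \eqref{eq:shorted-diagonal-c}, it suffices to show that, with probability at least $1-\varepsilon$, the effective resistance in the shorted product network satisfies
\[
\cR_\times(\Delta\leftrightarrow B_w)\leq C_\varepsilon\,\frac{M_{1-\varphi}^2}{M_{2-\varphi}}.
\]
The upper bound $\p_\rho(T_w^\times<T_{\rm meet}\mid\cG)\leq1$ is trivial. By Thomson's principle it is enough to exhibit a unit flow $F$ on $\cG\times\cG$ from $\Delta$ to $B_w$ with $\cE_\times(F)$ of this order. Since $\Delta$ is shorted, the source may be any probability measure on the diagonal. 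I would put all of it on a single diagonal point $(v,v)$, where $v$ is the vertex of largest degree. With probability $\geq1-\varepsilon$, this $v$ lies in $\mathcal W_n$, has $h(v)\geq(\tfrac12+\epsilon_0)R$, and satisfies $\de(v)^{2-\varphi}\geq c_\varepsilon M_{2-\varphi}$. Indeed $2-\varphi>2\alpha$, so $M_{2-\varphi}$ is dominated in probability by its few largest terms; I would take this from Proposition~\ref{prop:giant-degree-moments} and the extreme-degree estimates behind it. This single-source choice is also why the statement holds only in probability: $M_{2-\varphi}$ is a heavy-tailed extreme-value quantity. The target is therefore $\cE_\times(F)=O(M_{1-\varphi}^2\de(v)^{\varphi-2})$.

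The flow is built in two stages.

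\emph{Stage 1.} Move the second coordinate off $v$ using the family $b=1$ of Lemma~\ref{lem:separated-multiscale-flows}. Take $G:=\sum_i p_i g_i^1$ with weights $p_i\propto P_i$, summed over the scales $i$ that succeed; by Mecke's formula, failure of scale $i$ has probability $e^{-cK_i}$ and is summable. Lift it as $\mathsf L_2(\delta_v,G)$. The supports $S_i^1$ meet only at $v$, so $\cE_{\cG}(G)\leq C/\sum_iP_i\asymp e^{-h(v)/2}$. Lemma~\ref{lem:weighted-lift} gives energy
\[
M_{1-\varphi}^2\,\de(v)^{\varphi-1}\,O\bigl(e^{-h(v)/2}\bigr)=O\bigl(M_{1-\varphi}^2\de(v)^{\varphi-2}\bigr),
\]
using $\de(v)\asymp e^{h(v)/2}$ from Lemma~\ref{lem:uniform-degree-height}. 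The divergence is $\delta_v\otimes\lambda$ with $\lambda:=\sum_ip_i\lambda_i^1$. Stage~1 only visits states $(v,y)$ with $y\neq v$, so it never re-enters $\Delta$.

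\emph{Stage 2.} Move the first coordinate from $v$ to $w$ with $\mathsf L_1(h_{v,w},\lambda)$, using Lemma~\ref{lem:uniform-endpoint-flow}. This applies because $\vartheta(v)$ is uniform, so the angular separation from $w$ is at least $\zeta$ with probability $1-O(\zeta)$, and both heights are at least $\gamma R$. Its energy is
\[
M_{1-\varphi}^2\, I_\varphi(\lambda)\, n^{o(1)}\bigl(\de(v)^{-\chi}+\de(w)^{-\chi}\bigr).
\]
Convexity and the disjoint supports give $I_\varphi(\lambda)\leq\sum_ip_i^2C/V_i$. With $p_i\propto P_i$, this is dominated by the smallest scale, of order $e^{-h(v)/2}\asymp\de(v)^{-1}$. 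The $w$-term $\de(w)^{-\chi}\asymp n^{-\chi\beta}$ then dominates. The lower end of the window \eqref{eq:beta-window} is exactly what makes
\[
n^{o(1)}\de(v)^{-1}n^{-\chi\beta}=o\bigl(\de(v)^{\varphi-2}\bigr)
\]
when $\de(v)=n^{1/(2\alpha)+o(1)}$, since this reduces to $\chi\beta>(1-\varphi)/(2\alpha)\cdot\ldots$ after inserting $\chi=4\alpha(1-\alpha)$. The $n^{o(1)}$ loss is absorbed by this polynomial gap.

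\emph{Avoiding the diagonal.} Stage~2 never touches $\Delta$. Its first coordinate runs through $v$, $w$ and vertices of height at least $c_\gamma R$, while the second coordinate lies in the support of $\lambda$, which has height $O(1)$ and is angularly within $\epsilon$ of $v$. The flow stops on first reaching $\{w\}\times(V\setminus\{w\})\subset B_w$.

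The steps, in order, are: choose $v$ and control $M_{2-\varphi}/\de(v)^{2-\varphi}$; apply the Palm/Mecke version of Lemma~\ref{lem:separated-multiscale-flows} at the random vertex $v$; optimise the scale weights; do the Stage~2 energy bookkeeping; and check separation from $\Delta$.

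I expect the main obstacle to be the Stage~2 exponent check. The term $\de(w)^{-\chi}$ must be negligible against $\de(v)^{\varphi-2}$ after multiplication by $I_\varphi(\lambda)$, and this uses precisely \eqref{eq:beta-window}. If the smallest-scale bound on $I_\varphi(\lambda)$ is too weak, I would instead reweight the scales so that they are more concentrated on large $u_i$, where $V_i$ is large, trading energy between the two stages. A secondary technical point is applying Lemma~\ref{lem:separated-multiscale-flows} at the data-dependent vertex $v$ rather than at an added Palm point. I would handle this by a union bound over all vertices of $\mathcal W_n$ via Mecke's formula, using that the failure probabilities $e^{-cK_i}\leq e^{-n^{\Omega(1)}}$ are superpolynomially small.
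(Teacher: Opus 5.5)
Your Stage~1 is fine: its energy $O(M_{1-\varphi}^2\de(v)^{\varphi-2})$ matches that of the paper's first lift $\mathsf L_1(g_0^0,\mathbbm{1}_s)$. Stage~2 fails, however, and the exponent check you left as ``$\ldots$'' does not close.

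With $p_i\propto P_i$, the mixture $\lambda$ is dominated by the innermost scales. So $I_\varphi(\lambda)\asymp\de(v)^{-1}=n^{-1/(2\alpha)+o(1)}$, and the Stage~2 energy is $M_{1-\varphi}^2n^{o(1)}\de(v)^{-1}\bigl(\de(v)^{-\chi}+\de(w)^{-\chi}\bigr)$. For this to be $O(M_{1-\varphi}^2\de(v)^{\varphi-2})$ you need $\chi\beta>(1-\varphi)/(2\alpha)$ for the $w$-term and $\chi>1-\varphi$ for the $v$-term. But \eqref{eq:beta-window} only gives $\chi\beta>(2(1-\alpha)-\varphi)/(2\alpha)$. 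Since $\beta<1/(2\alpha)$, your requirement forces $\chi>1-\varphi$. This is false at $\varphi=0$ for every $\alpha\in(1/2,1)$, because $\chi=4\alpha(1-\alpha)<1$. In the main case your flow therefore has energy polynomially larger than $M_{1-\varphi}^2/\de(v)^{2-\varphi}$, and Thomson's principle yields only an escape probability $n^{-\Omega(1)}$.

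Reweighting the scales, as you suggest as a fallback, does not rescue the two-stage scheme. Keeping Stage~1 within budget means $\sum_ip_i^2/P_i=O(\de(v)^{-1})$. Since $P_i$ decays geometrically in $u_i$, Cauchy--Schwarz then puts an order-one fraction of the weight on the first $O(1)$ scales, where $V_i=O(\de(v))$, so $I_\varphi(\lambda)$ stays of order $\de(v)^{-1}$. Concentrating the weight at $u_J$ instead makes Stage~1 cost $M_{1-\varphi}^2\de(v)^{\varphi-1}/P_J=M_{1-\varphi}^2\de(v)^{\varphi-1-\chi+o(1)}$, which is again too large since $\chi<1$.

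The missing idea is the alternating lift \eqref{eq:alternating-product-flow}. The paper does not move one coordinate through all scales while the other stays at the point mass $\mathbbm{1}_s$. Instead it advances the two coordinates in turn, scale by scale, using the two families $b=0,1$. At scale $i$ the moving coordinate follows $g_i^b-g_{i-1}^b$, while the frozen one sits at $\lambda_{i-1}^1$ or $\lambda_i^0$, i.e.\ at a comparable scale. Only the first lift pays $I_\varphi(\mathbbm{1}_s)=\de(s)^{\varphi-1}$. Each later lift has $\cE_\times^{1/2}=O\bigl(M_{1-\varphi}(P_iV_{i-1})^{-1/2}\bigr)=O\bigl(M_{1-\varphi}\de(s)^{-1}e^{-(1-\alpha)u_i/2}\bigr)$, which is summable.

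This reaches $\lambda_J^0\otimes\lambda_J^1$ with $I_\varphi(\lambda_J^1)=O(n^{-1})$. Only then is the first coordinate sent to $w$, via $\mathsf L_1(-g_J^0+h_{s,w},\lambda_J^1)$. The factor $n^{-1}$ in place of your $\de(v)^{-1}$ is exactly what turns the requirement into $1+\beta\chi>(2-\varphi)/(2\alpha)$, the lower end of \eqref{eq:beta-window}. The separation $S_i^b\cap S_j^{b'}=\{s\}$ is what keeps the alternating flow off the diagonal.

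The rest of your plan matches the paper's and is fine: a single heavy source with $a_v$ bounded below in probability, angular separation from $w$, and a Mecke union bound over $\mathcal W_n$ to apply Lemma~\ref{lem:separated-multiscale-flows} at a data-dependent vertex.
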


\begin{proof}
Poisson concentration gives $n^{1-2\alpha\beta+o(1)}$ candidates for $w$.  On the common non-faulty half-tile event of
\cite[Lemma~26]{MR4816414}, the endpoint flow is available for every
candidate.  Mecke's formula applied to Lemma~67 shows that the expected number for which its edge-cut estimate fails is $o(n^{1-2\alpha\beta})$.  Thus a valid choice exists with high probability and
\begin{equation}
\de(w)=n^{\beta+o(1)}.                                      \label{eq:target-degree}
\end{equation}

In particular, $w$ is a function of $\cG$ alone and is fixed before the exit state is sampled from $\rho$.

Since $2-\varphi>2\alpha$, Proposition~\ref{prop:giant-degree-moments} gives
\[
\sum_v a_v^2 =\frac{M_{4-2\varphi}}{M_{2-\varphi}^2}\pasymp1.
\]

Lemma~\ref{lem:extreme-source-window} gives, with high probability and uniformly for $v\in\mathcal W_n$,
\begin{equation}
\sum_{x\notin\mathcal W_n}a_x=o(1), \qquad |\mathcal W_n|\leq(\log n)^{O(1)}, \qquad \de(v)\asymp e^{h(v)/2}=n^{1/(2\alpha)}(\log n)^{O(1)}.
\label{eq:extreme-source-mass}
\end{equation}

We now choose a source at the fixed positive angular separation required by the endpoint flow.  Fix $\varepsilon>0$ and choose $h_\varepsilon>0$ so that $\p(\sum_v a_v^2<h_\varepsilon)\leq\varepsilon/4+o(1)$. Rotational invariance and $\sum_v a_v^2\leq1$ give, for every fixed $0<\delta<\pi/5$,
\[
\e\!\left[\sum_{v:\,\operatorname{dist}_{\mathbb S^1} (\vartheta(v),0)\leq5\delta}a_v^2\right] =\frac{5\delta}{\pi}\e\sum_v a_v^2\leq\frac{5\delta}{\pi}.
\]

Choose a fixed $\delta>0$ so small that the sum is at most $h_\varepsilon/4$ outside an event of probability at most $\varepsilon/4+o(1)$.  Among the vertices in $\mathcal W_n$ outside this arc, choose one, denoted by $s$, with maximal $a_s$.  The squared $a$-mass outside $\mathcal W_n$ is at most the $a$-mass there, and the squared $a$-mass of any set is at most its largest weight.  Hence, outside an event of probability at most $\varepsilon+o(1)$,
\begin{equation}
a_s\geq\frac12h_\varepsilon, \qquad \de(s)=n^{1/(2\alpha)}(\log n)^{O(1)}, \qquad \operatorname{dist}_{\mathbb S^1}(\vartheta(s),\vartheta(w)) \geq4\delta.                                         \label{eq:fixed-separated-source}
\end{equation}

\emph{The product flow to the fixed target.} We use the coordinate lifts and the functional $I_\varphi$ from Lemma~\ref{lem:weighted-lift}.

Fix the auxiliary source $s$ and the target $w$ chosen above.  Their angular distance is at least $4\delta$, fixed independently of
$n$.  %

Since $h(s)=R/(2\alpha)+O(\log\log n)$, it satisfies the height hypothesis of Lemma~\ref{lem:separated-multiscale-flows} for a fixed $\epsilon_0>0$. Apply that lemma with $v=s$ and $\epsilon<\delta$.  At the final scale,
\[
K_J\asymp\frac{\de(s)^2}{n} =n^{(1-\alpha)/\alpha}(\log n)^{O(1)}.
\]

Since $K_i\geq K_J$, for this prescribed source the probability that any scale or either value of $b$ fails is at most $2(J+1)e^{-cK_J}$. Mecke's formula and a union bound over the polylogarithmically many vertices in $\mathcal W_n$, all scales and both values of $b$ show that, with high probability, all the pairs $(g_i^b,\lambda_i^b)$ required below are available.  In particular, for $u_i=i\ell$,
\[
P_i\asymp \de(s)e^{-(2\alpha-1)u_i/2}, \qquad V_i\asymp \de(s)e^{u_i/2}.
\]

Put $\lambda_{-1}^0=\lambda_{-1}^1=\mathbbm{1}_s$, $g_{-1}^0=g_{-1}^1=0$, use the conventions $Q_{-1}^b=S_{-1}^b=\{s\}$, and set $f_i^b=g_i^b-g_{i-1}^b$. The alternating product flow is
\begin{equation}
\Theta_Q:=\sum_{i=0}^J\left\{ \mathsf L_1(f_i^0,\lambda_{i-1}^1) +\mathsf L_2(\lambda_i^0,f_i^1)\right\}.                    \label{eq:alternating-product-flow}
\end{equation}

Thus the other coordinate is frozen alternately at $\lambda_{i-1}^1$ and $\lambda_i^0$, and the scale-$i$ divergence is
\[
(\lambda_{i-1}^0-\lambda_i^0)\otimes\lambda_{i-1}^1 +\lambda_i^0\otimes(\lambda_{i-1}^1-\lambda_i^1).
\]

Indeed, in the first lift the moving coordinate is supported on $S_i^0\cup S_{i-1}^0$ and the frozen coordinate on $Q_{i-1}^1\subseteq S_{i-1}^1$; in the second, the frozen coordinate is in $Q_i^0\subseteq S_i^0$ and the moving coordinate is supported on $S_i^1\cup S_{i-1}^1$.  The intersections in question are empty because the sets $Q_i^b$ do not contain $s$ and distinct $S$-sets intersect only at $s$.  The sole exception is the first lift, for which $\lambda_{-1}^1=\mathbbm{1}_s$ and the flow starts at $(s,s)$.

Consequently $\Theta_Q$ stays off the diagonal after leaving $(s,s)$, and summing the displayed divergences through scale $J$ gives
\[
\mathbbm{1}_{(s,s)}-\lambda_J^0\otimes\lambda_J^1.
\]

We have
\[
I_\varphi(\mathbbm{1}_s)=\de(s)^{-(1-\varphi)},
\]
and $P_{i-1}^{-1}=O(P_i^{-1})$ for $i\geq1$.  Hence
\[
\cE_{\cG}(f_i^b)^{1/2} \leq\cE_{\cG}(g_i^b)^{1/2}+\cE_{\cG}(g_{i-1}^b)^{1/2} =O(P_i^{-1/2}).
\]

Apply \eqref{eq:weighted-lift} to each lift and then use the triangle inequality for $\cE_\times^{1/2}$.  The definitions of $P_i,V_i$ give
\[
\cE_\times(\Theta_Q)^{1/2} =O\left(M_{1-\varphi}\left\{\frac1{\de(s)^{1-\varphi/2}} +\frac1{\de(s)}\sum_{i\geq0}e^{-(1-\alpha)u_i/2}\right\}\right) =O\left(\frac{M_{1-\varphi}}{\de(s)^{1-\varphi/2}}\right),
\]
where the geometric sum is bounded and the last inequality uses $\varphi\geq0$.  Consequently
\begin{equation}
\cE_\times(\Theta_Q) =O\left(\frac{M_{1-\varphi}^2}{\de(s)^{2-\varphi}}\right).
\label{eq:alternating-product-energy}
\end{equation}

At the final scale $u_J=R-h(s)+O(1)$ and $V_J\asymp n$.  Moreover,
\begin{equation}
I_\varphi(\lambda_J^0),I_\varphi(\lambda_J^1)=O(n^{-1}),\qquad P_J=\de(s)^{\chi+o(1)}.                                      \label{eq:weighted-final-scale}
\end{equation}

Lemma~\ref{lem:uniform-endpoint-flow} gives a point-to-point flow $h_{s,w}$, available because of the fixed separation in
\eqref{eq:fixed-separated-source}, with
\[
\operatorname{div}h_{s,w}=\mathbbm{1}_s-\mathbbm{1}_w, \qquad \cE_{\cG}(h_{s,w}) =O\left(\de(s)^{-\chi+o(1)}+\de(w)^{-\chi+o(1)}\right).
\]

All its nonendpoint vertices have height at least a positive constant times $R$, and hence are disjoint from $Q_J^1$, which lies in the fixed outer height band of Lemma~\ref{lem:separated-multiscale-flows}; the fixed angular separation also gives $w\notin Q_J^1$.  The support separation in that lemma makes the support of $g_J^0$ disjoint from $Q_J^1$ as well.  Define the complete product flow by
\begin{equation}
\Theta:=\Theta_Q+ \mathsf L_1(-g_J^0+h_{s,w},\lambda_J^1).                    \label{eq:complete-product-flow}
\end{equation}

Indeed,
\[
\operatorname{div}_{\cG}(-g_J^0+h_{s,w}) =-(\mathbbm{1}_s-\lambda_J^0) +(\mathbbm{1}_s-\mathbbm{1}_w) =\lambda_J^0-\mathbbm{1}_w,
\]
and hence
\[
\operatorname{div}_\times\Theta =\mathbbm{1}_{(s,s)}-\lambda_J^0\otimes\lambda_J^1 +(\lambda_J^0-\mathbbm{1}_w)\otimes\lambda_J^1 =\mathbbm{1}_{(s,s)}-\mathbbm{1}_w\otimes\lambda_J^1.
\]

Thus \eqref{eq:complete-product-flow} is a unit flow to $\{w\}\times Q_J^1\subset B_w$, and the support separation just noted shows that its added lift avoids the diagonal.  Moreover,
\[
\cE_{\cG}(-g_J^0+h_{s,w}) =O\left(\de(s)^{-\chi+o(1)} +\de(w)^{-\chi+o(1)}\right),
\]
so \eqref{eq:weighted-lift} and \eqref{eq:weighted-final-scale} give
\begin{align*}
\cE_\times(\Theta-\Theta_Q) &=M_{1-\varphi}^2I_\varphi(\lambda_J^1)
   \cE_{\cG}(-g_J^0+h_{s,w})\\
&=O\left(\frac{M_{1-\varphi}^2}{n} \left(\de(s)^{-\chi+o(1)}+\de(w)^{-\chi+o(1)}\right)\right).
\end{align*}

The source term is $O(M_{1-\varphi}^2/\de(s)^{2-\varphi})$ for $\varphi\geq0$. The target term has this order when
\[
1+\beta \cdot \chi>\frac{2-\varphi}{2\alpha},
\]
which is exactly $\beta>\frac{2(1-\alpha)-\varphi}{8\alpha^2(1-\alpha)}$.  Under this condition, \eqref{eq:alternating-product-energy} and the triangle inequality give
\[
\cE_\times(\Theta)^{1/2} \leq\cE_\times(\Theta_Q)^{1/2} +\cE_\times(\Theta-\Theta_Q)^{1/2} =O\left(\frac{M_{1-\varphi}} {\de(s)^{1-\varphi/2}}\right),
\]
and therefore
\[
\cE_\times(\Theta) =O\left(\frac{M_{1-\varphi}^2}{\de(s)^{2-\varphi}}\right).
\]

Thomson's principle, \eqref{eq:shorted-diagonal-c}, and
\eqref{eq:shorted-conductance-identity} therefore give, for some
$c_\varepsilon>0$,
\begin{equation}
\p_\rho(T_w^\times<T_{\rm meet}\mid\cG) \geq c_\varepsilon a_s.                                      \label{eq:rho-escape-to-w}
\end{equation}

Together with \eqref{eq:fixed-separated-source}, this gives the lower bound in \eqref{eq:target-before-remeeting} outside a set of graphs of probability at most $\varepsilon+o(1)$.  The upper bound is immediate, so the proposition follows.
\end{proof}

For a fixed graph, recall that
\begin{equation}
R_\varphi(v):= \inf\left\{\cE_{\cG}(f): f\text{ has divergence }\mathbbm{1}_v-\pi\right\}.
\label{eq:stationary-flow-energy}
\end{equation}

\begin{theorem}[Mixing before remeeting]                       \label{thm:escape-hrg}
Under \eqref{eq:beta-window}, let $\kappa$ satisfy
\begin{equation}
2\alpha-1<\kappa<1-\chi\beta,                                \label{eq:kappa-window}
\end{equation}
Then
\begin{equation}
\p_\rho\left(T_{\rm mix}<n^\kappa<T_{\rm meet}\,\middle|\,\cG\right) \pasymp1.                                                     \label{eq:escape-positive}
\end{equation}
\end{theorem}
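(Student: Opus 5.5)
The plan is to combine Proposition~\ref{prop:escape-to-target} with two time estimates. First, the product chain started from $\rho$ does not reach $B_w$ before time $n^\kappa$, except on a small set of paths. Second, once in $B_w$, the chain typically survives without meeting for a further time of order $n^\kappa$. The strong stationary time $T_{\rm mix}$ is handled separately, using Corollary~\ref{cor_comparison} and the lower bound $\kappa>2\alpha-1$.

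Concretely, we write
\[
\p_\rho(T_{\rm mix}<n^\kappa<T_{\rm meet})\geq \p_\rho(T_w^\times<T_{\rm meet},\ n^\kappa<T_{\rm meet})-\p_\rho(T_{\rm mix}\geq n^\kappa).
\]
The last term is at most $\e T_{\rm mix}/n^\kappa=\widetilde O(n^{2\alpha-1-\kappa})=o(1)$ with high probability, by Corollary~\ref{cor_comparison} and Markov's inequality. For the first term we use the strong Markov property at $T_w^\times$. On $\{T_w^\times<T_{\rm meet}\}$ the chain sits at some $(w,y)$ or $(y,w)$ with $y\neq w$, so it suffices to show that
\[
\min_{y\neq w}\p_{(w,y)}(T_{\rm meet}>n^\kappa)\geq c
\]
for a fixed $c>0$, with high probability. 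Multiplying this by \eqref{eq:target-before-remeeting} then gives \eqref{eq:escape-positive}. The minimum over all $y$ may be too strong, since $y$ could be adjacent to $w$. In that case I would weaken it to an average over the hitting distribution on $B_w$. If that is still awkward, I would instead replace $B_w$ by a set $\{w\}\times Q$, where $Q$ is the outer-band support $Q_J^1$ from the flow construction. This uses the fact that the flow $\Theta$ terminates on $\mathbbm 1_w\otimes\lambda_J^1$, so the escape bound transfers to the restricted set. Since $Q$ is far from $w$ in angle and lies at height $O(1)$, its vertices are not adjacent to $w$.

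The survival estimate from $(w,y)$ is the main obstacle. The idea is that the walker at $w$ has degree $n^{\beta+o(1)}$ and is hard to catch. The meeting rate with a walker at $w$ is governed by the walker at $y$ hitting $w$, or the reverse, within time $n^\kappa$. I would bound
\[
\p_{(w,y)}(T_{\rm meet}\leq n^\kappa)\leq \p_y(T_w\leq 2n^\kappa)+\p(\text{the } w\text{-walker and the } y\text{-walker meet elsewhere before } n^\kappa).
\]
To control $\p_y(T_w\leq t)$, I would use the sector edge-cut estimate of \cite[Lemma~67]{MR4816414}, which was imposed in the choice of $w$. It should give an effective resistance between $w$ and the outer band of order at least $\de(w)^{-\chi+o(1)}\cdot n^{-o(1)}$, or, more to the point, a stationary entrance flux into $w$ of order $\pi(w)q(w)$. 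Combining the commute-time identity \eqref{eq:weighted-commute} with the lower resistance bound should give
\[
\e_\pi T_w\gtrsim M_{1-\varphi}\cR_{\cG}(w\leftrightarrow\pi)\geq n^{1-\chi\beta-o(1)}.
\]
The condition $\kappa<1-\chi\beta$ is exactly what makes $n^\kappa$ negligible relative to this. From a non-stationary start in the outer band, I would first let the walker mix in time $\widetilde O(n^{2\alpha-1})\ll n^\kappa$ and then apply the stationary hitting approximation of \cite[Proposition~3.23]{aldous-fill-2014}. This gives $\p(T_w\leq 2n^\kappa)=o(1)$, provided the walker does not hit $w$ during the mixing phase; that last point needs the cut estimate in a local form. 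Meetings away from $w$ are handled the same way using the general meeting-time bound $t^\pi_{\rm meet}\gg n^\kappa$ and stationarity after $t_{\rm mix}$, with the pre-mixing window controlled as in \eqref{eq:two-block-survival}. The step I expect to be genuinely delicate is the short-time, pre-mixing control of hitting $w$ from a nearby start. For that I would try to restrict to the separated start set $Q$ described above.
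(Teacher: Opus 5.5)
\textbf{There is a genuine gap in your survival step.}

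After applying the strong Markov property at $T_w^\times$, you need $\p_{(w,y)}(T_{\rm meet}>n^\kappa)\geq c$. But the walker at $w$ leaves $w$ at rate $\de(w)^\varphi\geq1$, so meeting is not governed by the other walker hitting $w$. The two walkers can meet anywhere, for instance at the few top-degree vertices on which $\sum_v q(v)\pi(v)^2$ concentrates.

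Ruling out a meeting from $(w,y)$ during $[0,T_{\rm mix}]$ is therefore a fresh escape problem of exactly the kind the theorem is meant to solve. Your appeal to \eqref{eq:two-block-survival} for this pre-mixing window is circular. That display only controls visits to the diagonal after $T_{\rm mix}$, and its right-hand side contains $\p_\rho(T_{\rm mix}<t<T_{\rm meet})$. That is condition~\ref{cond:clumping-escape}, i.e.\ the present statement.

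Moving the start to $\{w\}\times Q_J^1$ makes the remaining estimate harder, not easier. The excursion-counting bound available here gives a per-excursion probability of reaching $w$ of order $\de(w)^\chi/\de(y)$ up to polylogarithmic factors. This is vacuous when $\de(y)=O(1)$.

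\textbf{The missing idea.} Use the constraint $\kappa<1-\chi\beta$ to show that $B_w$ is reached \emph{late} from the start law $\rho$, rather than that the chain survives after reaching it. If $\p_\rho(T_w^\times\leq n^\kappa\mid\cG)=o(1)$, then
$\{T_w^\times<T_{\rm meet}\}\setminus\{T_w^\times\leq n^\kappa\}\subseteq\{n^\kappa<T_{\rm meet}\}$.
Proposition~\ref{prop:escape-to-target} alone then gives $\p_\rho(n^\kappa<T_{\rm meet}\mid\cG)\geq c_\varepsilon-o(1)$. Nothing after $T_w^\times$ needs controlling, because the electrical argument already excludes meeting before $T_w^\times$.

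The paper proves this late-hitting bound with a single-walker estimate from $\rho$-typical sources:
\begin{itemize}
\item $\rho$ gives mass $1-o(1)$ to pairs with one endpoint $v\in\mathcal W_n$ at angular distance more than $3\eta$ from $w$. This uses Lemma~\ref{lem:extreme-source-window} and rotational invariance.
\item For such $v$, the Green identity and $\e_\pi T_v=M_{1-\varphi}R_\varphi(v)$ bound the expected number of departures from $v$ by time $t$ by $\de(v)R_\varphi(v)+t\de(v)/M_{1-\varphi}$.
\item The final-scale flow of Lemma~\ref{lem:separated-multiscale-flows} gives $R_\varphi(v)\leq\de(v)^{-\chi}(\log n)^{O(1)}$. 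It is completed by a Laplacian correction $\nabla\psi$ whose energy is at most $t_{\rm rel}(\varphi)\cdot O(n^{-1})$.
\item Each excursion from $v$ reaches $w$ with probability $1/(\de(v)\cR_{\cG}(v\leftrightarrow w))$. The sector cut bounds this by $(\de(w)^\chi/\de(v))(\log n)^{O(1)}$.
\end{itemize}
Together these give $\p_\rho(T_w^\times\leq t\mid\cG)\lesssim(\de(w)/\de(v))^\chi+t\de(w)^\chi/n$ up to polylogarithmic factors. At $t=n^\kappa$ this is $o(1)$ precisely because $\beta<1/(2\alpha)$ and $\kappa<1-\chi\beta$.

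Your lower bound $\e_\pi T_w\geq n^{1-\chi\beta-o(1)}$ points at the right scale. It has to be spent on the hitting time of $B_w$ from $\rho$, where the sources have degree $n^{1/(2\alpha)+o(1)}$ and the excursion bound is effective. Your treatment of $T_{\rm mix}$ via Corollary~\ref{cor_comparison} is fine.
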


\begin{proof}
On the high-probability event in which a valid target candidate exists, the selected target lies within $\eta$ of angle $0$, and hence
\[
\sum_{v:\,\operatorname{dist}_{\mathbb S^1} (\vartheta(v),\vartheta(w))\leq3\eta}a_v \leq \sum_{v:\,\operatorname{dist}_{\mathbb S^1} (\vartheta(v),0)\leq4\eta}a_v.
\]

By rotational invariance, the expectation of the last sum is $O(\eta)$.  Markov's inequality therefore gives, with high probability,
\begin{equation}
\sum_{v:\,\operatorname{dist}_{\mathbb S^1} (\vartheta(v),\vartheta(w))\leq3\eta}a_v=o(1).                \label{eq:nearby-diagonal-mass}
\end{equation}

Let $L_{\cG}$ be the positive unweighted graph Laplacian,
\[
L_{\cG}f(x):=\sum_{y\sim x}(f(x)-f(y)).
\]

For an oriented edge, write $\nabla f(x,y):=f(x)-f(y)$, so that $\operatorname{div}_{\cG}\nabla f=L_{\cG}f$.  Put $H_v(x):=\e_xT_v$.  For $x\neq v$, the hitting equation reads $-1=\sum_yQ_{xy}(H_v(y)-H_v(x))$; hence \eqref{eq:voter-kernel} gives $L_{\cG}H_v(x)=\de(x)^{1-\varphi}$.  Since $\sum_xL_{\cG}H_v(x)=0$, its value at $v$ is $\de(v)^{1-\varphi}-M_{1-\varphi}$.  Thus, for every $x$,
\[
L_{\cG}H_v(x)=\de(x)^{1-\varphi} -M_{1-\varphi}\mathbbm{1}_{\{x=v\}} =-M_{1-\varphi}\bigl(\mathbbm{1}_{\{x=v\}}-\pi(x)\bigr).
\]

Thus $-M_{1-\varphi}^{-1}\nabla H_v$ has divergence $\mathbbm{1}_v-\pi$.  By Thomson's principle it is the minimum-energy flow with that divergence, and summation by parts, using $H_v(v)=0$, gives $\e_\pi T_v=M_{1-\varphi}R_\varphi(v)$.  Writing $p_t(x,y)$ for the transition probabilities, the reversible spectral representation and Green identity \cite[Lemma~2.11 and (3.40)--(3.41)]{aldous-fill-2014} now give
\begin{equation}
\int_0^t p_u(v,v)\de u \leq t\pi(v)+\pi(v)\e_\pi T_v =\frac{t\de(v)^{1-\varphi}}{M_{1-\varphi}} +\de(v)^{1-\varphi}R_\varphi(v).                           \label{eq:weighted-green}
\end{equation}

We next bound $R_\varphi(v)$ uniformly over $v\in\mathcal W_n$.  Apply Lemma~\ref{lem:separated-multiscale-flows} at its final scale $J=J(v)$, and take $g_J^0$.  Uniformly, this flow has divergence $\mathbbm{1}_v-\lambda_J^0$, energy $\de(v)^{-\chi}(\log n)^{O(1)}$, and $I_\varphi(\lambda_J^0)=O(n^{-1})$.  It is not yet admissible in the infimum in \eqref{eq:stationary-flow-energy}, because its sink is $\lambda_J^0$ rather than $\pi$.  Since $\lambda_J^0-\pi$ has total mass zero, let $\psi$ be the $\pi$-mean-zero solution of
\[
L_{\cG}\psi=\lambda_J^0-\pi.
\]

Then $\operatorname{div}_{\cG}\nabla\psi=\lambda_J^0-\pi$, so $g_J^0+\nabla\psi$ has divergence $\mathbbm{1}_v-\pi$.  Moreover, weighted Cauchy--Schwarz and the variational definition of $t_{\rm rel}(\varphi)$ \cite[(3.74) and Theorem~3.25]{aldous-fill-2014} give
\begin{align*}
\cE_{\cG}(\nabla\psi)
 &=\sum_x(\lambda_J^0(x)-\pi(x))\psi(x)\\
&\leq\left(\sum_x \frac{(\lambda_J^0(x)-\pi(x))^2}{\de(x)^{1-\varphi}}\right)^{1/2}
 \left(M_{1-\varphi}\var_\pi(\psi)\right)^{1/2}\\
&\leq\left(t_{\rm rel}(\varphi)\cE_{\cG}(\nabla\psi) \sum_x\frac{(\lambda_J^0(x)-\pi(x))^2} {\de(x)^{1-\varphi}}\right)^{1/2}.
\end{align*}

Consequently
\[
\cE_{\cG}(\nabla\psi)\leq t_{\rm rel}(\varphi)\sum_x \frac{(\lambda_J^0(x)-\pi(x))^2}{\de(x)^{1-\varphi}}.
\]

Now $M_{1-\varphi}\whpasymp n$ by Proposition~\ref{prop:giant-degree-moments}, and, with high probability, uniformly over this window,
\[
\sum_x\frac{(\lambda_J^0(x)-\pi(x))^2}{\de(x)^{1-\varphi}} \leq 2I_\varphi(\lambda_J^0)+\frac2{M_{1-\varphi}}=O(n^{-1}).
\]

The comparison preceding Corollary~\ref{cor_comparison} gives, with high probability, $t_{\rm rel}(\varphi)=\widetilde O(n^{2\alpha-1})$ for $\varphi\geq0$.  Thus the continuation has energy $\widetilde O(n^{2\alpha-2})$, which is $\de(v)^{-\chi}(\log n)^{O(1)}$ uniformly on $\mathcal W_n$ by
\eqref{eq:extreme-source-mass}.  The triangle inequality for
$\cE_{\cG}^{1/2}$ applied to $g_J^0+\nabla\psi$ therefore gives
\begin{equation}
R_\varphi(v) \leq \de(v)^{-\chi}(\log n)^{O(1)}, \qquad v\in\mathcal W_n.                                          \label{eq:green-extreme-window}
\end{equation}

The cut included in the selection of $w$ lies in an angular aperture $n^{2\alpha\beta-1}(\log n)^{O(1)}=o(\eta)$.  Hence, simultaneously for every $v$ at angular distance at least $2\eta$ from $w$,
\cite[Lemma~67]{MR4816414} and the one-cut bound from the Dirichlet
principle \cite[Proposition~3.38]{aldous-fill-2014} give
\[
\cR_{\cG}(v\leftrightarrow w) \geq \de(w)^{-\chi}(\log n\log\log n)^{-3+o(1)}.
\]

The embedded jump chain is simple random walk, so an excursion from $v$ hits $w$ before returning to $v$ with probability $1/(\de(v)\cR_{\cG}(v\leftrightarrow w))$.  Since departures from $v$ occur at rate $\de(v)^\varphi$, \eqref{eq:weighted-green} gives
\[
\e_v\bigl[\text{departures from $v$ by time $t$}\bigr] \leq \de(v)R_\varphi(v)+\frac{t\de(v)}{M_{1-\varphi}}.
\]

A union bound over excursions, followed by the strong Markov property for the walker started from a uniform neighbour $U_v$ of $v$, yields, with high probability and uniformly for $v\in\mathcal W_n$ outside the $3\eta$-neighbourhood of $w$,
\begin{equation}
\p_{v,U_v}(T_w^\times\leq t) \leq (\log n)^{O(1)}\left\{\left(\frac{\de(w)}{\de(v)}\right)^\chi +\frac{t\de(w)^\chi}{n}\right\}+o(1).                              \label{eq:early-w-varphi}
\end{equation}

Since $T_w^\times$ is invariant under exchanging the coordinates, we average this estimate over $v$ with weights $a_v$ and over its uniform neighbour $U_v$.  Sources within angular distance $3\eta$ of $w$ have total $a$-mass $o(1)$ with high probability by
\eqref{eq:nearby-diagonal-mass}; sources outside $\mathcal W_n$ have total
$a$-mass $o(1)$ with high probability by
\eqref{eq:extreme-source-mass}.  On the remaining sources,
$\de(v)=n^{1/(2\alpha)}(\log n)^{O(1)}$ uniformly.  Taking $t=n^\kappa$ in \eqref{eq:early-w-varphi} and using \eqref{eq:target-degree} therefore gives, with high probability,
\begin{equation}
\p_\rho(T_w^\times\leq n^\kappa\mid\cG) \leq o(1) +n^{\chi(\beta-1/(2\alpha))+o(1)} +n^{\kappa+\beta\chi-1+o(1)} =o(1),                                                        \label{eq:early-w-zero}
\end{equation}
by \eqref{eq:beta-window} and \eqref{eq:kappa-window}.  This uses the stationary-resistance estimate only on the extreme window to which Lemma~\ref{lem:separated-multiscale-flows} applies.

Proposition~\ref{prop:escape-to-target} and
\eqref{eq:early-w-zero} concern the same target $w$, fixed before the
exit state was sampled.  Thus, for every $\varepsilon>0$, there is $c_\varepsilon>0$ such that
\[
\p_\rho(n^\kappa<T_w^\times<T_{\rm meet}\mid\cG) \geq \p_\rho(T_w^\times<T_{\rm meet}\mid\cG) -\p_\rho(T_w^\times\leq n^\kappa\mid\cG) \geq c_\varepsilon
\]
outside a set of graphs of probability at most $\varepsilon+o(1)$.

For an initial law $\lambda$ on $V^2$, write
\[
s_\lambda(t):=1-\min_{z\in V^2} \frac{\p_\lambda(Z_t=z)}{(\pi\otimes\pi)(z)}
\]
for the product-chain separation distance.

The relaxation-time comparison and $\kappa>2\alpha-1$ imply, with high probability, the uniform separation bound
\[
s_\rho(n^\kappa)=o(1).
\]

For the optimal strong stationary time in the definition of $T_{\rm mix}$, with high probability,
\[
\p_\rho(T_{\rm mix}\geq n^\kappa\mid\cG) =s_\rho(n^\kappa)=o(1).
\]

Combining the last two estimates, we conclude that
\[
\p_\rho(T_{\rm mix}<n^\kappa<T_{\rm meet}\mid\cG) \geq c_\varepsilon/2
\]
outside a set of graphs of probability at most $\varepsilon+o(1)$, as claimed.
\end{proof}

\subsection{Critical escape}

In this section we consider $\varphi=2-2\alpha$. Recall that $h(v):=R-r_v$ is the distance from the boundary, and $\chi=4\alpha(1-\alpha)$. Recall $B_w$ from \eqref{eq:product-target-set} and $\eta=(\log n)^{-3}$, and let $T_w^\times$ again denote the hitting time of $B_w$.

At $\varphi=2-2\alpha$ the probability to meet at a vertex $v$ is proportional to $\de(v)^{2\alpha}$, which is the critical parameter for the empirical degree moment of the giant.  Therefore the sum of $\de(v)^{2\alpha}$ is no longer dominated by the largest $O(1)$ terms, and we must instead combine a polynomial number of possible meeting positions to collect a positive probability event.

Fix a closed angular arc $\mathcal I$ of positive length at fixed positive distance from angle $0$.  For $0<\delta<1-\alpha$, define
\[
k_n:=\lfloor n^\delta\rfloor, \qquad j_n:=\lceil(\log n)^2\rceil, \qquad H_n:=\sum_{j=j_n}^{k_n}\frac1j\asymp\log n,
\]
and list the vertices in $\mathcal I$ in decreasing degree order as $v_1,v_2,\ldots$.

\begin{lemma}[Polynomial source family]                         \label{lem:polynomial-sources}
Take $\epsilon$ in Lemma~\ref{lem:separated-multiscale-flows} smaller than the fixed angular separation between $\mathcal I$ and the target arc.  With high probability, its conclusions then hold simultaneously for every $v_j$, $j_n\leq j\leq k_n$.  If $J=J(j)$ is the final scale for $v_j$, then
\begin{align}
 K_J(v_j)&\asymp n^{(1-\alpha)/\alpha}j^{-1/\alpha},             \label{eq:critical-initial-count}\\
P_{J,j}&:=\de(v_j)e^{-(2\alpha-1)(R-h(v_j))/2} \asymp\frac{n^{2-2\alpha}}j.                          \label{eq:critical-final-scale}
\end{align}
\end{lemma}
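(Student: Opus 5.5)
The plan is to first pin down the heights (equivalently degrees) of the ranked vertices $v_j$, $j_n\leq j\leq k_n$, and then plug these into the scale formulas \eqref{eq:initial-half-tile-count} and \eqref{eq:flow-family-scales} at the final scale $u_J=R-h(v_j)+O(1)$.

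\textbf{Step 1: heights of the ranked vertices.} The expected number of points of $\mathcal P_n$ in $\mathcal I$ with height at least $h$ is $\asymp n e^{-\alpha h}$, since the radial density near the boundary is $\alpha e^{-\alpha(R-r)}$ up to constants. The count is Poisson, so the rank-$j$ height satisfies
\[
e^{-\alpha h(v_j)}\asymp j/n .
\]
This holds simultaneously for all $j\geq j_n=\lceil(\log n)^2\rceil$ with high probability. To see this, apply a Chernoff bound to the Poisson counts at the thresholds corresponding to each $j$ and take a union bound; the means are at least $(\log n)^2$, so the relative error is $O(1/\log n)$ uniformly. Combining this with $e^{R/2}=n/\nu$ gives $e^{h(v_j)/2}\asymp (n/j)^{1/(2\alpha)}$.

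We then need $\de(v_j)\asymp e^{h(v_j)/2}$ uniformly in $j$. I would take this from Lemma~\ref{lem:uniform-degree-height}, or else from Poisson concentration of the degree given the height. That concentration is available because the expected degree is at least $(n/k_n)^{1/(2\alpha)}\geq n^{(1-\delta)/(2\alpha)}$, which is polynomially large. The same bound shows $h(v_j)\geq (1-\delta)R/(2\alpha)\geq(\tfrac12+\epsilon_0)R$ for a fixed $\epsilon_0>0$, since $\delta<1-\alpha$. This is exactly the height hypothesis of Lemma~\ref{lem:separated-multiscale-flows}. The choice of $\epsilon$ smaller than the separation of $\mathcal I$ from the target arc only keeps the supports localised and plays no role in the estimates.

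\textbf{Step 2: simultaneous validity.} For a prescribed $v$, the lemma fails at scale $(i,b)$ with probability at most $e^{-cK_i}$, and $K_i\geq K_J$. The estimate $K_J\asymp e^{h/2}e^{-(R-h)/2}=e^{h-R/2}$ gives
\[
K_J\asymp (n/j)^{1/\alpha}\,n^{-1}= n^{(1-\alpha)/\alpha}j^{-1/\alpha},
\]
which is \eqref{eq:critical-initial-count}. For $j\leq n^\delta$ this is at least $n^{(1-\alpha-\delta)/\alpha}$, a positive power of $n$. To make the statement simultaneous, apply Mecke's formula to the points of $\mathcal I$ whose height exceeds the rank-$k_n$ threshold; their expected number is $O(n^\delta)$. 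A union bound over these points, the $O(\log n)$ scales, and $b\in\{0,1\}$ then gives total failure probability $O(n^\delta\log n\,e^{-cn^{(1-\alpha-\delta)/\alpha}})=o(1)$. This is the same argument already used for $\mathcal W_n$ in Proposition~\ref{prop:escape-to-target}. The Palm conditioning is harmless because the construction uses only the Poisson points in the descendant half-tiles.

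\textbf{Step 3: the final scale.} \eqref{eq:flow-family-scales} gives $P_J\asymp e^{h/2}e^{-(2\alpha-1)u_J/2}$ with $u_J=R-h+O(1)$, which is the quantity defined as $P_{J,j}$. Substituting $e^{h/2}\asymp(n/j)^{1/(2\alpha)}$ and $e^{R/2}\asymp n$ yields
\[
P_{J,j}\asymp e^{\alpha h - (2\alpha-1)R/2}\asymp (n/j)\,n^{-(2\alpha-1)}= n^{2-2\alpha}/j ,
\]
which is \eqref{eq:critical-final-scale}.

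\textbf{Main obstacle.} The step needing care is the uniform control of Step 1 over a polynomially long range of ranks, together with replacing $\de(v_j)$ by $e^{h(v_j)/2}$. Everything else is substitution into the existing lemma. Starting at $j_n=(\log n)^2$ rather than $j=1$ is exactly what makes the rank-to-height concentration hold up to constants uniformly.
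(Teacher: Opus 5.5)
Your Steps 2 and 3 follow the paper. Once $h(v_j)=\alpha^{-1}\log(n/j)+O(1)$ and $\de(v_j)\asymp e^{h(v_j)/2}$ are known, the paper also substitutes $u_J=R-h(v_j)+O(1)$ into \eqref{eq:initial-half-tile-count} and into the definition of $P_{J,j}$. It also makes the construction simultaneous by Mecke's formula over all points above a deterministic height cutoff; your $O(n^\delta)$ count of such points is sharper than the paper's crude factor $n$, and either works.

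\textbf{The gap is in Step 1.} The $v_j$ are listed in decreasing \emph{degree} order, but your Poisson-count argument identifies the height of the $j$-th \emph{highest} point of $\mathcal I$. The relation $\de(v)\asymp e^{h(v)/2}$ holds only up to multiplicative constants, and not at all near the boundary, so the degree ranking can reorder vertices. You never show that the $j$-th largest degree sits at height $\alpha^{-1}\log(n/j)+O(1)$. Step 2 inherits the problem: to know every $v_j$ with $j\le k_n$ lies above your deterministic Mecke cutoff, you need a lower bound on the heights of the degree-ranked vertices. Your remark that the expected degree is at least $(n/k_n)^{1/(2\alpha)}$ presupposes that bound.

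Supplying this conversion is exactly the content of Lemma~\ref{lem:sector-critical-orders}, which the paper's proof invokes. It needs three ingredients.
\begin{enumerate}[label=(\roman*)]
\item A two-sided comparison $c_de^{h(v)/2}\le\de(v)\le C_de^{h(v)/2}$ with fixed constants, uniformly over all vertices above height $2\log R+\omega(1)$. Lemma~\ref{lem:uniform-degree-height} as stated only gives $e^{h(v)/2+o(R)}$. That loses $n^{o(1)}$ factors and cannot yield $\asymp$. Your alternative, Poisson concentration of the Palm degree, does work if applied via Mecke to all points above a deterministic height.
\item A separate bound showing that vertices below that height have degree $(\log n)^{1+o(1)}$, so they cannot occur among the top $n^\delta$ degrees.
\item A sandwich argument. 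Choose $D$ with $C_de^{-D/2}<c_d$. Then every vertex of degree at least $\de(v_j)$ has height above $h(v_j)-D$, and every vertex above $h(v_j)+D$ has strictly larger degree. Combined with the height counts $A_{\mathcal I}(t_j\pm L)\asymp je^{\mp\alpha L}$ for a suitably large fixed $L$, this forces $|h(v_j)-t_j|\le L$.
\end{enumerate}
With this inserted, the rest of your proposal goes through.

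A minor point: with Poisson means at least $(\log n)^2$ and a union bound, Chernoff gives relative error $O((\log n)^{-1/2})$, not $O(1/\log n)$. This is harmless, since constant-factor accuracy is all that is needed.
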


\begin{proof}
By Lemma~\ref{lem:sector-critical-orders}, for some fixed $A<\infty$, with high probability every relevant $v_j$ has height at least
\[
\frac1\alpha\log\frac n{k_n}-A \geq\left(\frac12+\epsilon_0\right)R
\]
for some fixed $\epsilon_0>0$, where the inequality follows from $\delta<1-\alpha$.  Thus the height assumption in Lemma~\ref{lem:separated-multiscale-flows} holds uniformly.  At the final scale $u_J=R-h(v_j)+O(1)$, the appendix estimates,
\eqref{eq:initial-half-tile-count} and the definition of $P_{J,j}$ give
\eqref{eq:critical-initial-count} and \eqref{eq:critical-final-scale}.

For a prescribed point $x$ above the height cutoff in the last display,
\eqref{eq:initial-half-tile-count} shows that the candidate count at
every scale is at least a fixed positive multiple of $n^{(1-\alpha-\delta)/\alpha}$.  Consequently the Palm failure estimate in Lemma~\ref{lem:separated-multiscale-flows} is at most $\exp\{-c n^{(1-\alpha-\delta)/\alpha}\}$, uniformly over the scale and $b\in\{0,1\}$, for some fixed $c>0$.

Write $\p_x$ for the Palm law obtained by adding $x$.  Mecke's formula gives the following bound on the expected number of failures above the cutoff, summed over points, scales and both values of $b$:
\begin{align*}
&\e\left[
 \sum_{\substack{x\in\mathcal P_n\\
h(x)\geq\alpha^{-1}\log(n/k_n)-A}} \sum_{i=0}^{J(x)}\sum_{b=0}^1 \mathbbm1_{\{(x,i,b)\text{ fails}\}}
 \right] \\
&\quad= \int_{\{h(x)\geq\alpha^{-1}\log(n/k_n)-A\}} \sum_{i=0}^{J(x)}\sum_{b=0}^1
 \p_x\bigl((x,i,b)\text{ fails}\bigr)\,\lambda_n(\de x) \\
&\quad\leq 2n\left(\frac R\ell+1\right) \exp\{-c n^{(1-\alpha-\delta)/\alpha}\} \longrightarrow0.
\end{align*}
Here the factor $2$ accounts for the two values of $b$, $J(x)+1\leq R/\ell+1$, and $\lambda_n(B_O(R))=n$.  Together with the initial high-probability height event, Markov's inequality now gives simultaneous success for all the required vertices with high probability.
\end{proof}

\begin{proposition}[Critical aggregate conductance]
\label{prop:critical-aggregate-conductance}
For fixed $0<\beta<1/(2\alpha)$, choose $w$ by the same function of the point process as before Proposition~\ref{prop:escape-to-target}.  The argument proving
\eqref{eq:target-degree} uses only this upper bound on $\beta$, and hence
$\de(w)=n^{\beta+o(1)}$ with high probability.

For every fixed $0<\delta<1-\alpha$ there are constants $\beta_0,c_1>0$ such that, for every fixed $0<\beta<\beta_0$, with high probability,
\begin{equation}
\p_\rho(T_w^\times<T_{\rm meet}\mid\cG)\geq c_1 .              \label{eq:critical-conductance-escape}
\end{equation}
\end{proposition}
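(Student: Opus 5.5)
We prove \eqref{eq:critical-conductance-escape} through the identity \eqref{eq:shorted-conductance-identity}. At $\varphi=2-2\alpha$ we have $\cC_\times(\Delta)=2M_{2\alpha}/M_{2\alpha-1}^2$, and $M_{2\alpha}\whpasymp n\log n$. The $a$-mass is no longer concentrated on $O(1)$ vertices, so one good source as in Proposition~\ref{prop:escape-to-target} is not enough. Instead we take a superposition of unit product flows from the diagonal points $(v_j,v_j)$, $j_n\le j\le k_n$, with weights $p_j\propto 1/j$, i.e.\ $p_j=1/(jH_n)$. The sources $v_j$ are those of Lemma~\ref{lem:polynomial-sources}, all lying in the fixed arc $\mathcal I$. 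Thomson's principle, applied to the unit flow $\Theta=\sum_j p_j\Theta^{(j)}$ from $\Delta$ to $B_w$, then gives $\cC_\times(\Delta\leftrightarrow B_w)\ge \cE_\times(\Theta)^{-1}$. It therefore suffices to show
\[
\cE_\times(\Theta)=O\left(\frac{M_{2\alpha-1}^2}{n\log n}\right).
\]

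\textbf{Step 1: the single-source flows.} For each $j$, build $\Theta^{(j)}$ exactly as in \eqref{eq:alternating-product-flow} and \eqref{eq:complete-product-flow}. It uses the separated multiscale flows $g_{i,j}^b$ of Lemma~\ref{lem:separated-multiscale-flows} around $v_j$, available simultaneously by Lemma~\ref{lem:polynomial-sources}, followed by the uniform endpoint flow $h_{v_j,w}$ of Lemma~\ref{lem:uniform-endpoint-flow}. The angular separation from $\mathcal I$ to the target arc makes the latter available and disjoint from $Q_J^1$. The computation there gives
\[
\cE_\times(\Theta^{(j)})^{1/2}=O\bigl(M_{1-\varphi}\de(v_j)^{-\alpha}\bigr)+O\Bigl(M_{1-\varphi}n^{-1/2}\bigl(\de(v_j)^{-\chi/2}+\de(w)^{-\chi/2}\bigr)n^{o(1)}\Bigr).
\]
Lemma~\ref{lem:sector-critical-orders} gives $\de(v_j)\asymp (n/j)^{1/(2\alpha)}$.

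\textbf{Step 2: bounding the energy of the mixture.} A naive Minkowski bound $\cE^{1/2}(\sum p_j\Theta^{(j)})\le\sum p_j\cE^{1/2}(\Theta^{(j)})$ gives only $(\sum_j p_j (j/n)^{1/2})^2$, which is too large. We need the near-orthogonality of the pieces. The early-scale lifts of different sources live on different vertex pairs, since $v_j$ and $v_{j'}$ have disjoint supports at small scales. So we write $\cE_\times(\Theta)=\sum_{j,j'}p_jp_{j'}\langle\Theta^{(j)},\Theta^{(j')}\rangle_\times$ and control the cross terms. The diagonal terms contribute
\[
\sum_j p_j^2 M_{1-\varphi}^2\de(v_j)^{-2\alpha}\asymp \frac{M_{1-\varphi}^2}{nH_n^2}\sum_j j^{-1}\asymp\frac{M_{1-\varphi}^2}{n\log n},
\]
which is exactly the required order. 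The cross terms come from three sources: (a) overlaps of the coarse-scale parts of the lifts near scale $J$, where the flows spread over $\asymp n$ terminal vertices; (b) the common final legs $\mathsf L_1(h_{v_j,w},\lambda_J^1)$; (c) large-scale multiscale flows of nearby sources sharing half-tiles. For (b), we bound by Cauchy--Schwarz the energy of the total final leg $\sum_j p_j\mathsf L_1(\cdots)$ by $M_{1-\varphi}^2 n^{-1}\max(\cdots)$. This is $O(M_{1-\varphi}^2n^{-1}(\de(v_{k_n})^{-\chi}+\de(w)^{-\chi})n^{o(1)})$. It is $o(M_{1-\varphi}^2/(n\log n))$ once $\beta>0$ is fixed and, for the source part, since $\de(v_{k_n})\ge n^{(1-\delta)/(2\alpha)}$ is polynomial. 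This is where the hypotheses $\beta<\beta_0$ and $\delta<1-\alpha$ enter, possibly after replacing the endpoint leg by a common hub flow to $w$ whose energy is averaged.

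\textbf{Main obstacle.} The main obstacle is (a) and (c): showing that the cross inner products of the multiscale parts sum to $O(M_{1-\varphi}^2/(n\log n))$. I would first try to make the families orthogonal by choosing, for each $j$, the angular arcs $\mathcal A_i^b$ inside disjoint sub-arcs of $\mathcal I$. The sources of Lemma~\ref{lem:polynomial-sources} are spread at angular spacing about $1/j$ by Poisson concentration. At scale $i$ around $v_j$, the arc has width $\rho_i\asymp \epsilon e^{(h(v_j)+u_i-R)/2}$, so only about $j\rho_i$ other sources can interact. The resulting cross-term sum is geometric in $i$ and is dominated by the final scale, where $I_\varphi(\lambda_J)=O(1/n)$ already suppresses the overlap. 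If this direct orthogonality fails, the fallback is to merge the flows at a common coarse scale: route all sources to a shared terminal measure close to $\pi$ restricted to $\mathcal I$, using the $t_{\rm rel}$ correction $\nabla\psi$ as in the proof of Theorem~\ref{thm:escape-hrg}, and send a single flow to $w$.
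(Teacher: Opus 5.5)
\textbf{There is a genuine gap: the energy bound for the mixture is never proved.} Your architecture matches the paper's. You use the mixture $\sum_j r_j\Theta_j$ with $r_j=(jH_n)^{-1}$ of single-source product flows from $(v_j,v_j)$ to $B_w$, and apply Thomson's principle against $\cC_\times(\Delta)=2M_{2\alpha}/M_{2\alpha-1}^2$. Your diagonal-term count $\sum_j r_j^2M_{2\alpha-1}^2\de(v_j)^{-2\alpha}\asymp M_{2\alpha-1}^2/(nH_n)$ is exactly the leading order. But you call the cross terms (a) and (c) the ``main obstacle'' and offer only tentative routes, so $\cE_\times(\Theta)=O(n/\log n)$ is never established. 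Your first route also cannot work as stated. To have $I_{2-2\alpha}(\lambda_J)=O(1/n)$, the final-scale arcs around each $v_j$ must have fixed width $\asymp\epsilon$. With $n^\delta$ sources in the fixed arc $\mathcal I$, the coarse-scale families cannot sit in disjoint sub-arcs. The remark that $I_\varphi(\lambda_J)$ ``suppresses the overlap'' is not an estimate. The fallback only changes the final leg, which was never the problem.

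\textbf{The missing idea: only one piece needs orthogonality, and it is exactly orthogonal.}
\begin{itemize}
\item The $M_{2\alpha-1}\de(v_j)^{-\alpha}$ term in your Step~1 comes entirely from the scale-$0$ first-coordinate lift $A_j:=\mathsf L_1(g_{0,j}^0,\mathbbm 1_{v_j})$. Its frozen coordinate is a point mass with $I_{2-2\alpha}(\mathbbm 1_{v_j})=\de(v_j)^{-(2\alpha-1)}$, so $\cE_\times(A_j)\le CM_{2\alpha-1}^2\de(v_j)^{-2\alpha}=O(nj)$.
\item Different $A_j$ have disjoint product-edge supports, simply because the frozen second coordinate is the vertex $v_j$ itself; no base-graph separation is involved. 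Hence $\cE_\times(\sum_jr_jA_j)=\sum_jr_j^2\cE_\times(A_j)=O(n/H_n)$.
\item Every other piece (the remaining alternating lifts, the coarse scales, the final leg) has $\cE_\times(\Theta_j-A_j)=O(n^{2-1/\alpha}j^{1/\alpha}+n^{1-\beta\chi/2})$. This comes from $M_{2\alpha-1}^2\de(v_j)^{-2}$, from $M_{2\alpha-1}^2/(nP_{J,j})=O(jn^{2\alpha-1})$, and from the endpoint-flow bounds.
\item For these pieces the crude Minkowski bound over $j$ suffices. We have $\sum_jr_j\,n^{1-1/(2\alpha)}j^{1/(2\alpha)}=O(n^{1-1/(2\alpha)}k_n^{1/(2\alpha)}/H_n)$, whose square divided by $n/H_n$ is $O(n^{-(1-\alpha-\delta)/\alpha}/H_n)\to0$. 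The target term contributes $O(H_nn^{-\beta\chi/2})\to0$ relative to $n/H_n$.
\item The triangle inequality $\cE_\times(\Theta^{\rm crit})^{1/2}\le\cE_\times(\sum_jr_jA_j)^{1/2}+\sum_jr_j\cE_\times(\Theta_j-A_j)^{1/2}=O(\sqrt{n/H_n})$ then closes the proof.
\end{itemize}

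This also corrects your attribution of the hypotheses. Apart from the height condition in Lemma~\ref{lem:polynomial-sources}, $\delta<1-\alpha$ is what makes the Minkowski-summed remainder negligible. For the target, only that $\beta>0$ is fixed matters, so $\beta_0=1/(2\alpha)$ works.
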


\begin{proof}
Take $\beta_0:=1/(2\alpha)$. For the objects associated with $v_j$, write $g_{i,j}^b$ and $\lambda_{i,j}^b$ for $g_i^b$ and $\lambda_i^b$ in Lemma~\ref{lem:separated-multiscale-flows}, where $J=J(j)$ is the final scale.  For each $j_n\leq j\leq k_n$, put $\lambda_{-1,j}^0=\lambda_{-1,j}^1=\mathbbm{1}_{v_j}$ and $g_{-1,j}^0=g_{-1,j}^1=0$.  With the endpoint flow from Lemma~\ref{lem:uniform-endpoint-flow}, define
\begin{align*}
\Theta_j:={}&\sum_{i=0}^J\left\{ \mathsf L_1(g_{i,j}^0-g_{i-1,j}^0,\lambda_{i-1,j}^1)
 +\mathsf L_2(\lambda_{i,j}^0,g_{i,j}^1-g_{i-1,j}^1)\right\}\\
&\quad+\mathsf L_1(-g_{J,j}^0+h_{v_j,w},\lambda_{J,j}^1).
\end{align*}

The moving-coordinate flow in the last lift has divergence $\lambda_{J,j}^0-\mathbbm{1}_w$.  Thus, as in
\eqref{eq:alternating-product-flow}, $\Theta_j$ is a unit flow from
$(v_j,v_j)$ to $\{w\}\times\operatorname{supp}\lambda_{J,j}^1\subset B_w$. The support conclusions of Lemmas~\ref{lem:separated-multiscale-flows} and~\ref{lem:uniform-endpoint-flow} imply that $\operatorname{supp}\lambda_{J,j}^1$ is disjoint from the support of $-g_{J,j}^0+h_{v_j,w}$, so the final lift also avoids the diagonal. Write
\[
A_j:=\mathsf L_1(g_{0,j}^0,\mathbbm{1}_{v_j}).
\]

Intersect the high-probability events in Lemmas
\ref{lem:polynomial-sources} and~\ref{lem:sector-critical-orders}, and in
Proposition~\ref{prop:giant-degree-moments}.  We may choose a fixed $C\geq1$, independent of $n,j,i,b$ and $\beta$, such that throughout the stated ranges
\begin{align}
\cE_{\cG}(g_{i,j}^b) &\leq\frac{C}{\de(v_j)e^{-(2\alpha-1)u_i/2}}, &I_{2-2\alpha}(\lambda_{i,j}^b)
 &\leq\frac{C}{\de(v_j)e^{u_i/2}},                    \label{eq:critical-fixed-flow-bounds}\\
\frac nC\leq M_{2\alpha-1}&\leq Cn,
&M_{2\alpha}&\leq Cn\log n,\notag\\
\de(v_j)^{2\alpha}&\geq\frac n{Cj},
&P_{J,j}&\geq\frac{n^{2-2\alpha}}{Cj},\notag\\
\de(v_j)e^{u_J/2}&\geq\frac nC.                       \label{eq:critical-fixed-order-bounds}
\end{align}

This constant depends only on the fixed model and construction parameters (including $\delta$).  For each fixed $\beta$, further intersect with the event in Lemma~\ref{lem:uniform-endpoint-flow}. All implicit constants below are independent of $n,j$ and $\beta$; the rate in its $o(1)$ term, and hence how large $n$ must be, may depend on the fixed $\beta$.

Thus $A_j$ is the $i=0$ first-coordinate summand in
\eqref{eq:alternating-product-flow}.  By
\eqref{eq:weighted-lift}, \eqref{eq:critical-fixed-flow-bounds} and
\eqref{eq:critical-fixed-order-bounds},
\[
\cE_\times(A_j) \leq \frac{CM_{2\alpha-1}^2}{\de(v_j)^{2\alpha}} \leq C^4nj.
\]
The product-edge supports of the $A_j$ are pairwise disjoint: in $A_j$ the second coordinate is frozen at the distinct vertex $v_j$.

For the remaining alternating lifts, the energy triangle inequality for $g_{i,j}^b-g_{i-1,j}^b$, \eqref{eq:weighted-lift} and
\eqref{eq:critical-fixed-flow-bounds} give
\begin{align*}
&\sum_{i=1}^J \cE_\times\!\left(\mathsf L_1 (g_{i,j}^0-g_{i-1,j}^0,\lambda_{i-1,j}^1)\right)^{1/2} +\sum_{i=0}^J \cE_\times\!\left(\mathsf L_2
 (\lambda_{i,j}^0,g_{i,j}^1-g_{i-1,j}^1)\right)^{1/2}\\
&\quad\leq 2C M_{2\alpha-1}\left\{ \sum_{i=1}^J \left(\de(v_j)e^{-(2\alpha-1)u_i/2}\right)^{-1/2} \left(\de(v_j)e^{u_{i-1}/2}\right)^{-1/2}
 \right.\\
&\hspace{42mm}\left. +\sum_{i=0}^J \left(\de(v_j)e^{-(2\alpha-1)u_i/2}\right)^{-1/2}
 \left(\de(v_j)e^{u_i/2}\right)^{-1/2}\right\}\\
&\quad\leq\frac{2C(1+e^{\ell/4})M_{2\alpha-1}}{\de(v_j)} \sum_{i=0}^J e^{-(1-\alpha)u_i/2} =O\left(\frac{M_{2\alpha-1}}{\de(v_j)}\right),
\end{align*}
where we used the geometric sum and the fact that $u_i-u_{i-1}=\ell$ is fixed.  The omitted first-coordinate lift at $i=0$ is precisely $A_j$.

At the final scale, \eqref{eq:critical-fixed-flow-bounds} and
\eqref{eq:critical-fixed-order-bounds} give
\[
I_{2-2\alpha}(\lambda_{J,j}^1) \leq\frac{C}{\de(v_j)e^{u_J/2}} \leq\frac{C^2}{n}.
\]
Since $0\leq R-h(v_j)-u_J<\ell$, the first bound in
\eqref{eq:critical-fixed-flow-bounds} gives
$\cE_{\cG}(g_{J,j}^0)\leq C/P_{J,j}$.  Hence the energy triangle inequality and \eqref{eq:uniform-endpoint-flow} give
\[
\cE_{\cG}(-g_{J,j}^0+h_{v_j,w})^{1/2} \leq C^{1/2}P_{J,j}^{-1/2} +n^{o(1)}\left(\de(v_j)^{-\chi/2}+\de(w)^{-\chi/2}\right).
\]
Applying \eqref{eq:weighted-lift} to this final lift and then the energy triangle inequality to $\Theta_j-A_j$ therefore gives
\[
\cE_\times(\Theta_j-A_j)^{1/2} =O\left(\frac{M_{2\alpha-1}}{\de(v_j)} +\frac{M_{2\alpha-1}}{\sqrt n} \left\{P_{J,j}^{-1/2} +n^{o(1)}\left(\de(v_j)^{-\chi/2} +\de(w)^{-\chi/2}\right)\right\}\right).
\]

Squaring this bound gives
\begin{equation*}
\cE_\times(\Theta_j-A_j)=O\left( \frac{M_{2\alpha-1}^2}{\de(v_j)^2} +\frac{M_{2\alpha-1}^2}{n}\left(P_{J,j}^{-1} +n^{o(1)}\de(v_j)^{-\chi} +n^{o(1)}\de(w)^{-\chi}\right)\right).
\end{equation*}

The fixed bounds in \eqref{eq:critical-fixed-order-bounds} and $\de(w)=n^{\beta+o(1)}$ now give the following estimates.  All $o(1)$ terms are uniform in $j$ for each fixed $\beta$.
\begin{align*}
\frac{M_{2\alpha-1}^2}{\de(v_j)^2} &=O\left(n^{2-1/\alpha}j^{1/\alpha}\right), &\frac{M_{2\alpha-1}^2}{nP_{J,j}}
 &=O\left(jn^{2\alpha-1}\right),\\
\frac{M_{2\alpha-1}^2}{n}n^{o(1)}\de(v_j)^{-\chi} &=O\left(n^{2\alpha-1+o(1)}j^{2(1-\alpha)}\right), &\frac{M_{2\alpha-1}^2}{n}n^{o(1)}\de(w)^{-\chi} &=O\left(n^{1-\beta\chi+o(1)}\right).
\end{align*}

Since $2\alpha-1<2-1/\alpha$, $1<1/\alpha$ and $2(1-\alpha)<1/\alpha$, for each fixed $\beta>0$ and all sufficiently large $n$, uniformly in $j$,
\begin{equation*}
\cE_\times(\Theta_j-A_j) =O\left(n^{2-1/\alpha}j^{1/\alpha} +n^{1-\beta\chi/2}\right).
\end{equation*}

Put $r_j:=(jH_n)^{-1}$ and
\[
\Theta^{\rm crit}:=\sum_{j=j_n}^{k_n}r_j\Theta_j.
\]

This is a unit flow from a probability measure on the diagonal to a probability measure on $B_w$.  The triangle inequality gives
\begin{align}
\cE_\times(\Theta^{\rm crit})^{1/2} &\leq \left(\sum_{j=j_n}^{k_n}r_j^2\cE_\times(A_j)\right)^{1/2}
 +\sum_{j=j_n}^{k_n}r_j\cE_\times(\Theta_j-A_j)^{1/2} \notag\\
&\leq C^2\sqrt{\frac n{H_n}} +O\left(\frac{n^{1-1/(2\alpha)}k_n^{1/(2\alpha)}}{H_n} +n^{(1-\beta\chi/2)/2}\right).                  \label{eq:critical-triangle}
\end{align}

Here we used $\sum_{j=1}^{k_n}j^{-1+1/(2\alpha)}\leq 2\alpha k_n^{1/(2\alpha)}$.  The squares of the two terms inside the $O(\cdot)$ in \eqref{eq:critical-triangle}, divided by $n/H_n$, are, respectively,
\[
O\left(\frac1{H_n} n^{-(1-\alpha-\delta)/\alpha}\right) \longrightarrow 0, \qquad O\left(H_n n^{-\beta\chi/2}\right) \longrightarrow 0.
\]

Thus, for each fixed $\beta$ and all sufficiently large $n$,
\[
\cE_\times(\Theta^{\rm crit})^{1/2} \leq2C^2\sqrt{n/H_n}.
\]

Since $H_n\sim\delta\log n$, we have $H_n\geq\frac\delta2\log n$ for all sufficiently large $n$.  Hence, with high probability,
\begin{equation}
\cE_\times(\Theta^{\rm crit}) \leq\frac{8C^4}{\delta}\frac n{\log n}.                 \label{eq:critical-flow-energy}
\end{equation}

After the diagonal is shorted, Thomson's principle and
\eqref{eq:critical-flow-energy} give
\[
\cC_\times(\Delta\leftrightarrow B_w) \geq\frac{\delta}{8C^4}\frac{\log n}{n}.
\]

Moreover, \eqref{eq:shorted-diagonal-c} and
\eqref{eq:critical-fixed-order-bounds} give
\[
\cC_\times(\Delta) =\frac{2M_{2\alpha}}{M_{2\alpha-1}^2} \leq2C^3\frac{\log n}{n}.
\]

Consequently \eqref{eq:shorted-conductance-identity} gives the claim
\[
\p_\rho(T_w^\times<T_{\rm meet}\mid\cG)\geq \frac{\delta}{16C^7},
\]
a constant independent of $n$ and $\beta$.
\end{proof}

The next strengthening of \eqref{eq:green-extreme-window} supplies the delayed-target estimate for the degree scales not used as flow sources; recall the definition \eqref{eq:stationary-flow-energy} of $R_\varphi$.

\begin{lemma}[Polynomial-degree stationary potentials]
\label{lem:critical-stationary-potential}
At $\varphi=2-2\alpha$, for every fixed $\gamma>0$, with high probability uniformly over vertices with $h(v)\geq\gamma R$,
\begin{equation}
R_{2-2\alpha}(v)\leq \de(v)^{-\chi}n^{o(1)}.                                          \label{eq:critical-stationary-potential}
\end{equation}
\end{lemma}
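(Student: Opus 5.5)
The proof follows the route used for \eqref{eq:green-extreme-window}: build a flow from $v$ to a spread-out measure with small $I_\varphi$, then correct its sink to $\pi$ by a gradient whose energy is controlled by the relaxation time. Here we cannot use Lemma~\ref{lem:separated-multiscale-flows} directly, because its height hypothesis $h(v)\geq(1/2+\epsilon_0)R$ fails for many $v$ with $h(v)\geq\gamma R$. Instead we use the endpoint-to-half-tile flows of \cite[Lemma~62]{MR4816414} underlying Lemma~\ref{lem:uniform-endpoint-flow}.

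\textbf{Step 1: the large-energy case.} If $\de(v)^{-\chi}n^{o(1)}\geq n^{2\alpha-2}$, it suffices to route a unit flow from $v$ to a fixed high vertex. Take a flow $f_v$ from $v$ to a vertex $o$ of height $\gamma' R$, with $\gamma'$ chosen so that $\de(o)^{-\chi}\leq n^{2\alpha-2}$; this needs $\gamma'\chi/2\geq 2-2\alpha$, i.e.\ $\gamma'\geq 1/\alpha$. Since the maximal height is about $R/(2\alpha)$, take instead $o$ near the maximal height, so that $\de(o)^{-\chi}=n^{-2(1-\alpha)+o(1)}$. The endpoint flow of Lemma~\ref{lem:uniform-endpoint-flow}, using a rotated copy if the angular separation from $v$ is too small, gives energy $n^{o(1)}(\de(v)^{-\chi}+\de(o)^{-\chi})$.

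\textbf{Step 2: correct the sink.} We then need a flow from $o$ to $\pi$. By the argument of \eqref{eq:green-extreme-window}, since $o\in\mathcal W_n$, we have $R_\varphi(o)\leq\de(o)^{-\chi}(\log n)^{O(1)}=n^{2\alpha-2+o(1)}$. Concatenating the two flows and applying the triangle inequality for $\cE_{\cG}^{1/2}$ gives
\[
R_{2-2\alpha}(v)\leq n^{o(1)}\bigl(\de(v)^{-\chi}+n^{2\alpha-2}\bigr).
\]
Since $\de(v)\leq n^{1/(2\alpha)+o(1)}$ uniformly, we have $\de(v)^{-\chi}\geq n^{-2(1-\alpha)-o(1)}$. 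The second term is therefore absorbed into the first, which proves \eqref{eq:critical-stationary-potential}.

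\textbf{Uniformity.} The bound must hold simultaneously over all $v$ with $h(v)\geq\gamma R$. Lemma~\ref{lem:uniform-endpoint-flow} is already simultaneous for all pairs at angular separation $\geq\zeta$. To handle every $v$, fix two maximal-height-window vertices $o_1,o_2$ at angular distance at least $2\zeta$, with $\zeta<\pi/4$. Each $v$ is then at distance at least $\zeta$ from one of them, and we use that one. Both lie in $\mathcal W_n$ with high probability by \eqref{eq:extreme-source-mass} and rotational invariance. Lemma~\ref{lem:separated-multiscale-flows} applies to them after a Mecke union bound over the polylogarithmic set $\mathcal W_n$, exactly as in Proposition~\ref{prop:escape-to-target}.

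\textbf{Main obstacle.} The delicate point is checking that $\de(v)^{-\chi}$ dominates the stationary-correction term $n^{2\alpha-2}$ uniformly, including near the top heights. This holds with only an $n^{o(1)}$ loss because of the maximum-degree bound $n^{1/(2\alpha)+o(1)}$. A secondary concern is whether the endpoint flow estimate \eqref{eq:uniform-endpoint-flow} really requires both endpoints to have height $\geq\gamma R$; both $v$ and $o$ satisfy this, so the hypothesis is met.
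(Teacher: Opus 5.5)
Your proposal is correct and is essentially the paper's proof. It uses the same ingredients: two anchor vertices near the maximal height in angularly separated arcs; the continuation argument of \eqref{eq:green-extreme-window} for their stationary potentials (valid here since $2-2\alpha<1$); the uniform endpoint flow from $v$ to the farther anchor; and absorption of the anchor's $\de(o)^{-\chi}$ term via $\de(v)\le n^{1/(2\alpha)+o(1)}$. Two harmless slips: the Step~1 threshold should read $\gamma'\chi\geq 2-2\alpha$, i.e.\ $\gamma'\geq 1/(2\alpha)$, which is exactly why the maximal-height choice suffices; and the existence of window vertices in each of the two fixed arcs comes from Poisson concentration, not from \eqref{eq:extreme-source-mass}.
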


\begin{proof}
Choose two fixed closed angular arcs at positive distance from one another.  Poisson concentration shows that, with high probability, each arc contains polylogarithmically many vertices $z$ satisfying
\[
\frac{R}{2\alpha}-2\log\log n\leq h(z) \leq\frac{R}{2\alpha}-\log\log n.
\]

Choose one such vertex in each arc.  The Palm failure estimate in Lemma~\ref{lem:separated-multiscale-flows}, Mecke's formula and a union bound make its final-scale flow available for both selected vertices. Since $2-2\alpha<1$ and $M_{2\alpha-1}\whpasymp n$ by Proposition~\ref{prop:giant-degree-moments}, the continuation argument proving \eqref{eq:green-extreme-window} applies unchanged and gives
\[
R_{2-2\alpha}(z)\leq \de(z)^{-\chi}(\log n)^{O(1)}.
\]

For every $v$ with $h(v)\geq\gamma R$, one of the selected vertices $z$ is at fixed positive angular separation from $v$. Lemma~\ref{lem:uniform-endpoint-flow} gives the required flow $h_{v,z}$ simultaneously for all such $v$.  Concatenating it with the flow from $z$ to $\pi$ and using the energy triangle inequality proves
\eqref{eq:critical-stationary-potential}; the selected $z$ has degree within a
polylogarithmic factor of the maximum, so its term is absorbed by the $n^{o(1)}$ factor.
\end{proof}

\begin{theorem}[Critical mixing before remeeting]
\label{thm:critical-escape}
At $\varphi=2-2\alpha$, there is a fixed $c>0$ such that, with high probability,
\begin{equation}
\p_\rho(T_{\rm mix}<n^\alpha<T_{\rm meet}\mid\cG)\geq c.          \label{eq:critical-escape}
\end{equation}
\end{theorem}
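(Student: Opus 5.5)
The plan follows the proof of Theorem~\ref{thm:escape-hrg}, with Proposition~\ref{prop:critical-aggregate-conductance} replacing Proposition~\ref{prop:escape-to-target}. We split the event as
\[
\p_\rho(T_{\rm mix}<n^\alpha<T_{\rm meet}\mid\cG)\geq \p_\rho(T_w^\times<T_{\rm meet}\mid\cG)-\p_\rho(T_w^\times\leq n^\alpha\mid\cG)-\p_\rho(T_{\rm mix}\geq n^\alpha\mid\cG).
\]
Proposition~\ref{prop:critical-aggregate-conductance} bounds the first term below by the constant $c_1$ with high probability. This holds for any fixed $\beta<\beta_0$, and $c_1$ does not depend on $\beta$. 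For the third term we use the separation distance of the product chain. By Corollary~\ref{cor_comparison}, $t_{\rm mix}=\widetilde O(n^{2\alpha-1})$, and the relaxation-time comparison then gives $s_\rho(n^\alpha)=o(1)$, because $\alpha>2\alpha-1$. Hence the optimal strong stationary time satisfies $\p_\rho(T_{\rm mix}\geq n^\alpha)=o(1)$ with high probability. It remains to show that the second term is $o(1)$ once $\beta$ is fixed suitably small; then $c=c_1/2$ works.

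For the delayed-target estimate we copy the derivation of \eqref{eq:early-w-varphi}, but now with $\varphi=2-2\alpha$. The Green bound \eqref{eq:weighted-green} bounds the expected number of departures from $v$ by time $t$ by $\de(v)R_{2-2\alpha}(v)+t\de(v)/M_{2\alpha-1}$. Each excursion from $v$ reaches $w$ before returning with probability $1/(\de(v)\cR_{\cG}(v\leftrightarrow w))$. The cut estimate of \cite[Lemma~67]{MR4816414} gives $\cR_{\cG}(v\leftrightarrow w)\geq\de(w)^{-\chi}n^{-o(1)}$ for all $v$ at angular distance at least $2\eta$ from $w$. In the previous proof, $R_\varphi$ was controlled only on the extreme window $\mathcal W_n$. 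Here we instead use Lemma~\ref{lem:critical-stationary-potential}, which gives $R_{2-2\alpha}(v)\leq\de(v)^{-\chi}n^{o(1)}$ uniformly over $h(v)\geq\gamma R$. The same union bound over excursions then gives, for such $v$,
\[
\p_{v,U_v}(T_w^\times\leq n^\alpha)\leq n^{o(1)}\Bigl\{\Bigl(\tfrac{\de(w)}{\de(v)}\Bigr)^{\chi}+n^{\alpha-1}\de(w)^{\chi}\Bigr\}.
\]
The second term is $n^{\alpha-1+\beta\chi+o(1)}=o(1)$ provided $\beta\chi<1-\alpha$.

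The main obstacle is averaging this bound against $\rho$. At criticality the mass $a_v\propto\de(v)^{2\alpha}$ is spread over polynomially many degree scales, so we cannot restrict to $\mathcal W_n$. We split the sources into three groups.
\begin{itemize}
\item Sources within angular distance $3\eta$ of $w$ carry $o(1)$ of the $a$-mass, by rotational invariance as in \eqref{eq:nearby-diagonal-mass}.
\item Sources with $h(v)<\gamma R$, i.e.\ degree below $n^{\gamma+o(1)}$, carry $a$-mass $\sum_{\de(v)\leq n^{\gamma}}\de(v)^{2\alpha}/M_{2\alpha}$. Since the degree tail is $\de^{-2\alpha-1}$, each dyadic degree scale contributes order $n$ to $M_{2\alpha}\asymp n\log n$. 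This fraction is therefore about $2\alpha\gamma+o(1)$, which is small for small fixed $\gamma$. This step needs Proposition~\ref{prop:giant-degree-moments}, or the scale-by-scale count behind it, applied on the giant.
\item Sources with $h(v)\geq\gamma R$ but $\de(v)\leq\de(w)n^{\gamma}$ have $a$-mass of order $\beta+\gamma$ by the same logarithmic count. We make this small by taking $\beta$ small, which is permitted since $c_1$ does not depend on $\beta$.
\end{itemize}
On the remaining sources, $(\de(w)/\de(v))^\chi\leq n^{-\gamma\chi+o(1)}$. The neighbour $U_v$ is handled by the strong Markov property exactly as before. We choose the parameters in this order: fix $\delta$ and get $c_1$; pick $\gamma$ and $\beta<\min\{\beta_0,(1-\alpha)/\chi\}$ so that the excluded $a$-mass is at most $c_1/4$. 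Then $\p_\rho(T_w^\times\leq n^\alpha)\leq c_1/4+o(1)$ with high probability, which gives \eqref{eq:critical-escape} with $c=c_1/2$.
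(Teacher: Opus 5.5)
Your proposal is correct and follows the paper's proof essentially step for step. It uses the same three-term split, Proposition~\ref{prop:critical-aggregate-conductance} for escape to $w$, Lemma~\ref{lem:critical-stationary-potential} with the excursion bound for the delayed-target term, the truncated critical moment (Lemma~\ref{lem:truncated-critical-moment}) for low sources, and the separation-distance argument for $T_{\rm mix}$. The only cosmetic difference is that the paper takes $\beta<\gamma$: every source with $h(v)\geq\gamma R$ then already has $(\de(w)/\de(v))^\chi\leq n^{-\chi(\gamma-\beta)+o(1)}$, so your extra excluded band $\de(v)\leq\de(w)n^{\gamma}$ is unnecessary, though discarding it at cost $O(\beta+\gamma)$ of $a$-mass is equally valid.
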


\begin{proof}
Apply Proposition~\ref{prop:critical-aggregate-conductance} with $\delta=(1-\alpha)/2$, and let $\beta_0,c_1$ be its constants.  The normalised truncated moment estimate in Lemma~\ref{lem:truncated-critical-moment} allows us to choose a fixed $\gamma>0$ so small that
\begin{equation}
\frac1{M_{2\alpha}} \sum_{v:h(v)<\gamma R}\de(v)^{2\alpha}\leq\frac{c_1}{4}          \label{eq:critical-low-source-mass}
\end{equation}
with high probability.  Next choose
\[
0<\beta<\min\left\{\gamma,\frac{1-\alpha}{\chi}, \frac1{2\alpha},\beta_0\right\}
\]
and use the target of Proposition~\ref{prop:critical-aggregate-conductance}.

At criticality, put $a_v:=\de(v)^{2\alpha}/M_{2\alpha}$.  The sector edge-cut estimate used in
\eqref{eq:early-w-varphi} applies simultaneously outside the
$3\eta$-aperture about $w$.  The rotational-invariance argument proving
\eqref{eq:nearby-diagonal-mass}
gives, with high probability,
\[
\sum_{v:\,\operatorname{dist}_{\mathbb S^1} (\vartheta(v),\vartheta(w))\leq3\eta}a_v=o(1).
\]

For each $v$, let $U_v$ be a uniform neighbour of $v$.  For $h(v)\geq\gamma R$, Lemma~\ref{lem:critical-stationary-potential} and the excursion calculation leading to \eqref{eq:early-w-varphi} give, uniformly outside that aperture (using $\de(v)\geq n^{\gamma+o(1)}$ from \eqref{eq:uniform-degree-height}),
\[
\p_{v,U_v}(T_w^\times\leq n^\alpha) \leq n^{-\chi(\gamma-\beta)+o(1)} +n^{\alpha+\beta\chi-1+o(1)}=o(1).
\]

Averaging with the full critical source law and using
\eqref{eq:critical-low-source-mass} therefore yields, with high probability,
\[
\p_\rho(T_w^\times\leq n^\alpha\mid\cG)\leq c_1/4+o(1).
\]

Together with \eqref{eq:critical-conductance-escape}, this gives, with high probability,
\[
\p_\rho(n^\alpha<T_w^\times<T_{\rm meet}\mid\cG)\geq c_1/2.
\]

Finally Corollary~\ref{cor_comparison} implies $s_\rho(n^\alpha)=o(1)$ with high probability.  The strong-stationary-time argument at the end of the proof of Theorem~\ref{thm:escape-hrg} now proves \eqref{eq:critical-escape}.
\end{proof}

 \section*{Acknowledgements}

JF is grateful for the hospitality of Aarhus University, where the majority of this work was done. This article is based upon work from COST Action 24122 mSPACE, supported by COST (European Cooperation in Science and Technology), \url{www.cost.eu}.

The authors used a large language model during the preparation of this manuscript for editorial and expository support. The ideas of the proofs are due to the authors and all final wording was checked and approved by the authors, who take full responsibility for the content of the manuscript.

\printbibliography

\appendix

\section{Degree moments on the giant component}

This appendix proves the degree estimates used throughout the main argument. The central result determines the order of every power sum of the degrees on the giant component. Its proof first treats the critical moment and then derives the subcritical and supercritical cases. The remaining results identify the consequences needed in the main text: the stationary collision mass, concentration of a supercritical source law near the largest degrees, degree order within a fixed sector, a uniform comparison of degree and height, and a bound on the critical moment below a fixed height.

Recall that $M_q=\sum_{v\in V(\cG)}\de(v)^q$ for $q\in\mathbb R$. When $q>0$, we also write $M_q^{\rm full}:=\sum_v\de(v)^q$. We repeatedly use that the estimates stated with extremely high probability in \cite{dieter_mixing} may be intersected over polynomially many vertices or layers. In particular, the degree comparison in \cite[Proposition~13]{dieter_mixing} may be used uniformly over all vertices in any deterministic height range.

The following proposition supplies the moment estimates from which all later degree calculations follow.

\begin{proposition}[Giant-component degree moments]
\label{prop:giant-degree-moments}
For every fixed $\alpha\in(1/2,1)$, $\nu>0$, and $q\in\mathbb R$, when $q>2\alpha$
\[
M_q \pasymp n^{q/(2\alpha)}
\]
and otherwise
\begin{equation}
M_q \whpasymp
 \begin{cases}
  n,&q<2\alpha,\\
n\log n,&q=2\alpha.
 \end{cases}                                             \label{eq:all-degree-moments}
\end{equation}
\end{proposition}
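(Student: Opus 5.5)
The plan is to compare $M_q$ on the giant with the full-graph sum $M_q^{\rm full}$, and to compute the latter from the Poisson intensity using the standard fact that a vertex at height $h=R-r$ has degree concentrated around $\asymp e^{h/2}$. Heights have density $\propto e^{-\alpha h}$ on $[0,R]$ with total mass $n$. The degree comparison of \cite[Proposition~13]{dieter_mixing} holds uniformly over all vertices above a large constant height. Below that height the degrees are Poisson with bounded mean, so their moments are $O(1)$ per vertex.

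For the expectation we use Campbell/Mecke:
\[
\e M_q^{\rm full}\asymp n\int_0^R e^{qh/2}e^{-\alpha h}\,\de h .
\]
This is of order $n$ for $q<2\alpha$, of order $nR\asymp n\log n$ for $q=2\alpha$, and of order $n e^{(q/2-\alpha)R}=n^{q/(2\alpha)}\cdot n^{o(1)}$ for $q>2\alpha$. In the last case the integral should be cut at the maximal height $R/(2\alpha)+O(1)$ where vertices exist, which gives exactly $n^{q/(2\alpha)}$. Negative $q$ is trivial: on the giant every degree is at least $1$, so $M_q\le N$, and $M_q\ge c n$ because a linear number of giant vertices have bounded degree.

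Concentration is handled separately in the three regimes.

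\emph{Critical case $q=2\alpha$.} Each height layer $[h,h+1]$ contributes $\asymp n e^{-\alpha h}$ vertices of weight $e^{\alpha h}$, so each layer contributes $\Theta(n)$. Layers up to height $(1-\epsilon)R/(2\alpha)$ have polynomially many points, and Poisson concentration holds simultaneously in all of them. Summing over these $\asymp\log n$ layers gives $\Theta(n\log n)$ with high probability, and the top layers add only $O(n\log\log n)$ in expectation. The giant restriction costs nothing at these heights, because all vertices above a large constant height lie in the giant with extremely high probability (\cite{dieter_mixing}). For the lower bound we use that the giant contains a linear fraction of the low-height vertices.

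\emph{Subcritical case $q<2\alpha$.} The mass is dominated by bounded heights. Here a law of large numbers, or a second moment computation on the local weak limit of the giant, gives $M_q\whpasymp n$; the upper bound follows from a variance bound for $M_q^{\rm full}$.

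\emph{Supercritical case $q>2\alpha$.} The sum is dominated by the $O(1)$ vertices of height $R/(2\alpha)+O(1)$. Their number is a Poisson variable with bounded mean, so only order-in-probability holds. The upper bound comes from Markov's inequality on $\e M_q$. For the lower bound, with probability at least $1-\varepsilon$ there is a vertex at height at least $R/(2\alpha)-A_\varepsilon$; it lies in the giant and has degree $\gtrsim n^{1/(2\alpha)}$.

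The main obstacle is the passage from full-graph to giant-component moments at low heights in the subcritical and critical lower bounds. I would handle it by showing that the giant contains a positive fraction of the bounded-height vertices, for instance via the linear giant size \cite{MR4032854}. Bounded-height vertices all have degree $\Theta(1)$ contributions, so that fraction yields $\Omega(n)$. In the critical case the logarithmic contribution comes entirely from heights $\gg1$, which are in the giant.
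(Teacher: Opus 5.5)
Your treatment of $q\le0$ and $q=2\alpha$ (Palm--Mecke first moments plus Poisson concentration in unit-height layers, each carrying $\Theta(n)$ of the critical moment) follows the paper. Your supercritical argument is a legitimate and more elementary alternative. For the lower bound, the number of points above height $R/(2\alpha)-A_\varepsilon$ is Poisson with mean $\asymp e^{\alpha A_\varepsilon}$, which replaces the paper's distributional limit of the maximum degree. For the upper bound, you truncate at height $R/(2\alpha)+A_\varepsilon$ instead of at degree $n^{1/(2\alpha)}$.

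\textbf{The gap: the with-high-probability upper bound for $0<q<2\alpha$.}
A first moment only gives $M_q=O(n)$ in probability. Your remedy, a variance bound for $M_q^{\rm full}$, fails as stated. The diagonal term is $\e\sum_v\de(v)^{2q}\asymp n\int_0^R e^{(q-\alpha)h}\,\de h\asymp n^{1+2q-2\alpha}$, which exceeds $n^2$ for $q\in(\alpha+\frac12,2\alpha)$; this range is nonempty since $\alpha>1/2$. The cross terms are nonnegative by Harris's inequality, because degrees increase when points are added. So the second moment is dominated by improbable near-centre points. Even after truncating heights, you would still have to control correlations from overlapping neighbourhoods, which you do not address.

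The paper avoids this. For $q\le1$ it uses $|V(\cG)|\le M_q\le M_1=2|E(\cG)|$. For $1<q<2\alpha$ it splits at degree $n^\eta$. Below $n^\eta$, the with-high-probability degree counts $D_k^{\rm full}=O(n(1+k)^{-2\alpha-1})$ of Gugelmann et al.\ give $O(n)$. Above $n^\eta$, the tail bound $\e N^{\rm full}_{\ge k}=O(nk^{-2\alpha})$ gives $\e\sum_{\de(v)>n^\eta}\de(v)^q=O(n^{1+\eta(q-2\alpha)})=o(n)$, and Markov makes this part $o(n)$ with high probability. Your law-of-large-numbers or local-limit alternative can also work. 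It needs exactly this uniform-integrability input, $\e\sum_{\de(v)>K}\de(v)^q=O(nK^{q-2\alpha})$, because convergence of bounded local functionals does not control $\de^q$.

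\textbf{Other errors, all repairable.}
\begin{enumerate}
\item For $q>2\alpha$, $ne^{(q/2-\alpha)R}\asymp n^{1+q-2\alpha}$. This is polynomially larger than $n^{q/(2\alpha)}$, not $n^{q/(2\alpha)+o(1)}$. Only your truncated computation, on the event that no point lies above $R/(2\alpha)+A_\varepsilon$, is usable.
\item In the critical case with $\epsilon$ fixed, the band between $(1-\epsilon)R/(2\alpha)$ and $R/(2\alpha)$ carries $\Theta(\epsilon n\log n)$ in expectation, not $O(n\log\log n)$. Markov there is therefore not with high probability. Extend the layer concentration up to $R/(2\alpha)-O(\log\log n)$, as the paper does with $u_n$; layers there still contain $\gg\log R$ expected points.
\item Uniform degree concentration and giant membership do not hold above a large constant height, since linearly many constant-height vertices are isolated. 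Both need heights of order $\log\log n$; the paper uses $2\log R+\omega(1)$ and $2\log R/(1-\alpha)$. This costs only $O(n\log\log n)$ of the critical moment.
\item What you call the main obstacle is not one. For $q\ge0$ every giant degree is at least one, so $M_q\ge|V(\cG)|\ge cn$ immediately.
\end{enumerate}
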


\begin{proof}
We first consider $q\leq0$. By \cite[Theorem~12 and Lemma~14]{MR4816414}, and also by \cite[Theorem~16 and Corollary~17]{dieter_mixing}, there are fixed $0<c<C<\infty$ such that, with high probability,
\[
cn\leq |V(\cG)|\leq Cn,\qquad |E(\cG)|\leq Cn.
\]

On this event, at least $cn/2$ vertices have degree at most $4C/c$. Since every degree in the giant is at least one, $M_q\leq |V(\cG)|$ and these bounded-degree vertices contribute at least a fixed positive constant each to $M_q$. Hence, $M_q\whpasymp n$.

We next prove the critical estimate. Put
\[
\omega_n=\log\log R,\qquad \ell_n=\left\lceil\max\left\{2\log R+\omega_n, \frac{2\log R}{1-\alpha}\right\}\right\rceil,
\]
and
\[
u_n=\left\lfloor\frac{R}{2\alpha} -\frac{\log R}{\alpha}-\omega_n\right\rfloor.
\]
We work in height coordinates. By \cite[Proposition~14(i)]{dieter_mixing}, applied to the corresponding radial layers, and the Poisson concentration estimate in \cite[Lemma~12]{dieter_mixing}, with high probability every unit-height layer
\[
\{v:m\leq h(v)<m+1\},\qquad \ell_n\leq m\leq u_n,
\]
contains $\asymp ne^{-\alpha m}$ points. The lower bound in the definition of $\ell_n$ places these layers inside the giant core by \cite[Remark~20 and Theorem~16]{dieter_mixing}. Moreover,
\cite[Proposition~13]{dieter_mixing}, followed by a union bound over the
$O(R)$ layers and their vertices, gives
\[
\de(v)\asymp e^{m/2}
\]
for every vertex in such a layer. Consequently, each layer contributes $\asymp n$ to $M_{2\alpha}$. Since $u_n-\ell_n\asymp R$, there is a fixed $c_0>0$ such that, with high probability,
\begin{equation}
M_{2\alpha}\geq c_0nR                                   \label{eq:critical-lower}
\end{equation}

For the upper bound, there is a fixed $C_0<\infty$ such that, under the Palm law at a point of height $h$, its full degree is Poisson with mean at most $C_0e^{h/2}$. For every fixed $r>0$, a Poisson variable $X$ with mean $\lambda$ satisfies $\e X^r=O_r(1+\lambda^r)$. Palm--Mecke and the height intensity therefore give, for every interval $A\subseteq[0,R]$,
\begin{equation}
\e\left[\sum_{v:h(v)\in A}\de(v)^{2\alpha}\right] =O\left(n\int_A e^{-\alpha h}(1+e^{\alpha h})\,\de h\right) =O\bigl(n(1+|A|)\bigr).                                 \label{eq:critical-palm-bound}
\end{equation}

Let $H_n=R/(2\alpha)+\log R$. The intervals $[0,\ell_n)$ and $(u_n,H_n]$ have total length $O(\log R)$. Hence, \eqref{eq:critical-palm-bound} and Markov's inequality show that their combined contribution is $o(nR)$ with high probability. On $[\ell_n,u_n]$, the preceding layer bounds give an upper contribution $O(nR)$. Finally,
\[
\p(\exists v:h(v)>H_n)=O(ne^{-\alpha H_n}) =O_\nu(R^{-\alpha})=o(1).
\]

Since $R=2\log(n/\nu)=2\log n+O_\nu(1)$, \eqref{eq:critical-lower} yields fixed constants $0<c<C<\infty$ such that
\begin{equation}
\p\left(cn\log n\leq M_{2\alpha}\leq M_{2\alpha}^{\rm full} \leq Cn\log n\right)\longrightarrow1.                   \label{eq:critical-degree-moment}
\end{equation}

We next record a degree-tail estimate used below. Write $N_{\geq k}^{\rm full}=|\{v:\de(v)\geq k\}|$. Under the Palm law at a point of height $h$, its degree is Poisson with mean at most $C_0e^{h/2}$. Hence, Palm--Mecke and the change of variables $s=C_0e^{h/2}$ give
\begin{equation}
\e N_{\geq k}^{\rm full} =O\left(n\int_0^R e^{-\alpha h} \p\bigl(\operatorname{Poi}(C_0e^{h/2})\geq k\bigr)\,\de h\right) =O(nk^{-2\alpha}),\qquad k\geq1.                          \label{eq:full-degree-tail}
\end{equation}

Indeed, when $s\leq k/2$, a Chernoff bound gives $\p(\operatorname{Poi}(s)\geq k)\leq e^{-\Omega(k)}$, whereas for $s\geq k/2$ we use the trivial bound one and integrate $s^{-2\alpha-1}$. The first region contributes at most a constant multiple of $k^{-2\alpha}$ as well, with bounded $k$ absorbed by the $O(\cdot)$ notation.

Now let $0<q<2\alpha$. If $q\leq1$, then
\[
|V(\cG)|\leq M_q\leq M_1=2|E(\cG)|,
\]
so Theorem~16 and Corollary~17 of \cite{dieter_mixing} prove the claim. Suppose that $1<q<2\alpha$, and let $D_k^{\rm full}=|\{v:\de(v)=k\}|$. By
\cite[Theorem~2.2]{gugelmann2012random}, there is a fixed $\eta>0$ such
that, with high probability,
\begin{equation}
D_k^{\rm full}=O\bigl(n(1+k)^{-2\alpha-1}\bigr), \qquad 1\leq k\leq n^\eta                              \label{eq:small-degree-counts}
\end{equation}
uniformly in $k$. We briefly justify the use of the fixed-size result in the present Poissonised model. Conditional on $N=m$, the graph is the fixed-size hyperbolic model with parameter
\[
C_m:=R-2\log m=-2\log\nu+2\log(n/m).
\]
On the event $|N-n|\leq n^{2/3}$, which has probability tending to one, $C_m=-2\log\nu+o(1)$. The estimates in the proof of the cited theorem are uniform for $C_m$ in a fixed compact neighbourhood of $-2\log\nu$. After shrinking $\eta$ if necessary, the estimate remains valid after conditioning and then averaging over $N$.

On the event in \eqref{eq:small-degree-counts},
\[
\sum_{v:\de(v)\leq n^\eta}\de(v)^q =O\left(n\sum_{k\geq1}k^{q-2\alpha-1}\right)=O_q(n).
\]

For the remaining degrees, summation by parts in
\eqref{eq:full-degree-tail} gives
\begin{equation}
\e\left[\sum_{v:\de(v)>n^\eta}\de(v)^q\right] =O_q\bigl(n(n^\eta)^{q-2\alpha}\bigr)=o(n).             \label{eq:subcritical-degree-tail}
\end{equation}

Markov's inequality therefore shows that the contribution above $n^\eta$ is $o(n)$ with high probability. We obtain $M_q\leq M_q^{\rm full}=O_q(n)$ with high probability, while $M_q\geq|V(\cG)|\geq cn$ with high probability.

Finally, suppose that $q>2\alpha$. Put $b_n=n^{1/(2\alpha)}$ and $\Delta_n=\max_v\de(v)$. Theorems~3.1 and~3.3 of \cite{MR5096735}, whose statements also apply to the Poissonised model, give
\begin{equation}
\Delta_n/b_n\xrightarrow{\ d\ }Z,                        \label{eq:maximum-degree-limit}
\end{equation}
where $Z$ is finite and strictly positive almost surely. They also imply that the maximum-degree vertex is the point of smallest radius with high probability. The number of points above the giant-core height $2\log R/(1-\alpha)$ is Poisson with a mean tending to infinity. Hence, with high probability there is at least one such point. The point of smallest radius then also lies in the giant core, and
\cite[Remark~20 and Theorem~16]{dieter_mixing} shows that the maximum-degree
vertex belongs to $\cG$. It follows from \eqref{eq:maximum-degree-limit} that
\[
M_q\geq\Delta_n^q\pasymp b_n^q.
\]

For the reverse bound, summation by parts in \eqref{eq:full-degree-tail} gives
\[
\e\left[\sum_{v:\de(v)\leq b_n}\de(v)^q\right] =O_q\bigl(n b_n^{q-2\alpha}\bigr)=O_q(b_n^q).
\]

Moreover, \eqref{eq:full-degree-tail} gives $\e N_{>b_n}^{\rm full}=O(1)$, so $N_{>b_n}^{\rm full}$ is bounded in probability. Equation~\eqref{eq:maximum-degree-limit} shows the same for $\Delta_n/b_n$. Thus, for every $\varepsilon>0$, there is $C_\varepsilon<\infty$ such that, outside an event of probability at most $\varepsilon+o(1)$,
\[
M_q\leq M_q^{\rm full} \leq \sum_{v:\de(v)\leq b_n}\de(v)^q +N_{>b_n}^{\rm full}\Delta_n^q \leq C_\varepsilon b_n^q.
\]

Together with the lower bound, this is $M_q\pasymp n^{q/(2\alpha)}$.
\end{proof}

The next corollary converts the moment estimates into the collision probability of the stationary distribution used in the main text.

\begin{corollary}[Stationary collision mass]
\label{cor:degree-collision}
\leavevmode For every fixed $\varphi\geq0$, define
\[
\pi_\varphi(v)=\frac{\de(v)^{1-\varphi}}{M_{1-\varphi}}.
\]

Then $\sum_v\pi_\varphi(v)^2=o(1)$ with high probability. At $\varphi=2-2\alpha$, one has
\[
\sum_v\pi_{2-2\alpha}(v)^2\whpasymp n^{-1}.
\]
\end{corollary}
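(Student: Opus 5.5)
We write $\sum_v\pi_\varphi(v)^2=M_{2-2\varphi}/M_{1-\varphi}^2$ and read both numerator and denominator off Proposition~\ref{prop:giant-degree-moments}. The denominator is always of order $n^2$: since $\varphi\geq0$ we have $1-\varphi\leq1<2\alpha$, so $M_{1-\varphi}\whpasymp n$. The numerator depends on where $q:=2-2\varphi$ sits relative to $2\alpha$, so we split into three cases.

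If $2-2\varphi<2\alpha$, the proposition gives $M_{2-2\varphi}\whpasymp n$, and the collision mass is $\Theta(n^{-1})$ with high probability. At $\varphi=2-2\alpha$ we have $q=2-2(2-2\alpha)=4\alpha-2$. This is less than $2\alpha$ exactly because $\alpha<1$, so this first case applies and yields the sharp claim $\sum_v\pi_{2-2\alpha}(v)^2\whpasymp n^{-1}$. If $q=2\alpha$, the proposition gives $M_{2\alpha}\whpasymp n\log n$, so the ratio is $O(\log n/n)=o(1)$ with high probability.

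The only delicate case is $q>2\alpha$, which occurs for small $\varphi$ including $\varphi=0$. Here the proposition only gives $M_q\pasymp n^{q/(2\alpha)}$ in probability, which is not a high-probability statement. To get a high-probability $o(1)$ bound, we avoid the in-probability estimate and use the elementary bound
\[
\sum_v\pi_\varphi(v)^2\leq\max_v\pi_\varphi(v)=\frac{d_{\max}^{1-\varphi}}{M_{1-\varphi}}.
\]
When $\varphi\leq1$, the largest-degree estimate $d_{\max}=n^{1/(2\alpha)+o(1)}$ of \cite{gugelmann2012random} holds with high probability. Together with $M_{1-\varphi}\whpasymp n$, this makes the ratio $n^{(1-\varphi)/(2\alpha)-1+o(1)}$, and the exponent is negative since $1-\varphi\leq1<2\alpha$. (For $\varphi>1$ we are automatically in the first case, because then $q<0<2\alpha$.)

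The main point needing care is exactly this case: the reason to bound by the maximum, rather than by $M_q$, is to upgrade the in-probability moment estimate to a high-probability conclusion.
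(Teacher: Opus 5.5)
Your proof is correct. For $2-2\varphi\le 2\alpha$, and for the two-sided statement at $\varphi=2-2\alpha$, it is exactly the paper's argument: write the collision mass as $M_{2-2\varphi}/M_{1-\varphi}^2$ and read both moments off Proposition~\ref{prop:giant-degree-moments}.

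You depart from the paper only in the supercritical case $2-2\varphi>2\alpha$. There the paper stays with the moment proposition: $M_{2-2\varphi}/n^{(1-\varphi)/\alpha}$ is bounded in probability and $n^{(1-\varphi)/\alpha-2}\to0$, so the ratio tends to zero in probability.

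Your reason for avoiding this route is mistaken. Convergence to zero in probability is equivalent to being $o(1)$ with high probability in the sense of Section~\ref{sec_notation}: by a diagonal argument one can choose a deterministic $\epsilon_n\to0$ with $\p(\text{ratio}>\epsilon_n)\to0$. So there was nothing to upgrade.

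Your substitute is nonetheless valid. You use $\sum_v\pi_\varphi(v)^2\le\max_v\pi_\varphi(v)\le d_{\max}^{1-\varphi}/M_{1-\varphi}$ together with the with-high-probability bound $d_{\max}\le n^{1/(2\alpha)+o(1)}$. This needs only the crude maximum-degree bound, not the full supercritical moment estimate, which in the paper combines a degree-tail summation with tightness of $\Delta_n/b_n$. The price is sharpness: you get $n^{(1-\varphi)/(2\alpha)-1+o(1)}$, which is only the square root of the true order $n^{(1-\varphi)/\alpha-2}$ that the paper's route identifies.

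Note also that the maximum bound alone already proves the $o(1)$ claim for every $\varphi\ge0$. For $\varphi\ge1$ it gives $\max_v\pi_\varphi(v)\le1/M_{1-\varphi}$, since all degrees in the giant are at least one. Your case split is therefore only needed for the sharp two-sided statement at $\varphi=2-2\alpha$.
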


\begin{proof}
We have
\[
\sum_v\pi_\varphi(v)^2 =\frac{M_{2-2\varphi}}{M_{1-\varphi}^2}.
\]

The denominator is of order $n^2$ with high probability by Proposition~\ref{prop:giant-degree-moments}. If $2-2\varphi\leq2\alpha$, the numerator is $O(n\log n)$ with high probability. If $2-2\varphi>2\alpha$, then $M_{2-2\varphi}/n^{(1-\varphi)/\alpha}$ is bounded in probability, while $(1-\varphi)/\alpha<2$. In either case, the ratio converges to zero in probability.

At $\varphi=2-2\alpha$, the numerator and denominator are $M_{4\alpha-2}$ and $M_{2\alpha-1}^2$. Since $4\alpha-2<2\alpha$ and $2\alpha-1<2\alpha$, Proposition~\ref{prop:giant-degree-moments} gives orders $n$ and $n^2$, respectively, with high probability.
\end{proof}

The first auxiliary lemma shows that a source law with exponent above the critical moment is concentrated on a polylogarithmic set near the maximal height. This localisation permits uniform estimates over all vertices carrying asymptotically relevant source mass.

\begin{lemma}[Concentration in window]
\label{lem:extreme-source-window}
Fix $0\leq\varphi<2-2\alpha$, put $p=2-\varphi$, and set
\[
\mathcal W_n:=\left\{v: \left|h(v)-\frac{R}{2\alpha}\right|\leq A_0\log\log n\right\}
\]
for a fixed $A_0>0$. Then, with high probability,
\begin{equation}
\sum_{v\notin\mathcal W_n}\de(v)^p=o(M_p), \qquad |\mathcal W_n|\leq(\log n)^{O(1)}.                       \label{eq:appendix-extreme-window}
\end{equation}

Also with high probability, uniformly for $v\in\mathcal W_n$,
\[
\de(v)=n^{1/(2\alpha)}(\log n)^{O(1)}.
\]
\end{lemma}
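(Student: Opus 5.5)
The argument works in height coordinates with the layer estimates from the proof of Proposition~\ref{prop:giant-degree-moments}. Write $p=2-\varphi\in(2\alpha,2]$, so we are in the supercritical moment regime, where $M_p\pasymp n^{p/(2\alpha)}$ is driven by the top few degrees. The plan has three steps.

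\textbf{Step 1: size of the window and degrees inside it.} The height intensity is $\asymp n e^{-\alpha h}$. Integrated over $h\in[R/(2\alpha)-A_0\log\log n,\infty)$, it gives mean $O(e^{\alpha A_0\log\log n})=(\log n)^{\alpha A_0}$, using $ne^{-\alpha R/(2\alpha)}\asymp 1$. Poisson concentration, or simply Markov's inequality with a slightly larger exponent, then gives $|\mathcal W_n|\leq(\log n)^{O(1)}$ with high probability.

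For the degrees, apply \cite[Proposition~13]{dieter_mixing} with a union bound over the $O(\log\log n)$ unit layers of the window. This is permitted by the extremely-high-probability remark at the start of the appendix, and it gives $\de(v)\asymp e^{h(v)/2}$ uniformly on $\mathcal W_n$. The exceptional heights above $R/(2\alpha)+\log R$ are excluded exactly as in the proof of Proposition~\ref{prop:giant-degree-moments}. Hence $\de(v)=n^{1/(2\alpha)}(\log n)^{O(1)}$ uniformly.

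\textbf{Step 2: mass outside the window.} Split the complement into a lower part $\{h(v)<R/(2\alpha)-A_0\log\log n\}$ and an upper part $\{h(v)>R/(2\alpha)+A_0\log\log n\}$.

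For the upper part, the expected number of points is $O((\log n)^{-\alpha A_0})\to0$, so with high probability it is empty.

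For the lower part, use Palm--Mecke exactly as in \eqref{eq:critical-palm-bound}. The full degree at height $h$ is Poisson with mean $\leq C_0e^{h/2}$, and $\e X^p=O(1+\lambda^p)$. This gives
\[
\e\sum_{v:\,h(v)<H}\de(v)^p=O\Big(n\int_0^H e^{-\alpha h}(1+e^{ph/2})\,\de h\Big)=O\big(n+ne^{(p/2-\alpha)H}\big).
\]
Set $H=R/(2\alpha)-A_0\log\log n$. Since $p>2\alpha$, this is
\[
O\big(n^{p/(2\alpha)}(\log n)^{-A_0(p/2-\alpha)}\big)+O(n).
\]
Because $p/(2\alpha)>1$, both terms are $o(n^{p/(2\alpha)})$. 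Here $A_0$ is fixed, and the $\log$ factor only needs to tend to zero, which it does since $p/2-\alpha>0$.

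\textbf{Step 3: comparison with $M_p$.} Markov's inequality shows the lower-part sum is $o(n^{p/(2\alpha)})$ with high probability. Compare this with the lower bound $M_p\geq\Delta_n^p$ and $\Delta_n/b_n\to Z>0$ in distribution from \eqref{eq:maximum-degree-limit}.

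The main obstacle is here. That lower bound on $M_p$ holds only in probability, not with high probability. However, a sum $X_n=o_{\p}(b_n^p)$ together with $b_n^p/M_p$ bounded in probability gives $X_n/M_p\to0$ in probability. So $\sum_{v\notin\mathcal W_n}\de(v)^p=o(M_p)$ holds in the sense of the paper's ``with high probability $o(\cdot)$'' with a random normalisation: choose the deterministic sequence via the tightness of $Z^{-1}$ and a diagonal argument.

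Degrees in the giant are at most full degrees, so restricting to $V(\cG)$ only helps. I expect the care needed in this last in-probability-to-whp conversion, and possibly in how strongly ``$o(M_p)$'' is meant, to be the delicate point.
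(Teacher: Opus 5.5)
Your proposal is correct and follows the paper's proof essentially step for step: the same Palm--Mecke bound of order $n^{p/(2\alpha)}(\log n)^{-A_0(p/2-\alpha)}$ for the degree mass below the window, emptiness above the window, a Poisson count giving $|\mathcal W_n|\leq(\log n)^{O(1)}$, and Proposition~13 of \cite{dieter_mixing} with a union bound for the window degrees. The step you flag as delicate is handled in the paper exactly as you suggest: Markov's inequality, combined with $M_p/n^{p/(2\alpha)}$ being bounded away from zero in probability, gives that the ratio of the outside-window sum to $M_p$ tends to zero in probability, which is equivalent to the paper's whp $o(M_p)$ (the paper additionally cites the giant-core results to place the window vertices in $\cG$).
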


\begin{proof}
Write $t_*=R/(2\alpha)$ and
\[
L_n:=\sum_{v:h(v)<t_*-A_0\log\log n}\de(v)^p.
\]

The expected number of Poisson points above height $t_*+A_0\log\log n$ is $O((\log n)^{-\alpha A_0})$, so with high probability there are no such points. Since $p>2\alpha$, Palm--Mecke and the Poisson moment bound give
\begin{equation}
\e L_n =O\left(n^{p/(2\alpha)} (\log n)^{-A_0(p/2-\alpha)}\right).                \label{eq:extreme-window-tail}
\end{equation}

Put $b_n=n^{p/(2\alpha)}$. By Proposition~\ref{prop:giant-degree-moments}, $M_p/b_n$ is bounded away from zero in probability. To make the ratio argument explicit, fix $\varepsilon,\zeta>0$ and choose $c_\varepsilon>0$ such that
\[
\limsup_{n\to\infty}\p(M_p<c_\varepsilon b_n)<\varepsilon.
\]

Then
\[
\p(L_n>\zeta M_p) \leq \p(M_p<c_\varepsilon b_n) +\frac{\e L_n}{\zeta c_\varepsilon b_n} \leq \varepsilon+o(1).
\]

Since $\varepsilon$ and $\zeta$ are arbitrary, $L_n/M_p\to0$ in probability. Together with the absence of points above the upper edge of the window, this proves the first assertion in
\eqref{eq:appendix-extreme-window}.

The number of Poisson points in the deterministic window $|h-t_*|\leq A_0\log\log n$ has mean at most a fixed power of $\log n$. A Poisson tail bound therefore gives $|\mathcal W_n|\leq(\log n)^{O(1)}$ with high probability.

Finally, the lower edge of the window is $R/(2\alpha)-O(\log\log n)$. It exceeds both the degree-comparison height $2\log R+\omega(1)$ and the giant-core height $2\log R/(1-\alpha)$ for all large $n$. Thus,
\cite[Proposition~13, Remark~20, and Theorem~16]{dieter_mixing}, together
with a union bound over the polylogarithmic window, gives
\[
\de(v)\asymp e^{h(v)/2} =n^{1/(2\alpha)}(\log n)^{O(1)}
\]
uniformly for $v\in\mathcal W_n$.
\end{proof}

The next lemma identifies the degrees at polynomial ranks inside a fixed angular sector. The main point is to pass from counts ordered by height to vertices ordered by degree. The proof treats explicitly the vertices below the range of the degree comparison.

\begin{lemma}[Degree order within a sector]
\label{lem:sector-critical-orders}
Let $\mathcal I$ be a fixed angular arc of positive length, $0<\delta<1-\alpha$, and list the vertices of $\cG$ in $\mathcal I$ in decreasing degree order as $v_1,v_2,\ldots$. Uniformly for $(\log n)^2\leq j\leq n^\delta$, with high probability,
\begin{equation}
\de(v_j)\asymp e^{h(v_j)/2}, \qquad \de(v_j)^{2\alpha}\asymp\frac nj, \qquad h(v_j)=\frac1\alpha\log\frac nj+O(1).                   \label{eq:sector-critical-orders}
\end{equation}
\end{lemma}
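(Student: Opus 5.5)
The plan is to reduce the degree-ordered statement to a height-ordered one, using height counts in the sector together with the uniform degree–height comparison. First I fix the deterministic height band
\[
\mathcal H_n:=\Bigl[\tfrac1\alpha\log\tfrac{n}{n^{\delta}}-A,\ \tfrac{R}{2\alpha}+\log R\Bigr].
\]
Its lower end is $(1-\delta)R/(2\alpha)+O(1)$, which exceeds $R/2$ by a fixed multiple of $R$ because $\delta<1-\alpha$. Hence it lies well above both the degree-comparison height $2\log R+\omega(1)$ and the giant-core height $2\log R/(1-\alpha)$.

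On this band, \cite[Proposition~13, Remark~20, Theorem~16]{dieter_mixing} apply with a union bound over polynomially many layers and vertices. They give, with high probability and uniformly, that every vertex lies in $\cG$ and satisfies $\de(v)\asymp e^{h(v)/2}$. For the counts, the Poisson number of points in $\mathcal I$ with height at least $t$ has mean $\asymp |\mathcal I| n e^{-\alpha t}$. Whenever this mean is at least $(\log n)^2/C$, a Chernoff bound together with a union bound over the $O(R)$ integer thresholds $t$ shows the count is within a constant factor of its mean, simultaneously for all such $t$. Inverting, the $j$th highest vertex in $\mathcal I$ has height $\frac1\alpha\log\frac nj+O(1)$, uniformly for $(\log n)^2\le j\le C n^\delta$. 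This is why $j_n=(\log n)^2$ is needed.

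The main obstacle is passing from height order to degree order, since $\de\asymp e^{h/2}$ holds only up to constants and vertices below the band have no degree comparison. I plan to establish two facts.
\begin{enumerate}[label=\normalfont(\alph*)]
\item Only $O(n^{\delta'})$ vertices of the whole graph have height below the band but degree at least $n^{(1-\delta)/(2\alpha)}/C$. Here $\delta'$ is any fixed exponent with $\delta'<\delta$ in the appropriate sense, and the count is made negligible against $j$ at the relevant scale. To prove this I would apply Palm–Mecke with the Poisson-degree tail: for a point at height $h$, $\p(\operatorname{Poi}(C_0e^{h/2})\geq k)$ decays as $e^{-\Omega(k)}$ once $C_0e^{h/2}\le k/2$. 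Taking $A$ large therefore makes the expected number of low-height vertices with degree above $k$, for $k$ of order $(n/j)^{1/(2\alpha)}$, at most $O(nk^{-2\alpha}e^{-\Omega(1)\cdot\text{gap}})$. This is a small constant fraction of $j$, and for $j\ge(\log n)^2$ it concentrates by Markov and Chernoff applied layerwise in $k$ (dyadic union bound).
\item Within the band, degree rank and height rank agree up to constants. The number of sector vertices with degree at least $K$ equals, up to the error in (a), the number with height at least $2\log K+O(1)$, so it is $\asymp nK^{-2\alpha}$.
\end{enumerate}

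Setting this count equal to $j$ gives $\de(v_j)^{2\alpha}\asymp n/j$, and then $h(v_j)=\frac1\alpha\log\frac nj+O(1)$ and $\de(v_j)\asymp e^{h(v_j)/2}$ follow from the band comparison. Part (b) also requires $v_j$ itself to lie in the band. This holds because the number of sector vertices with degree at least $\de(v_j)$ is $j$, which forces $\de(v_j)\gtrsim (n/j)^{1/(2\alpha)}$, and by (a) most such vertices are band vertices. If the contribution in (a) is only a constant fraction rather than negligible, the sandwich still yields the stated $\asymp$ orders, which is all the lemma claims.
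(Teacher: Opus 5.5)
Your skeleton is the paper's. Both use Poisson concentration of the sector height counts at $O(R)$ thresholds (this is where $j\ge(\log n)^2$ enters), the uniform comparison $\de(v)\asymp e^{h(v)/2}$ from \cite[Proposition~13]{dieter_mixing}, and a rank sandwich. Inverting $K\mapsto|\{v\in\mathcal I:\de(v)\ge K\}|\asymp nK^{-2\alpha}$ is equivalent to the paper's contradiction at heights $H\pm D$; to handle ties, use that at least $j$ vertices have degree at least $\de(v_j)$ and at most $j-1$ have degree at least $\de(v_j)+1$.

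The one real difference is that you apply the comparison only on a high band and treat the vertices below it separately. Step (a), as you state it, is too weak for what you then use it for. A bound of $O(n^{\delta'})$, or of ``a small constant fraction of $j$'', on low-height vertices of degree at least $n^{(1-\delta)/(2\alpha)}/C$ does not exclude that one of them has degree near $(n/(\log n)^2)^{1/(2\alpha)}$ and sits exactly at rank $j$. So ``most such vertices are band vertices'' does not put $v_j$ in the band. Your closing fallback would then give only $\de(v_j)^{2\alpha}\asymp n/j$, and neither $\de(v_j)\asymp e^{h(v_j)/2}$ nor the height asymptotics. Nor does Markov plus a layerwise Chernoff bound turn an expectation of order $j$ into a high-probability statement. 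Markov gives only constant probability, and the degrees of distinct Poisson points are not independent.

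The fix is already in your computation. Let $C=e^{L/2}/c_d$, which is fixed by the band counts and the Proposition~13 constants, and put $k:=n^{(1-\delta)/(2\alpha)}/C$. Choose $A\ge L$ so large that $C_0e^{h/2}\le k/2$ for every height $h$ below the band. The Poisson tail is then at most $e^{-ck}$ with $c=\log 2-\tfrac12$, and Palm--Mecke bounds the expected number of such vertices by $ne^{-ck}\to0$. So with high probability there is \emph{no} vertex below the band with degree at least $k$. This places every $v_j$ with $j\le n^\delta$ in the band and completes your argument.

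The paper avoids this step altogether. It applies Proposition~13 all the way down to height $\nu_0=2\log R+\omega(1)$, and notes that every vertex below $\nu_0$ has degree $(\log n)^{1+o(1)}$, far below $\de(v_j)\ge n^{(1-\delta)/(2\alpha)+o(1)}$.
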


\begin{proof}
Set
\[
t_j:=\frac1\alpha\log\frac nj,\qquad A_{\mathcal I}(t):= \left|\{v\in V(\cG):\vartheta(v)\in\mathcal I,\ h(v)\geq t\}\right|.
\]

We first establish a height-count estimate on one event, uniformly over the entire range of $j$. Let
\[
u_n=\left\lfloor\frac{R}{2\alpha} -\frac{\log R}{\alpha}-\log\log R\right\rfloor.
\]

Because $j\geq(\log n)^2$, for every fixed $L<\infty$ one has $t_j+L\leq u_n$ for all large $n$. Because $j\leq n^\delta$ and $\delta<1-\alpha$, there is a fixed $\varepsilon_0>0$ such that $t_j\geq(1/2+\varepsilon_0)R$ uniformly in $j$.

Apply the Poisson concentration estimate in
\cite[Lemma~12]{dieter_mixing} to the intersections of $\mathcal I$ with
the unit-height layers appearing in
\cite[Proposition~14(i)]{dieter_mixing}. All these layers are in the giant
core. A union bound over the $O(R)$ relevant layers gives fixed constants $0<c_A<C_A<\infty$ such that, with high probability, for every fixed $L<\infty$ and every admissible $j$,
\begin{equation}
c_A j e^{\mp\alpha L} \leq A_{\mathcal I}(t_j\pm L) \leq C_A j e^{\mp\alpha L}.                             \label{eq:sector-height-counts}
\end{equation}

Here points above $u_n$ cause no problem: their total number is $R^{1+o(1)}=o((\log n)^2)$ with high probability, so they can be absorbed into the constants uniformly over the stated range.

On the same event, \cite[Proposition~13]{dieter_mixing} gives fixed $0<c_d<C_d<\infty$ such that
\begin{equation}
c_de^{h(v)/2}\leq\de(v)\leq C_de^{h(v)/2}                \label{eq:sector-degree-height}
\end{equation}
for every vertex with height at least $\nu_0:=2\log R+\omega(1)$. This is uniform because Proposition~13 holds with extremely high probability and there are only polynomially many vertices. Vertices with height below $\nu_0$ satisfy
\begin{equation}
\de(v)\leq(\log n)^{1+o(1)}                              \label{eq:boundary-polylog-degree}
\end{equation}
uniformly, again by Proposition~13 and a union bound.

Choose fixed constants $D,L$ in this order. First take $D>0$ so large that
\[
C_de^{-D/2}<c_d.
\]
Then take $L>D$ so large that
\[
C_Ae^{-\alpha(L-D)}<1,\qquad c_Ae^{\alpha(L-D)}>1.
\]

Equation~\eqref{eq:sector-height-counts} then gives, uniformly in $j$,
\begin{equation}
A_{\mathcal I}(t_j+L-D)<j,\qquad A_{\mathcal I}(t_j-L+D)>j.                              \label{eq:sector-count-separation}
\end{equation}

The same estimate at $t_j-L$ gives at least $j$ vertices of height at least $t_j-L$. By \eqref{eq:sector-degree-height}, all of them have degree at least $c_de^{(t_j-L)/2}$. Hence,
\begin{equation}
\de(v_j)\geq c_de^{(t_j-L)/2} \geq n^{(1-\delta)/(2\alpha)+o(1)}.                     \label{eq:vj-polynomial-degree}
\end{equation}

In particular, \eqref{eq:boundary-polylog-degree} implies that $h(v_j)\geq\nu_0$ uniformly in $j$. Thus,
\eqref{eq:sector-degree-height} applies to $v_j$ itself. Put
$H:=h(v_j)$.

We now compare all other vertices with $v_j$. If a vertex $x$ satisfies $h(x)\leq H-D$ and $h(x)\geq\nu_0$, then
\[
\de(x)\leq C_de^{(H-D)/2} <c_de^{H/2}\leq\de(v_j).
\]

If instead $h(x)<\nu_0$, then
\eqref{eq:boundary-polylog-degree} and
\eqref{eq:vj-polynomial-degree} give the same strict inequality for all
large $n$. Therefore, every vertex of degree at least $\de(v_j)$ has height larger than $H-D$. Since there are at least $j$ vertices with degree at least $\de(v_j)$,
\begin{equation}
j\leq A_{\mathcal I}(H-D).                              \label{eq:rank-lower-count}
\end{equation}

Similarly, if $h(x)\geq H+D$, then
\[
\de(x)\geq c_de^{(H+D)/2} >C_de^{H/2}\geq\de(v_j).
\]
There can be at most $j-1$ vertices with degree strictly larger than $\de(v_j)$, and hence
\begin{equation}
A_{\mathcal I}(H+D)<j.                                  \label{eq:rank-upper-count}
\end{equation}

If $H>t_j+L$, then $A_{\mathcal I}(H-D)\leq A_{\mathcal I}(t_j+L-D)<j$ by
\eqref{eq:sector-count-separation}, contradicting
\eqref{eq:rank-lower-count}. If $H<t_j-L$, then
$A_{\mathcal I}(H+D)\geq A_{\mathcal I}(t_j-L+D)>j$, contradicting \eqref{eq:rank-upper-count}. Thus,
\[
h(v_j)=t_j+O(1)
\]
uniformly in $j$. Equation~\eqref{eq:sector-degree-height} now gives
\[
\de(v_j)\asymp e^{t_j/2}=(n/j)^{1/(2\alpha)}, \qquad \de(v_j)^{2\alpha}\asymp n/j,
\]
which proves \eqref{eq:sector-critical-orders}.
\end{proof}

The following uniform comparison is used when the main argument ranges over all vertices above a fixed fraction of the radial scale. Its weaker exponential form is sufficient there.

\begin{lemma}[Uniform comparison of degree and height]
\label{lem:uniform-degree-height}
For every fixed $\gamma>0$, with high probability, uniformly over all vertices with $h(v)\geq\gamma R$,
\begin{equation}
\de(v)=e^{h(v)/2+o(R)}.                                 \label{eq:uniform-degree-height}
\end{equation}
\end{lemma}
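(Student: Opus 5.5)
We will prove the two directions of \eqref{eq:uniform-degree-height} separately, working on one high-probability event that covers all vertices with $h(v)\geq\gamma R$ at once.

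\textbf{Vertices below the giant-core scale of the degree comparison.} For heights $h\geq\nu_0=2\log R+\omega(1)$, the degree comparison \cite[Proposition~13]{dieter_mixing} holds with extremely high probability. A union bound over the polynomially many Poisson points, or equivalently Mecke's formula, then gives $c_d e^{h(v)/2}\leq\de(v)\leq C_d e^{h(v)/2}$ simultaneously for all such $v$. This is exactly the estimate \eqref{eq:sector-degree-height} already used in Lemma~\ref{lem:sector-critical-orders}. Since $\gamma R\gg\nu_0$, every vertex in our range is covered, and we even get $\de(v)=e^{h(v)/2+O(1)}$ there.

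\textbf{Vertices near the top.} The constants of Proposition~13 may not be uniform near the top, where $h(v)\approx R/(2\alpha)$ and the layer counts become small. There I would instead argue directly by Palm--Mecke. Under the Palm law at height $h$, the degree is Poisson with mean $\mu(h)\asymp e^{h/2}$ uniformly for $h\in[\gamma R,R]$. Both constants come from integrating the intensity of the ball of radius $R$ around the point, as in the standard computation of \cite{gugelmann2012random}. A Chernoff bound then gives
\[
\p\bigl(|\operatorname{Poi}(\mu)-\mu|>\mu/2\bigr)\leq e^{-c\mu}\leq\exp\{-cn^{\gamma}\}.
\]
Mecke's formula bounds the expected number of violating points with $h\geq\gamma R$ by $n\exp\{-cn^\gamma\}\to0$, so whp every such vertex has $\de(v)\asymp e^{h(v)/2}$. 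A separate estimate gives that whp no point has $h>R/(2\alpha)+\log R$.

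Either route yields the stronger statement $\de(v)=e^{h(v)/2+O(1)}$, which implies the claimed $o(R)$ form. A final point concerns the degree: it is taken in $\cG$, but for vertices of height at least $\gamma R$ the whole neighbourhood lies in the giant whp by \cite[Remark~20 and Theorem~16]{dieter_mixing}, so degree in $\cG$ equals the full degree.

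The main obstacle I expect is verifying that the Palm mean satisfies $\mu(h)\asymp e^{h/2}$ with constants uniform in $h$ up to $R$, including the regime where the neighbourhood covers the disk centre. If this fails, I would fall back on the weaker claim $\mu(h)=e^{h/2+O(1)}$ for $h\leq R-O(1)$, which is all that \eqref{eq:uniform-degree-height} needs.
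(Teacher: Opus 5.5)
Your first route is essentially the paper's proof: Proposition~13 of \cite{dieter_mixing}, a union bound over all vertices above the threshold $2\log R+\omega(1)\ll\gamma R$, and giant-core membership (a component is closed under adjacency, so giant and full degrees coincide). Your Palm--Mecke/Chernoff fallback is also correct and self-contained: the Palm mean is $\asymp e^{h/2}$ uniformly for all $h\in[\gamma R,R]$, so it needs neither Proposition~13 nor the extra cap at $R/(2\alpha)+\log R$.
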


\begin{proof}
For all large $n$, $\gamma R$ exceeds both the degree-comparison height $2\log R+\omega(1)$ in \cite[Proposition~13]{dieter_mixing} and the giant-core height $2\log R/(1-\alpha)$ from
\cite[Remark~20 and Theorem~16]{dieter_mixing}. Proposition~13 holds with
extremely high probability, so a union bound over all $O(n)$ vertices gives fixed constants $0<c<C<\infty$ such that, simultaneously for every $h(v)\geq\gamma R$,
\[
ce^{h(v)/2}\leq\de(v)\leq Ce^{h(v)/2}.
\]

All such vertices belong to the giant, so their full and giant degrees coincide. Since fixed multiplicative constants are $e^{o(R)}$, the claimed form follows.
\end{proof}

The final lemma shows that vertices below a small fixed fraction of the height scale carry only a proportional part of the critical moment. This estimate allows the critical source law to discard the low-height region at a controlled cost.

\begin{lemma}[Critical moment below a fixed height]
\label{lem:truncated-critical-moment}
There is a constant $C<\infty$ such that, for every sufficiently small fixed $\gamma>0$, with high probability,
\begin{equation}
\frac1{M_{2\alpha}} \sum_{v:h(v)<\gamma R}\de(v)^{2\alpha} \leq C\gamma.                                           \label{eq:truncated-critical-moment}
\end{equation}
\end{lemma}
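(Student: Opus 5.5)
The plan is to reuse the Palm--Mecke computation \eqref{eq:critical-palm-bound} from the proof of Proposition~\ref{prop:giant-degree-moments} for the numerator, and the lower bound \eqref{eq:critical-lower} (equivalently \eqref{eq:critical-degree-moment}) for the denominator. Put
\[
S_\gamma:=\sum_{v:h(v)<\gamma R}\de(v)^{2\alpha}.
\]
Since the giant-component degree is at most the full degree, $S_\gamma$ is bounded by the corresponding full-graph sum.

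Applying \eqref{eq:critical-palm-bound} with $A=[0,\gamma R)$ gives
\[
\e S_\gamma=O\bigl(n(1+\gamma R)\bigr).
\]
Here the implicit constant $C_1$ comes only from the Poisson moment bound $\e X^{2\alpha}=O(1+\lambda^{2\alpha})$ and the height intensity, so it does not depend on $\gamma$. The denominator satisfies $M_{2\alpha}\geq c_0nR$ with high probability.

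Markov's inequality alone would only give the bound with probability bounded away from one, not with high probability, so I need concentration. I would split $S_\gamma$ into two pieces.

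\emph{Low layers.} The heights in $[0,\ell_n)$ contribute $o(nR)$ with high probability; this was already shown in the proof of Proposition~\ref{prop:giant-degree-moments}.

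\emph{Layers $[\ell_n,\gamma R)$.} Here I use the unit-layer estimates from that same proof. With high probability every layer $m\leq u_n$ contains $\asymp ne^{-\alpha m}$ points, each of degree $\asymp e^{m/2}$. Each layer therefore contributes at most $C_2 n$, with $C_2$ independent of $\gamma$. Summing over at most $\gamma R+1$ layers gives at most $C_2 n(\gamma R+1)$. This requires $\gamma R\leq u_n$, which holds once $\gamma<1/(2\alpha)$; this is where "sufficiently small $\gamma$" is used.

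Adding the two pieces and dividing by $c_0 nR$ yields
\[
\frac{S_\gamma}{M_{2\alpha}}\leq \frac{C_2}{c_0}\gamma+o(1),
\]
so the claim holds with $C=2C_2/c_0$.

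The main obstacle is the uniformity of the constants. The layer-count and degree comparison constants from \cite[Propositions~13 and 14]{dieter_mixing} have to hold simultaneously over all $O(R)$ layers, and they must not depend on $\gamma$. This is the same union bound already invoked in the appendix, and since those events do not involve $\gamma$ at all, it should carry over directly.
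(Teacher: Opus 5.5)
Your proposal is correct and matches the paper's proof. The paper makes the same split: it bounds heights below $\ell_n$ by the Palm--Mecke estimate plus Markov's inequality (giving $o(nR)$, hence at most $\gamma nR$, with high probability); it controls the layers $\ell_n\leq h<\gamma R$ by the $\gamma$-independent unit-layer event from Proposition~\ref{prop:giant-degree-moments}, with each of at most $\gamma R+1$ layers contributing $O(n)$; and it divides by $M_{2\alpha}=\Omega(nR)$.
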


\begin{proof}
Put
\[
\ell_n=\left\lceil\max\left\{2\log R+\log\log R, \frac{2\log R}{1-\alpha}\right\}\right\rceil.
\]

By \eqref{eq:critical-palm-bound},
\[
\e\left[\sum_{v:h(v)<\ell_n}\de(v)^{2\alpha}\right] =O(n\log R)=o(nR).
\]

For every fixed $\gamma>0$, Markov's inequality therefore makes this contribution at most $\gamma nR$ with probability tending to one.

For $\ell_n\leq h(v)<\gamma R$, use the same unit-height layer event as in the proof of Proposition~\ref{prop:giant-degree-moments}. Each such layer contains $O(ne^{-\alpha m})$ vertices, and every vertex in the layer has degree $O(e^{m/2})$. Hence, each layer contributes $O(n)$ to the $2\alpha$-moment. There are at most $\gamma R+1$ such layers, so
\[
\sum_{v:\ell_n\leq h(v)<\gamma R}\de(v)^{2\alpha} =O(\gamma nR)
\]
with high probability.  The implicit constants here are independent of $\gamma$. Finally, \eqref{eq:critical-degree-moment} gives $M_{2\alpha}=\Omega(nR)$ with high probability. Combining the three estimates makes the ratio in \eqref{eq:truncated-critical-moment} $O(\gamma)$ with a $\gamma$-independent implicit constant, so one fixed choice of the constant in the statement proves the result.
\end{proof}

\end{document}